\newtheorem{theorem}{Theorem}[section]
\newtheorem{lemma}[theorem]{Lemma}
\newtheorem{proposition}[theorem]{Proposition}
\newtheorem{corollary}[theorem]{Corollary}
\theoremstyle{definition}
\newtheorem{definition}[theorem]{Definition}
\newtheorem{example}[theorem]{Example}
\theoremstyle{remark}
\newtheorem{remark}[theorem]{Remark}
\numberwithin{equation}{section}
\newcommand{\kr}{\textrm{ker}}
\renewcommand{\dim}{\mathrm{dim}}
\newcommand{\im}{\mathrm{Im}}
\newcommand{\rank}{\mathrm{rank}}
\newcommand{\supp}{\mathrm{supp}}
\newcommand{\ext}{\mathrm{e}}
\newcommand{\bis}{\mathrm{b}}
\newcommand{\lin}{\textit{l}}
\newcommand{\Reach}{\mathrm{Reach}}
\newcommand{\Obs}{\mathrm{Obs}}
\newcommand{\diag}{\mathrm{diag}}
\newcommand{\col}{\mathrm{col}}
\newcommand{\row}{\mathrm{row}}
\newcommand{\Gcal}{\mathcal{G}}
\newcommand{\Ical}{\mathcal{I}}
\newcommand{\Qcal}{\mathcal{Q}}
\newcommand{\Rcal}{\mathcal{R}}
\newcommand{\Scal}{\mathcal{S}}
\newcommand{\Ucal}{\mathcal{U}}
\def\Real{\mathbb{R}}
\def\Compl{\mathbb{C}}
\def\xb{\bm{x}}  \def\yb{\bm{y}} \def\ub{\bm{u}} \def\wb{\bm{w}} 
\def\wbtilde{\bm{\tilde{w}}}  
\def\mub{\boldsymbol{\mu}} \def\nub{\boldsymbol{\nu}} 
\def\zerob{\mathbf{0}}
\def\Sigmatilde{{\widetilde{\Sigma}}} 
\def\Atilde{{\widetilde{A}}}  \def\Btilde{{\widetilde{B}}}
\def\Ctilde{{\widetilde{C}}}  
\def\ntilde{{\tilde{n}}}
\def\cov{\textrm{cov}}
\def\Ubar{\,\overline{\!U}}
\begin{document}

\begin{abstract}                          
In this paper we propose definitions of equivalence via stochastic bisimulation and of equivalence of stochastic external behavior
for the class of discrete--time stochastic linear control systems with possibly degenerate normally distributed disturbances. The first notion is inspired by the notion of probabilistic bisimulation for probabilistic chains while the second one by the notion of equivalence of external behavior for (nonstochastic) behavioral systems. 
Geometric necessary and sufficient conditions for checking these notions are derived. Model reduction via Kalman--like decomposition is also proposed. Connections with stochastic linear realization theory and stochastic reachability are established. 
\end{abstract}

\title[Equivalence Notions for Discrete--Time Stochastic Linear Control Systems]{Equivalence Notions for Discrete--Time \\Stochastic Linear Control Systems}

\thanks{The research leading to these results has been partially supported by the Center of Excellence DEWS.}

\author[Giordano Pola, Costanzo Manes, Arjan J. van der Schaft and Maria Domenica Di Benedetto
]{Giordano Pola, Costanzo Manes, Arjan J. van der Schaft and Maria Domenica Di Benedetto}
\address{$^1$Department of Information Engineering, Computer Science and Mathematics, Center of Excellence DEWS,
University of L{'}Aquila, 67100 L{'}Aquila, Italy}
\email{
\{giordano.pola,costanzo.manes,mariadomenica.dibenedetto\}@univaq.it}
\address{$^2$Johann Bernoulli Institute for Mathematics and Computer Science, University of Groningen, P.O. Box 407, 9700 AK Groningen, The Netherlands}
\email{a.j.van.der.schaft@rug.nl.}

\maketitle

\section{Introduction} \label{sec1}

A theme widely studied in the community of computer science is the characterization of equivalent models of computation. Several equivalence notions have been proposed in the literature, see e.g. \cite{Spectrum} and the references therein. Among these notions, bisimulation \cite{Milner,Park} and trace equivalences play a prominent role. 
As discussed in \cite{Spectrum}, trace equivalence is the weakest equivalence notion, while bisimulation equivalence is the strongest one, apart from the notion of equivalence via isomorphism. 
Bisimulation equivalence is extensively used in the community of computer science as an effective tool to mitigate software verification. In the last thirty years, many researchers in the control systems and computer science communities were attracted by this research topic with the aim of reducing the complexity of real--world complex systems for formal verification/analysis and control design purposes. The research in this field is very broad and can be roughly categorized along the following directions:
\begin{itemize}
\item (D) Type of dynamics considered: deterministic/non--deterministic (D1), versus, stochastic (D2);
\item (R) Type of reduction obtained: reduction of a finite states model to a smaller finite states model (R1), versus, reduction of a continuous/hybrid (infinite states) model to a finite states model (R2), versus, reduction of a continuous/hybrid (infinite states) model to a smaller (with lower dimensional state space) continuous/hybrid (infinite states) model (R3);
\item (E) Type of equivalence notions employed: exact simulation/bisimulation/trace equivalence notions (E1), versus, approximate simulation/bisimulation/trace equivalence notions (E2).
\end{itemize}

A (non--exhaustive) list of literature relevant in this research topic is reported in Table \ref{Tab}. The present paper is along the research line (D2)--(R3)--(E1) and aims at extending the theory of bisimulation and external behavior equivalences given for non--deterministic control systems in \cite{BisimSchaft} to a stochastic setting. As discussed later on in the paper, the proposed notions have been inspired by the corresponding notions given in the finite systems domain (D2)--(R1)--(E1) and for behavioral systems \cite{Willems}. We briefly recall that within (D2)--(R1)--(E1), bisimulation equivalence for probabilistic chains has been introduced in \cite{Larsen:91}; a generalization of this notion to Labelled Markov Processes has been studied in \cite{Desharnais2004}, and to Interactive Markov Chains, mixing transitions due to interaction with spontaneous probabilistic transitions, in \cite{HermannsBook}. 
Within the research line (D2)--(R3)--(E1) where the present paper is placed, a notion of bisimulation for general stochastic hybrid systems (with no inputs and outputs) has been proposed in \cite{BujorianuHSCC05} and for communicating piecewise deterministic markov processes in \cite{StrubbeHSCC05,StrubbeHybridge}. However, given the generality of the models considered in \cite{BujorianuHSCC05,StrubbeHSCC05,StrubbeHybridge}, checkable conditions for verifying bisimulation equivalence are difficult to find. 
For this reason in this paper we consider a simpler class of stochastic control systems and propose equivalence notions that can be effectively checked. \\
We consider the class of discrete--time stochastic linear control systems with possibly degenerate normally distributed disturbances and propose the notions of equivalence via stochastic bisimulation and equivalence of stochastic external behavior. 
Comparisons of the first notion with the ones proposed in \cite{Larsen:91,Desharnais2004, BujorianuHSCC05,StrubbeHSCC05,StrubbeHybridge} are  discussed in the paper. 
The first notion is formally proven to imply the latter, while the converse implication is shown to be not true by means of a counterexample. 
Necessary and sufficient conditions to check this notion in terms of geometric control theory are derived and model reduction discussed. 
The concept of stochastic reachability, see e.g. \cite{BujorianuHSCC2003, AbateAut, PolaMTNS2006}, is related to the notion of stochastic bisimulation. The proposed notion of equivalence via stochastic bisimulation preserves stochastic reachability properties of the systems involved. This is important because, as outlined in the paper, control strategies designed to solve some stochastic reachability--based specifications can be readily transferred between systems that are equivalent via stochastic bisimulation. This result extends well known facts for (finite states) concurrent processes, see e.g. \cite{ModelChecking}, to stochastic systems with infinite number of states. 
Connections with stochastic realization theory, 
see \cite{Kalman1,Faurre} and also \cite{shaftrealization}, are also established. 
A preliminary version of this paper appeared in the conference publication \cite{PolaCDC2015}. 
The present paper extends the work \cite{PolaCDC2015} by introducing novel results on model reduction and on connections with stochastic realization theory. Finally, it also includes proofs of all the results. The problem addressed in this paper has been recently investigated in the continuous--time domain in \cite{PolaCDC16b,PolaCDC16a}.
\\
The paper is organized as follows. In Section \ref{sec2} we introduce the notation employed and recall preliminary definitions. In Section \ref{sec3} we present the notions of equivalence via stochastic bisimulation and of equivalence of stochastic external behavior; connections with stochastic reachability are also discussed. In Section \ref{sec4} we provide geometric conditions for checking the proposed notions. Model reduction is discussed in Section \ref{sec5}. 
In Section \ref{sec7} we discuss connections with notions of bisimulation for probabilistic chains and labelled Markov processes 
and with stochastic linear realization theory. Finally, Section \ref{sec8} offers some concluding remarks and outlook.

\begin{table}
\begin{center}
\begin{tabular}
[c]{l|l|l}\hline
						& (D1) 																														& (D2) 															\\\hline
(R1)--(E1)  & \cite{ModelChecking} 																						& \cite{Larsen:91,HermannsBook}	\\\hline
(R1)--(E2)	& \cite{Caspi2002,DiPierro2003,DeAlvaro2004}											& \cite{Desharnais2004,Breugel03anintrinsic} \\\hline
(R2)--(E1)	& \cite{paulo,DiscAbs,Lee2012,BeltaTACTN2010,Belta2012,junge1,gunther}	& \cite{Lunze2001,LunzeBook}				\\\hline
(R2)--(E2)	& \cite{paulo,GirardEJC11,DiBenede:HAS2013,PolaPWA} 							& \cite{ZamaniSCL2014,ZamaniTAC2014,AbateTAC2011}							\\\hline
(R3)--(E1)	& \cite{BisimSchaft,HTP02,PolaIJC06} 															& \cite{BujorianuHSCC05,StrubbeHSCC05,StrubbeHybridge,Blute:97} 							\\\hline
(R3)--(E2)	& \cite{AB-TAC07}																									& \cite{JuliusTAC2009}		\\\hline
\end{tabular}
\caption{Related literature on equivalences based reduction of discrete/continuous/hybrid deterministic/non--deterministic/stochastic systems.}
\label{Tab}
\end{center}
\end{table}

\section{Notation and preliminary definitions} \label{sec2}

Given a pair of sets $S_1$ and $S_2$ and a relation $\Rcal \subseteq S_1 \times S_2$, we define 
for any sets $X_1 \subseteq S_1$ and $X_2 \subseteq S_2$, 
$\Rcal(X_1)=\{x_2\in S_2 | \exists x_1 \in X_1 \text{ s.t. } (x_1,x_2)\in \Rcal \}$ and 
$\Rcal^{-1}(X_2)=\{x_1\in S_1 | \exists x_2 \in X_2 \text{ s.t. } (x_1,x_2)\in \Rcal \}$. 
Relation $\Rcal$ is total if $\Rcal(S_1)=S_2$ and $\Rcal^{-1}(S_2)=S_1$. 
The standard symbols $\mathbb{N}$, $\Real$ and $\Real^+$ denote the sets of nonnegative integer, real, and positive real numbers, respectively. 
Given a vector $x\in\Real^n$, the symbol $x[i]$  
denotes the $i$--th component of $x$.   
Given a matrix $M\in\Real^{m\times n}$, the symbols $M^T$, $\rank(M)$, $\im(M)$ and $\ker(M)$ denote the transpose, the rank, the image and the kernel of $M$, respectively.
If $M$ is square, $\det(M)$ denotes the determinant of $M$. 
Given a subset $X$ of $\Real^n$ we denote by $MX$ the image of $X$ through $M$, i.e. the set $\{y\in\Real^m|\exists x\in X \text{ s.t. } y=Mx\}$. 
The symbols $I_{n}$ and $0_{n\times m}$ denote the $(n,n)$--identity matrix and the $(n,m)$--null matrix, respectively; the symbol $0_n$ denotes the null vector in $\Real^n$. 
Given a collection of square matrices $M_{1},M_{2},...,M_{N}$, we denote by $\diag(M_{1},M_{2},...,M_{N})$ the block--diagonal matrix with block--entries $M_i$. 
The symbol $\oplus$ denotes the direct sum operator between subspaces. 
Given $f_i:\mathbb{N}\rightarrow \Real^n$, $i=1,2$, we write $f_1 = f_2$ instead of $f_1(t)=f_2(t)$ for all $t\in\mathbb{N}$ and also 
$f = \mathbf{0}$ instead of $f(t)=0$ for all $t\in\mathbb{N}$.
\\
Consider a probability space $(\Omega,\mathcal{F},\mathbf{P})$. 
$\mathbf{P}(S_1 | S_2)$ denotes the conditional probability of event $S_1$ given event $S_2$. 
Given a random variable $\xb:\Omega\rightarrow \Real^n$ and a measurable set $X\subseteq \Real^n$, we use standard shorthand notation $\mathbf{P}(\xb\in X)$ instead of $\mathbf{P}(\{\omega\in\Omega:\xb(\omega)\in X\})$; we denote by $\supp(\xb)$ the support of $\xb$; we recall that:
\[
\supp(\xb)=\{z\in\Real^n | \mathbf{P}(\xb \in \mathcal{B}_{\rho}(z))>0, \text{ for any } \rho\in\Real^+\},
\]
where $\mathcal{B}_{\rho}(z)=\{z\in\Real^n|\Vert z \Vert<\rho\}$. 
In this paper we consider random variables whose support is a manifold. 
Random variable $\xb$ is degenerate if $\dim(\supp(\xb))<n$ and non--degenerate, otherwise. 
Of course 
\begin{equation}
\label{supporto}
\mathbf{P}(\xb\in X)=\mathbf{P}(\xb\in X \cap \supp(\xb)).
\end{equation}
In general, the random variables considered in this paper are degenerate, and therefore
do not admit a probability density function.
$E(\xb)$ denotes the expected value of the r.v.\ $\xb$, and 
$\cov(\xb,\yb)$ denotes the covariance between two random vectors $\xb$ and $\yb$, i.e.\
$\cov(\xb,\yb)=E\big[\big(\xb-E(\xb)\big)\big(\yb-E(\yb)\big)^T\big]$.

The following standard definition will be used in this paper:  
\begin{definition}
Two stochastic processes $\xb_1:\mathbb{N}\times \Omega\rightarrow \Real^n$ and $\xb_2:\mathbb{N}\times \Omega\rightarrow \Real^n$ are stochastically  equivalent, denoted 
$
\xb_1 \sim \xb_2
$, 
if the probability distribution of the two vectors $(\xb_1(t_1),\xb_1(t_2),...,\xb_1(t_k))$ and $(\xb_2(t_1),\xb_2(t_2),...,\xb_2(t_k))$ is equal for all choices of times $t_1,t_2,...,t_k \in\mathbb{N}$. 
\end{definition}

The standard notation $\xb \sim \mathcal{N}(\mu,\Psi)$ indicates that $\xb$ is a random variable with normal distribution with mean vector $\mu$ and covariance matrix $\Psi$; we recall that $\Psi$ is symmetric and positive semi-definite, $\supp(\xb)=\mu+\im(\Psi)$ and $\xb$ is degenerate if $\det(\Psi)=0$ and non--degenerate, otherwise. 
Moreover, we recall that if $\xb \sim \mathcal{N}(\mu,\Psi)$ then $\mathbf{y}=\alpha \xb + \beta \sim \mathcal{N}(\beta+\alpha\mu,\alpha\Psi \alpha^T)$.

\section{Equivalence notions} \label{sec3}

In this section we propose the notions of equivalence of stochastic external behavior and equivalence via stochastic bisimulation for a pair of discrete--time stochastic linear control systems $\Sigma_1$ and $\Sigma_2$
described, for $t\in\mathbb{N}$, by:
\begin{equation}
\label{eq:systemSi}
\Sigma_i:
\left\{
\begin{aligned}
x_i(t+1) & = A_i x_i(t) + B_i u_i(t) + G_i w_i(t) ,\\
y_i (t)  & = C_i x_i(t) + \nu_i(t),\\
x_i \in & \Real^{n_i}, u_i \in \Real^{m}, w_i \in \Real^{l_i}, y_i,\nu_i \in \Real^{p},
\end{aligned}
\right.
\end{equation}
where $x_i$ is the state, $u_i$ is the control input, $y_i$ is the output, and
$w_i$ and $\nu_i$ are random disturbances.
We assume that $\nu_i(t) \sim \mathcal{N}(0, \Psi_i )$, with $\Psi_i\in \Real^{p\times p}$,
and $w_i(t) \sim \mathcal{N}(\mu_i, W_i )$ with $\mu_i \in \Real^{l_i}$ and $W_i\in \Real^{l_i\times l_i}$. 
  We also assume that both sequences $w_i(t)$ and $\nu_i(t)$ are white and mutually independent.
  Without loss of generality we assume in the sequel that $W_i=I_{l_i}$, so that the resulting random vector
$v_i=G_i w_i$ is $v_i(t)\sim \mathcal{N}(G_i\mu_i, G_iG_i^T )$.
Note that $v_i(t)$ is degenerate if and only if $\rank(G_i)< n_i$.
In the following, the boldface symbols $\ub_i$, $\wb_i$ and $\nub_i$ 
will be used to denote the whole sequences of deterministic inputs $u_i(t)$ and random noises $w_i(t)$ and $\nu_i(t)$, $t\ge 0$, i.e. 
$\ub_i:\mathbb{N}\rightarrow \Real^m$, 
$\wb_i:\mathbb{N}\times \Omega \rightarrow \Real^{l_i}$, and
$\nub_i:\mathbb{N}\times \Omega \rightarrow \Real^{p}$. 
The state and output values of the system $\Sigma_i$ 
at times $t\in\mathbb{N}$ are computed as
\begin{align}
& \xb_i(t,x^0_i,\ub_i,\wb_i) 
 = A_i^t x_i^0 + \sum_{\tau=0}^{t-1} A_i^{t-1-\tau} ( B_i u_i(\tau)+ G_i w_i (\tau) ), \label{TFs}\\
& \yb_i(t,x^0_i,\ub_i,\wb_i,\nub_i) =C_i \xb_i(t,x^0_i,\ub_i,\wb_i) + \nu_i(t)  \notag\\
& = C_i A_i^t x_i^0  +\! \sum_{\tau=0}^{t-1} C_i A_i^{t-1-\tau}\!( B_i u_i(\tau)+ G_i w_i (\tau) ) + \nu_i(t). 
\end{align}
For a given initial condition $x^0_i \in\Real^{n_i}$ and deterministic
input $\ub_i$ let us denote by 
$\xb_i\vert_{x_i^0,\ub_i}: \mathbb{N}\times \Omega \rightarrow \Real^{n_i}$
and 
$\yb_i\vert_{x_i^0,\ub_i}: \mathbb{N}\times \Omega \rightarrow \Real^{p}$
the state and output stochastic processes generated by the system $\Sigma_i$
driven by the stochastic sequences $\wb_i$ and $\nub_i$, i.e.
\begin{align}
\xb_i\vert_{x_i^0,\ub_i}(t)& =\xb_i(t,x^0_i,\ub_i,\wb_i), \\
\yb_i\vert_{x_i^0,\ub_i}(t)& =\yb_i(t,x^0_i,\ub_i,\wb_i,\nub_i).
\end{align}
For later purposes let us define the sequences $\mub_i=E(\wb_i)$ of expected values 
of the random sequences $\wb_i$, $i=1,2$, 
and the sequence $\wbtilde_i=\wb_i-\mub_i$, that is the centered (i.e.\ zero mean) version of the disturbance $\wb_i$.
Then, by linearity
\begin{align}
\xb_i(t,x^0_i,\ub_i,\wb_i) & = \xb_i(t,x^0_i,\ub_i,\mub_i) + \xb_i(t,0,\mathbf{0},\wbtilde_i) \label{eq:xlin1} \\
\yb_i(t,x^0_i,\ub_i,\wb_i,\nub_i) & = \yb_i(t,x^0_i,\ub_i,\mub_i,\zerob) + \yb_i(t,0,\mathbf{0},\wbtilde_i,\nub_i) \label{eq:ylin1}
\end{align}

Defining for any time $t\in\mathbb{N}$ and $i=1,2$ the zero mean vectors
\begin{equation}
\label{Wvector}
\wbtilde_i\vert_{0:t-1}= 
\begin{bmatrix} 
\wb_i(t-1) - \mu_i\\
\vdots\\
\wb_i(1) - \mu_i\\
\wb_i(0) - \mu_i
\end{bmatrix}
\end{equation}
and the matrices
\begin{align} 
\Reach_t (A_i,G_i) & =\begin{bmatrix}
G_i & A_i G_i & ... & A_i^{t-1} G_i \end{bmatrix},\label{eq:reachdef}\\ 
\Obs_t (A_i,C_i) & = \big(\Reach_t (A_i^T,C_i^T) \big)^T, \label{eq:obsdef}
\end{align}
we can rewrite \eqref{eq:xlin1} and \eqref{eq:ylin1} as 
\begin{align}
& \begin{aligned}
\xb_i\vert_{x_i^0,\ub_i}(t) = & \,\xb_i(t,x^0_i,\ub_i,\mub_i) \\
						& + \Reach_t (A_i,G_i)\wbtilde_i\vert_{0:t-1} ,
\end{aligned}\label{eq:xlin2} \\
& \begin{aligned}
\yb_i\vert_{x_i^0,\ub_i}(t) = & \, \yb_i(t,x^0_i,\ub_i,\mub_i,\nub_i) \\
				& + C_i \Reach_t (A_i,G_i)\wbtilde_i\vert_{0:t-1} + \nu_i(t).
\end{aligned}\label{eq:ylin2}
\end{align}
Note that $C_i \Reach_t (A_i,G_i)=\Obs_t(A_i,C_i)\,G_i$.\\
If the initial state $x_i^0$ in $\Sigma_i$ is considered as deterministic, 
then the first terms of the right hand sides of equations  \eqref{eq:xlin2} and \eqref{eq:ylin2}
(or \eqref{eq:xlin1} and \eqref{eq:ylin1})
are the expected values of the state and output processes at time $t$.
    However, for the sake of generality, we look at these terms
as expectations conditional to $x_i^0$, considered as a random variable 
independent of both $\wb_i$ and $\nub_i$.
    Explicit expressions of the these terms are:
\begin{align}
& \xb_i(t,x^0_i,\ub_i,\mub_i) = E\{ \xb_i\vert_{x_i^0,\ub_i}(t)|x_i^0\}  \notag \\
 & = A_i^t x_i^0 + \sum_{\tau=0}^{t-1} A_i^{t-1-\tau} ( B_i u_i(\tau)+ G_i \mu_i ), \label{eq:ximean}\\
& \yb_i(t,x^0_i,\ub_i,\mub_i,\zerob) = E\{ \yb_i\vert_{x_i^0,\ub_i}(t)|x_i^0\} = C_i \xb_i(t,x^0_i,\ub_i,\mu_i) \notag\\
& = C_i A_i^t x_i^0  +\! \sum_{\tau=0}^{t-1} C_i A_i^{t-1-\tau}\!( B_i u_i(\tau)+ G_i \mu_i ). \label{eq:yimean}
\end{align}
It is clear that random variables $\xb_i\vert_{x_i^0,\ub_i}(t)$
and $\yb_i\vert_{x_i^0,\ub_i}(t)$ can be degenerate or not, depending on the rank of matrices
$\Reach_t (A_i,G_i)$, $C_i \Reach_t (A_i,G_i)$, and $\Psi_i$.
We set $\Reach (A_i,G_i)=\Reach_{n_i} (A_i,G_i)$ 
(by Cayley-Hamilton theorem, $\im(\Reach_t (A_i,G_i))=\im(\Reach (A_i,G_i))$ for any time $t\geq n_i$).
Formulas \eqref{eq:xlin2} and \eqref{eq:ylin2} allow to compute the conditional covariances
\begin{align}
&  \cov\big(\xb_i\vert_{x_i^0,\ub_i}(t),\xb_i\vert_{x_i^0,\ub_i}(\tau)\big)
       =\sum_{h=0}^{\tau-1} A_i^{t-\tau+h} G_iG_i^T (A_i^{h})^T,   \label{eq:covxprocess}\\
&  \cov\big(\yb_i\vert_{x_i^0,\ub_i}(t),\yb_i\vert_{x_i^0,\ub_i}(\tau)\big) = \notag\\
& \hspace{2cm} \Psi_i + \sum_{h=0}^{\tau-1} C_iA_i^{t-\tau+h} G_iG_i^T (A_i^{h})^T C_i^T. \label{eq:covyprocess}
\end{align}
where $t\ge \tau$.
We also need to recall the notion of linear equivalence of stochastic linear control systems:

\begin{definition}   \label{def:LinEquiv}
Two stochastic linear control systems $\Sigma_1$ and $\Sigma_2$ as in \eqref{eq:systemSi} are linearly equivalent, denoted 
\[
\Sigma_1 \cong_{\lin} \Sigma_2,
\]
if $\Psi_1=\Psi_2$, $n_1=n_2$, and there exists an invertible matrix $\mathbb{T}\in\Real^{n_1 \times n_1}$, called transformation matrix, such that:
\begin{equation}
\label{algequiv}
\begin{array}
{lll}
A_2=\mathbb{T}  A_1 \mathbb{T} ^{-1} , & B_2=\mathbb{T}  B_1 ,             & C_2 = C_1 \mathbb{T} ^{-1},\\
G_2=\mathbb{T}  G_1 ,        & G_2 \mu_2=\mathbb{T}  G_1 \mu_1.  & 
\end{array}
\end{equation}
\end{definition}

The notion of linear equivalence is an equivalence relation on the class of linear systems.
We can now introduce the notion of equivalence of stochastic external behavior.

\begin{definition}    \label{def:ExtEquiv}
Consider two stochastic control systems $\Sigma_1$ and $\Sigma_2$ as in \eqref{eq:systemSi}, 
and a relation $\Rcal\subseteq \Real^{n_1}\times \Real^{n_2}$ that is a subspace.

$\Sigma_1$ and $\Sigma_2$ are said to have equivalent stochastic external behavior with respect to $\Rcal$ 
if for any $(x_1^0,x_2^0)\in\Rcal$ and for any input $\ub$
\begin{equation} \label{eq:eqprocy}
\yb_1\vert_{x_1^0,\ub} \sim \yb_2\vert_{x_2^0,\ub}.
\end{equation}
$\Sigma_1$ and $\Sigma_2$ are said to have equivalent stochastic external behavior, denoted 
\[
\Sigma_1 \cong_{\ext} \Sigma_2,
\]
if there exists a subspace total relation $\Rcal$ 
such that $\Sigma_1$ and $\Sigma_2$ have equivalent stochastic external behavior with respect to $\Rcal$.
\end{definition}

The above notion has been obtained by reinterpreting the notion of equivalence of external behavior given for behavioral systems, see e.g. \cite{BisimSchaft,Willems}, in a stochastic setting. The notion of equivalence of stochastic external behavior is an equivalence relation on the class of stochastic linear control systems.
We now proceed with a further step and propose a notion of stochastic bisimulation equivalence. 
We start by considering the case of linear systems with non-degenerate disturbances.

\begin{definition}
\label{defstochbisimnondeg}
Given a pair of stochastic control systems $\Sigma_1$ and $\Sigma_2$, as in \eqref{eq:systemSi} with $\rank(G_i) =n_i$, a subspace $\Rcal\subseteq \Real^{n_1} \times \Real^{n_2}$ is a stochastic bisimulation relation between $\Sigma_1$ and $\Sigma_2$ if for any pair $(x_1^0,x_2^0)\in\Rcal$ and any 
input $\ub$ the following conditions hold for all times $t\in \mathbb{N}$\\
(i) For any measurable set $X_1\subseteq \Rcal^{-1}(\Real^{n_2})$ \\
\[
\begin{array}
{l}
\mathbf{P}\!\left( \xb_1\vert_{x_1^0,\ub}(t) \in X_1\big|x_1^0\right)\!=
\mathbf{P}\!\left( \xb_2\vert_{x_2^0,\ub}(t) \in \Rcal(X_1)\big|x_2^0\right);
\end{array}
\]
(ii) For any measurable set $X_2\subseteq \Rcal(\Real^{n_1})$\\
\[
\begin{aligned}
 \mathbf{P}\!\left( \xb_2\vert_{x_2^0,\ub}(t)  \in X_2\big|x_2^0 \right) \!=
\mathbf{P}\!\left( \xb_1\vert_{x_1^0,\ub}(t) \in \Rcal^{-1}(X_2)\big|x_1^0\right);
\end{aligned}
\]
(iii) $\yb_1\vert_{x_1^0,\ub} \sim \yb_2\vert_{x_2^0,\ub} $.
\\[4pt]
Systems $\Sigma_1$ and $\Sigma_2$ are equivalent via stochastic bisimulation, if there exists a total stochastic bisimulation relation between them.
\end{definition}

Note that in conditions (i) and (ii) of Definition \ref{defstochbisimnondeg} we consider measurable sets $X_1\subseteq \Rcal^{-1}(\Real^{n_2})$ and $X_2\subseteq \Rcal(\Real^{n_1})$ rather than all measurable sets $X_1\subseteq \Real^{n_1}$ and $X_2\subseteq \Real^{n_2}$. 
  This choice is motivated by the fact that since relation $\Rcal$ may be not total, sets $\Rcal(X_1)$ and $\Rcal^{-1}(X_2)$ may be not defined for some sets $X_1\subseteq \Real^{n_1}$ and $X_2\subseteq \Real^{n_2}$ while they are defined for all sets $X_1\subseteq \Rcal^{-1}(\Real^{n_2})$ and $X_2\subseteq \Rcal(\Real^{n_1})$. 
	When $\Rcal$ is total, all measurable sets $X_1\subseteq \Real^{n_1}$ and $X_2\subseteq \Real^{n_2}$ are clearly considered. 
Definition \ref{defstochbisimnondeg} has been inspired by analogue notions given for probabilistic chains and Markov processes, see e.g. \cite{Larsen:91,HermannsBook,Desharnais2004}. A detailed discussion in this regard is reported in Section \ref{sec7}. 

\begin{remark}
As stressed in the introduction, this paper is within the research line (D2)--(R3)--(E1), where notions of stochastic bisimulation have been also proposed for General Stochastic Hybrid Systems (GSHS) with no inputs and outputs in \cite{BujorianuHSCC05} and, for Communicating Piecewise Deterministic Markov Processes (CPDMP) in \cite{StrubbeHSCC05,StrubbeHybridge}. A comparison of the proposed Definition \ref{defstochbisimnondeg} with the ones given in \cite{BujorianuHSCC05} and \cite{StrubbeHSCC05,StrubbeHybridge} follows. Although the mathematical tools employed in \cite{BujorianuHSCC05} are based on Category Theory and hence, different from the ones utilized in the present paper, the notion proposed in \cite{BujorianuHSCC05} in fact generalizes the one of stochastic bisimulation given for Labelled Markov Processes with countable sets of states in \cite{Blute:97} to GSHS. Since the definition given in \cite{Blute:97} generalizes the one given in \cite{Larsen:91} for probabilistic chains, then both Definition \ref{defstochbisimnondeg} and the one given in \cite{BujorianuHSCC05}, are in fact based on the same ideas given in the seminal work \cite{Larsen:91}. Regarding the comparison with the definitions of stochastic bisimulation given in \cite{StrubbeHSCC05,StrubbeHybridge} for CPDMP, we recall that the semantic of CPDMP is characterized by no stochasticity in the continuous--state flow; stochasticity only appears in the discrete--state dynamics, via spontaneous Poisson--type transitions and in the reset of both continuous and discrete variables. Since the systems in (\ref{eq:systemSi}) present stochasticity in the continuous--state flow and have no discrete--state dynamics, Definition \ref{defstochbisimnondeg} and the one given in \cite{StrubbeHSCC05,StrubbeHybridge} are not comparable. However, we mention that the definitions proposed in \cite{StrubbeHSCC05,StrubbeHybridge} are inspired by the one given in \cite{HermannsBook} 
for Interactive Markov Chains which combines the classical definition of bisimulation for concurrent (non--stochastic) processes (see e.g. \cite{Milner,Park}) with the one given in \cite{Larsen:91} for probabilistic chains. Hence, also in this case, the seminal work \cite{Larsen:91} is a common denominator in inspiring Definition \ref{defstochbisimnondeg} and the one given in \cite{StrubbeHSCC05,StrubbeHybridge}.
\end{remark}

As stressed at the beginning of this section, in this paper we consider linear systems with possibly degenerate disturbance distribution. 
The following example shows that Definition \ref{defstochbisimnondeg} is not appropriate to deal with linear systems with disturbances with degenerate distributions.

\begin{example}
\label{example0}
Consider a pair of stochastic control systems $\Sigma_1$ and $\Sigma_2$ as in (\ref{eq:systemSi}) where: 
\[
A_1=\begin{bmatrix} 
1 & 0\\
0 & 2
\end{bmatrix},
B_1=G_1=\begin{bmatrix} 1 \\ 0  \end{bmatrix},
C_1=\begin{bmatrix} 1 & 0 \end{bmatrix}
\]
$A_2=B_2=G_2=C_2=1$ and $w_i(t) \sim \mathcal{N}(0,1)$, without output noise $\nub_i$.
  The dynamics of $\Sigma_1$ and $\Sigma_2$ suggest that $\Sigma_1\cong_{\bis} \Sigma_2$ 
with stochastic bisimulation relation $\Rcal$ defined by $(x_1^0,x_2^0)\in \Rcal$ if and only if $x_1^0[1]=x_2^0$;
indeed, the dynamics of $x_2$ coincide with the dynamics of $x_1 [1]$, $y_2(t)=x_2(t)$ and $y_1(t) = x_1[1](t)$. 
We now apply Definition \ref{defstochbisimnondeg} only at time $t=1$. 
  We consider $(x_1^0,x_2^0)=(0,0)\in \Rcal$, $\ub
	= \zerob$ and 
the two measurable sets $X'_1, X''_1\subseteq \Real^{n_1}$ depicted in Fig. \ref{fig1}. 
We first note that $\Rcal(X'_1)=\Rcal(X''_1)=X'_2$, with $X'_2\subseteq \Real^{n_2}$ as depicted in Fig. \ref{fig1}. Hence, according to condition (i) of Definition \ref{defstochbisimnondeg}, a necessary condition for $\Sigma_1 \cong_{\bis} \Sigma_2$ is that 
\begin{equation}
\label{giobo}
\mathbf{P}(\xb_1|_{0,\zerob}(1) \in X'_1|0) =\mathbf{P}(\xb_1|_{0,\zerob}(1)\in X''_1|0).
\end{equation}
However, since $\supp(\xb_1|_{0,\zerob}(1))=\supp(G_1 w_1(0))=\im(G_1)$, 
by (\ref{supporto}) we get 
$\mathbf{P}(\xb_1|_{0,\zerob}(1) \in X'_1|0)=0$  and 
$\mathbf{P}(\xb_1|_{0,\zerob}(1) \in X''_1|0)\neq 0$, thus contradicting (\ref{giobo}). 
Hence, $\Sigma_1$ and $\Sigma_2$ are not equivalent via stochastic bisimulation according to Definition \ref{defstochbisimnondeg}. 
\end{example}

The above example motivates us to extend Definition \ref{defstochbisimnondeg} to linear systems with possibly degenerate disturbances, as follows:

\begin{definition}   \label{def:stochbisim}
Given a pair of stochastic control systems $\Sigma_1$ and $\Sigma_2$, as in (\ref{eq:systemSi}), a subspace $\Rcal\subseteq \Real^{n_1} \times \Real^{n_2}$ is a stochastic bisimulation relation between $\Sigma_1$ and $\Sigma_2$ if for any pair $(x_1^0,x_2^0)\in\Rcal$ and any input $\ub$ the following conditions hold for all times $t\in \mathbb{N}$\\
(i) For any measurable set $X_1\subseteq \Rcal^{-1}(\Real^{n_2})$ \\
\begin{equation}
\label{condI}
\begin{aligned}
& \mathbf{P}\!\left(\xb_1\vert_{x_1^0,\ub}(t)\in X_1\big|x_1^0\right)=\\
& \mathbf{P}\!\left(\xb_2\vert_{x_2^0,\ub}(t)\in 
   \Rcal\big(X_1\!\cap \supp(\xb_1\vert_{x_1^0,\ub}(t))\big)\big|x_2^0\right);
\end{aligned}
\end{equation}
(ii) For any measurable set $X_2\subseteq \Rcal(\Real^{n_1})$\\
\begin{equation}
\label{condII}
\begin{aligned}
& \mathbf{P}\!\left(\xb_2\vert_{x_2^0,\ub}(t)\in X_2\big|x_2^0\right)=\\
& \mathbf{P}\!\left(\xb_1\vert_{x_1^0,\ub}(t)\in 
   \Rcal^{-1}\big(X_2\!\cap \supp(\xb_2\vert_{x_2^0,\ub}(t))\big)\big|x_1^0\right);
\end{aligned}
\end{equation}
(iii) $\yb_1\vert_{x_1^0,\ub} \sim \yb_2\vert_{x_2^0,\ub}$.
\\[4pt]
Systems $\Sigma_1$ and $\Sigma_2$ are equivalent via stochastic bisimulation, denoted 
\[
\Sigma_1 \cong_{\bis} \Sigma_2,
\]
if there exists a total stochastic bisimulation relation between them.
\end{definition}

\vspace{5pt}

Note that by property (iii), if $\Sigma_1$ and $\Sigma_2$ are equivalent via stochastic bisimulation 
then they have equivalent stochastic external behavior. In the sequel, if not stated explicitly, when referring to equivalence via stochastic bisimulation we consider Definition \ref{def:stochbisim}.

\textit{Example \ref{example0}:} (Continued) 
When conditions (i) and (ii) of Definition \ref{defstochbisimnondeg} are replaced by conditions (i) and (ii) of Definition \ref{def:stochbisim}, one gets 
$\Rcal(X'_1\cap \supp(x_1(1)))=\Rcal(\varnothing)= \varnothing$  
and $\Rcal(X''_1\cap \supp(x_1(1)))=X''_2\neq\varnothing$. 
Therefore, condition (i) of Definition \ref{def:stochbisim} correctly distinguishes between sets $X'_1$ and $X''_1$, whereas condition (i) of Definition \ref{defstochbisimnondeg} does not. 
    A straightforward computation reveals indeed that systems $\Sigma_1$ and $\Sigma_2$ are equivalent via stochastic bisimulation according to Definition \ref{def:stochbisim}, while we showed they are not according to Definition \ref{defstochbisimnondeg}.

\begin{figure}
\begin{center}
\includegraphics[scale=0.45]{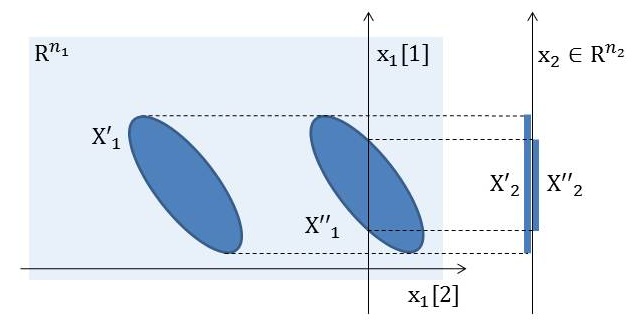} 
\caption{Illustration of the sets involved in Example \ref{example0}.}
\label{fig1}
\end{center}
\end{figure}
The notion of equivalence via stochastic bisimulation is an equivalence relation on the class of linear systems. 
Classical notions of bisimulation equivalences given for deterministic, non--deterministic and stochastic discrete (concurrent) processes preserve reachability properties of equivalent states, together with e.g. linear temporal logic properties, see e.g. \cite{ModelChecking,HermannsBook}. These notions only involve next states of equivalent states, rather than all states involved in runs originating from equivalent states. Definition \ref{def:stochbisim} clearly preserves reachability properties of states related by relation $\Rcal$. 
In contrast to the case of discrete processes, however, it requires properties (i)--(iii) to hold for all times $t\in\mathbb{N}$, rather than only for time $t=1$. 
We now show by a simple example that if the notion of stochastic bisimulation equivalence 
is defined in one step (i.e., only for $t=1$), 
stochastic reachability properties of states related by $\Rcal$ may be not preserved.

\begin{example}
Consider a pair of stochastic control systems $\Sigma_1$ and $\Sigma_2$ as in (\ref{eq:systemSi}) where: 
\[
\begin{array} {l}
A_1= \begin{bmatrix}  b & 1 \\  0 & a \end{bmatrix},
B_1 = 0_{2 \times 1},  
G_1=\begin{bmatrix} 0 \\ \sigma  \end{bmatrix},
C_1=\begin{bmatrix} 0 & 1  \end{bmatrix},  \\[8pt]
A_2= \begin{bmatrix}  b & 0 \\  0 & a \end{bmatrix},
B_2 = 0_{2 \times 1},  
G_2=\begin{bmatrix} 0 \\ \sigma  \end{bmatrix},
C_2=\begin{bmatrix} 0 & 1  \end{bmatrix},\\[8pt]
\end{array}
\]
without output noise $\nub_i$, with $a,b,\sigma \neq 0$. 
We first show that properties (i)--(iii) of Definition \ref{def:stochbisim} are satisfied for time $t=1$, that is to say that systems $\Sigma_1$ and $\Sigma_2$ are equivalent via stochastic bisimulation "in one step", as typically required for discrete processes. Consider the relation $\Rcal$ defined by $(x_1^0,x_2^0)\in\Rcal$ if and only if $x_1^0[2]=x_2^0[2]$
(recall that the argument within square brackets denotes the component of a vector).
We start with condition (i). 
Consider any $(x_1^0,x_2^0)\in\Rcal$, any input $\ub$, any measurable set $X_1\subseteq \Real^{n_1}$ and define 
$X_{1,2} =\{x_1[2]\in\Real | \exists x_1[1]\in\Real \text{ s.t. } 
x_1\in X_1 \cap\supp(\xb_1\vert_{x_1^0,\ub}(1))\}$. 
Then, one gets:
\[
\begin{aligned}
\mathbf{P}\big( & \xb_1\vert_{x_1^0,\ub}(1)) \in X_1|x_1^0\big) = 
\mathbf{P}\big(a x_1^0[2] +\sigma w_1(0) \in X_{1,2}\big)\\
     & = \mathbf{P}\big(a x_2^0[2] +\sigma w_2(0) \in X_{1,2}\big)\\
     & = \mathbf{P}\big(\xb_2\vert_{x_2^0,\ub}(1) \in \Rcal(X_{1}\cap \supp(\xb_1\vert_{x_1^0,\ub}(1)))|x_2^0\big),
\end{aligned}
\]
which is condition (i) of Definition \ref{def:stochbisim} for $t=1$.
Condition (ii) can be shown similarly and condition (iii) is trivially satisfied
because $C_i\xb_i\vert_{x_i^0,\ub_i}(1)=a x_i^0[2] +\sigma w_i(0)$ in this example.
We now show that $\Rcal$ does not satisfy those conditions for time $t=2$.
We consider $(x_i^0,\ub_i)=(0,\zerob)$ and obtain:
\[
\begin{aligned}
\xb_1\vert_{0,\zerob}(2) & = 
\begin{bmatrix}    0 & \sigma  \\
							\sigma & a\sigma \end{bmatrix}
\begin{bmatrix} \wb_1(1) \\ \wb_1(0) \end{bmatrix} \\[8pt]
\xb_2\vert_{0,\zerob}(2) & = 
\begin{bmatrix}    0 & 0\\
						\sigma & a\sigma \end{bmatrix}
\begin{bmatrix} \wb_2(1) \\ \wb_2(0) \end{bmatrix}
\end{aligned}
\]
Since $\xb_1\vert_{0,\zerob}(2)$ is non--degenerate, there exists a pair of sets in form of 
$X_1 =Z_1 \times Z_2$ and $X_1' =Z'_1 \times Z_2$ with $Z_1 \subset Z'_1$ such that:
\begin{equation} \label{pyt}
\mathbf{P}(\xb_1\vert_{0,\zerob}(2) \in X_1|0) < 
\mathbf{P}(\xb_1\vert_{0,\zerob}(2) \in X_1'|0).
\end{equation}
Considering that  $\supp(\xb_1\vert_{0,\zerob}(2))=\Real^2$,
condition (i) of Definition \ref{def:stochbisim} for set $X_1$ rewrites as:
\[
\mathbf{P}( \xb_1\vert_{0,\zerob}\!(2) \in Z_1 \times Z_2|0)=
\mathbf{P}( \xb_2\vert_{0,\zerob}\!(2) \in \Rcal(Z_1 \times Z_2)|0)
\]
Note that $\Rcal(Z_1 \times Z_2)=\Real \times Z_2$, because
$(x_1,x_2)\in\Rcal$ if $x_1[2]=x_2[2]$.
Thus
\[
\mathbf{P}( \xb_1\vert_{0,\zerob}(2) \in Z_1 \times Z_2|0)=
\mathbf{P}( \xb_2\vert_{0,\zerob}(2) \in \Real \times Z_2 |0)
\]
Analogously, condition (i) of Definition \ref{def:stochbisim} for set 
$X_1'=Z_1' \times Z_2$ rewrites as:
\[
\mathbf{P}( \xb_1\vert_{0,\zerob}(2) \in Z_1' \times Z_2|0)=
\mathbf{P}( \xb_2\vert_{0,\zerob}(2) \in \Real \times Z_2 |0)
\]
From these it follows
\[
\mathbf{P}( \xb_1\vert_{0,\zerob}(2) \in Z_1 \times Z_2|0)=
\mathbf{P}( \xb_1\vert_{0,\zerob}(2) \in Z_1' \times Z_2|0)
\]
which contradicts the inequality in (\ref{pyt}). 
Hence, condition (i) of Definition \ref{def:stochbisim} is not satisfied for time $t=2$,
although it is satisfied at $t=1$.
\end{example}

The above example motivated us to propose a definition of stochastic bisimulation equivalence in $t$ steps rather than in one step, as instead commonly done for discrete processes. \\
Connections between the notions introduced are now discussed. By comparing Definitions \ref{def:LinEquiv}, \ref{def:ExtEquiv} and \ref{def:stochbisim}, it is readily seen that:

\begin{proposition}   \label{prop:connections}
\textit{ }
\begin{itemize}
\item If $\Sigma_1 \cong_{\lin} \Sigma_2$ then $\Sigma_1 \cong_{\bis} \Sigma_2$;
\item If $\Sigma_1 \cong_{\bis} \Sigma_2$ then $\Sigma_1 \cong_{\ext} \Sigma_2$.
\end{itemize}
\end{proposition}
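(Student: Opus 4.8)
The plan is to treat the two implications separately. The second one is almost immediate: if $\Sigma_1 \cong_{\bis} \Sigma_2$, fix a total stochastic bisimulation relation $\Rcal$ between them. By construction $\Rcal$ is a subspace and total, and condition (iii) of Definition \ref{def:stochbisim} states precisely that $\yb_1\vert_{x_1^0,\ub} \sim \yb_2\vert_{x_2^0,\ub}$ for every $(x_1^0,x_2^0)\in\Rcal$ and every input $\ub$. This is exactly the requirement \eqref{eq:eqprocy} in Definition \ref{def:ExtEquiv} for the same relation $\Rcal$, so $\Sigma_1$ and $\Sigma_2$ have equivalent stochastic external behavior with respect to $\Rcal$, whence $\Sigma_1 \cong_{\ext} \Sigma_2$. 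This was already observed informally right after Definition \ref{def:stochbisim}.

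For the first implication, assume $\Sigma_1 \cong_{\lin} \Sigma_2$ with transformation matrix $\mathbb{T}$, so that $n_1=n_2=:n$, $\Psi_1=\Psi_2$, and the relations \eqref{algequiv} hold. The natural candidate for a bisimulation relation is the graph of $\mathbb{T}$, namely $\Rcal=\{(x_1,x_2)\in\Real^{n}\times\Real^{n}: x_2=\mathbb{T}x_1\}$. This $\Rcal$ is a subspace, and since $\mathbb{T}$ is invertible it is total; moreover $\Rcal(S)=\mathbb{T}S$ and $\Rcal^{-1}(S)=\mathbb{T}^{-1}S$ for every set $S$, and every pair in $\Rcal$ has the form $(x_1^0,\mathbb{T}x_1^0)$.

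The crux is to prove the process-level identity $\mathbb{T}\,\xb_1\vert_{x_1^0,\ub} \sim \xb_2\vert_{\mathbb{T}x_1^0,\ub}$ for every $x_1^0$ and $\ub$. Both are Gaussian processes, so it suffices to match their finite-dimensional conditional means and covariances. Substituting $A_2^k=\mathbb{T}A_1^k\mathbb{T}^{-1}$, $B_2=\mathbb{T}B_1$ and $G_2\mu_2=\mathbb{T}G_1\mu_1$ into \eqref{eq:ximean} yields $\xb_2(t,\mathbb{T}x_1^0,\ub,\mub_2)=\mathbb{T}\,\xb_1(t,x_1^0,\ub,\mub_1)$, so the conditional means transform by $\mathbb{T}$; substituting $A_2^k=\mathbb{T}A_1^k\mathbb{T}^{-1}$ and $G_2G_2^T=\mathbb{T}G_1G_1^T\mathbb{T}^T$ into \eqref{eq:covxprocess} gives the covariances of $\xb_2\vert_{\mathbb{T}x_1^0,\ub}$ as $\mathbb{T}(\cdot)\mathbb{T}^T$ applied to those of $\xb_1\vert_{x_1^0,\ub}$, which are exactly the covariances of $\mathbb{T}\,\xb_1\vert_{x_1^0,\ub}$. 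This establishes the claimed equivalence, and in particular $\supp(\xb_2\vert_{\mathbb{T}x_1^0,\ub}(t))=\mathbb{T}\,\supp(\xb_1\vert_{x_1^0,\ub}(t))$ for each $t$.

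With this in hand, conditions (i)--(iii) follow. For (i), totality of $\Rcal$ lets $X_1$ range over all measurable subsets of $\Real^{n}$; using $\Rcal(S)=\mathbb{T}S$, the distributional equivalence above, and invertibility of $\mathbb{T}$, the right-hand side of \eqref{condI} equals $\mathbf{P}(\xb_1\vert_{x_1^0,\ub}(t)\in X_1\cap\supp(\xb_1\vert_{x_1^0,\ub}(t))\,|\,x_1^0)$, which by \eqref{supporto} equals the left-hand side. Condition (ii) is symmetric, replacing $\mathbb{T}$ by $\mathbb{T}^{-1}$. Condition (iii) follows from $C_2=C_1\mathbb{T}^{-1}$ together with $\Psi_1=\Psi_2$ and the independence of $\nub_i$ from the state: applying $C_1\mathbb{T}^{-1}$ to the state equivalence and adding the identically distributed output noise gives $\yb_2\vert_{\mathbb{T}x_1^0,\ub}\sim\yb_1\vert_{x_1^0,\ub}$, which can also be read off directly by transforming \eqref{eq:yimean} and \eqref{eq:covyprocess}. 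I expect the only delicate point to be the bookkeeping with the support terms in \eqref{condI}--\eqref{condII}: one must use the intersection with $\supp(\xb_1\vert_{x_1^0,\ub}(t))$ together with \eqref{supporto} precisely to absorb the degeneracy, which is exactly the feature that distinguishes Definition \ref{def:stochbisim} from Definition \ref{defstochbisimnondeg}.
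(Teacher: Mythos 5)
Your proof is correct; the paper in fact offers no proof of this proposition (it is dismissed as ``readily seen'' by comparing the definitions), and your argument---the graph of $\mathbb{T}$ as the total bisimulation relation for the first implication, matching Gaussian means and covariances via \eqref{algequiv}, and reading off the second implication from condition (iii)---is exactly the intended justification, with the support bookkeeping via \eqref{supporto} and invertibility of $\mathbb{T}$ handled properly.
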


The converse implication of the first statement is not true in general, see e.g. Example \ref{example0} (Continued).
The converse implication of the second statement is also not true in general, as shown in the following example. 

\begin{example}
\label{exampleimpl}
Consider a pair of stochastic control systems $\Sigma_1$ and $\Sigma_2$ as in (\ref{eq:systemSi}) with  
\[
A_1=\begin{bmatrix} 1 & 0 \\
										0 & 2  \end{bmatrix},
B_1=\begin{bmatrix} 1 \\ 0 \end{bmatrix}, \quad 
\begin{aligned} 
G_1 & = I_2,\\
C_1 & =\begin{bmatrix} 1 & 0 \end{bmatrix},
\end{aligned}
\]
$A_2=B_2=G_2=C_2=1$, $w_1(t) \sim \mathcal{N}(0_2,I_2)$, $w_2(t) \sim \mathcal{N}(0,1)$, 
and without output noises $\nub_1$, $\nub_2$.
We have
\[
\begin{aligned}
\yb_1(t,x^0_1,\ub_1,\wb_1) & = x_1^0[1] + \sum_{\tau=0}^{t-1} (u_1(\tau)+w_1[1](\tau))\\
\yb_2(t,x^0_2,\ub_2,\wb_2) & = x_2^0 + \sum_{\tau=0}^{t-1} (u_2(\tau)+w_2(\tau)).
\end{aligned}
\]
Define the total relation $\Rcal\subseteq \Real^2 \times \Real$ by $(x_1,x_2)\in\Rcal$ if and only if $x_1[1]=x_2$. 
Then, for any $(x_1^0,x_2^0)\in\Rcal$ and input $\ub$:
\[
\yb_1|_{x^0_1,\ub} \sim \yb_2|_{x^0_2,\ub}
\]
and therefore $\Sigma_1$ and $\Sigma_2$ have equivalent stochastic external behavior ($\Sigma_1 \cong_{\ext} \Sigma_2$).
  We now show that $\Sigma_1$ and $\Sigma_2$ are not equivalent via stochastic bisimulation. 
	Suppose by contradiction that a total stochastic bisimulation relation $\Rcal$ exists between $\Sigma_1$ and $\Sigma_2$. 
Since $\Rcal$ is a subspace, it is always possible to find an invertible matrix $T\in\Real^{2\times 2}$ such that $(x_1,x_2)\in\Rcal$ if and only if $x_2=z_1 [1]$ where $z_1=T x_1$. 
   Consider $(x_1^0,x_2^0)=(0,0)\in\Rcal$ and select $\ub
	= \mathbf{0}$. 
   Consider the sets $X_{1,1}=T^{-1}([0,1]\times [0,1])$ and $X_{1,2}=T^{-1}([0,1]\times [0,2])$. 
	 Note that by construction $X_{1,1}\subset X_{1,2}$ and $\Rcal(X_{1,1})=\Rcal(X_{1,2})$.
Moreover, since $\supp\big(\xb_1|_{x_1^0,\ub_1}(1)\big)=\im(G_1)=\Real^2$ we have also:
\begin{equation}
\label{Rexample}
  \Rcal\!\big(X_{1,1}\cap \supp(\xb_1|_{x_1^0,\ub_1}(1))\big)
= \Rcal\!\big(X_{1,2}\cap \supp(\xb_1|_{x_1^0,\ub_1}(1))\big).
\end{equation}
From this, it easily follows that in order for condition (i) in Definition \ref{def:stochbisim}
to be satisfied it is necessary that
\begin{equation} \label{eq:condiproof} 
\mathbf{P}(\xb_1|_{x_1^0,\ub_1}(1) \in X_{1,1}|x_1^0)  
  = \mathbf{P}(\xb_1|_{x_1^0,\ub_1}(1) \in X_{1,2}|x_1^0).
\end{equation}
However, we can show that with the given choice of $X_{1,1}$ and $X_{1,2}$,
and $(x_1^0,x_2^0)=(0,0)$ and $\ub
= \mathbf{0}$, we have
\begin{equation} \label{eq:ineqcondi} 
\mathbf{P}(\xb_1|_{0,\zerob}(1) \in X_{1,1}|0)  
 < \mathbf{P}(\xb_1|_{0,\zerob}(1) \in X_{1,2}|0).
\end{equation}
thus contradicting \eqref{eq:condiproof}, and hence condition (i).
To prove inequality \eqref{eq:ineqcondi} note that for $x_1^0=0$ and $\ub=\zerob$
we have $\xb_1|_{0,\zerob}(1)=w_1(0)$.
Defining the nondegenerate random vector $v_1=T w_1(0)$, we get
\[
\begin{aligned}
\mathbf{P}(\xb_1|_{0,\zerob}(1) & \in X_{1,1}|x_1^0) 
    = \mathbf{P}(w_1(0)\in X_{1,1})\\
  = \ & \mathbf{P}(T w_1(0)\in T X_{1,1})\\
  = \ & \mathbf{P}(v_1\in [0,1]\times [0,1])\\
      & < \mathbf{P}(v_1\in [0,1]\times [0,2])\\
  & \quad = \mathbf{P}(T w_1(0) \in T X_{1,2})\\
  & \quad = \mathbf{P}(w_1(0)\in X_{1,2})\\
  & \quad = \mathbf{P}(\xb_1|_{0,\zerob}(1) \in X_{1,2}|x_1^0).
\end{aligned}
\] 
Thus, condition (i) in Definition \ref{def:stochbisim} cannot be satisfied for any total relation $\Rcal$,
and $\Sigma_1$ and $\Sigma_2$ are not equivalent via stochastic bisimulation.
\end{example}

\section{Geometric conditions}   \label{sec4}

In this section we derive geometric conditions characterizing the equivalence notions in Definitions \ref{def:ExtEquiv} and 
\ref{def:stochbisim}. 
Without loss of generality, we consider subspaces $\Rcal$ in Definitions \ref{def:ExtEquiv} and \ref{def:stochbisim} in the form of 
\begin{equation}  \label{eq:Rlin}
\Rcal=\ker (\begin{bmatrix} R_1 & -R_2 \end{bmatrix})\subseteq \Real^{n_1} \times \Real^{n_2},
\end{equation}
where $R_i\in\Real^{r\times n_i}$, $i=1,2$, so that
\begin{equation}
(x_1,x_2)\in \Rcal \quad \Longleftrightarrow\quad R_1 x_1 = R_2 x_2.
\end{equation}

\subsection{Some Technical Lemmas}

This subsection collects some technical results needed to prove the main results of the section. 
For any subsets $X_1\subseteq\Real^{n_1}$ and $X_2\subseteq\Real^{n_2}$ we have
\begin{equation} \label{eq:calcinvXi}
\Rcal(X_1) = R_2^{-1}(R_1 X_1),\ \ 
\Rcal^{-1}(X_2) = R_1^{-1}(R_2 X_2),
\end{equation}
as it easily follows from the identities below
\begin{equation} 
\begin{aligned}
\Rcal(X_1)& = \{x_2\in\Real^{n_2}\vert\, \exists x_1\in X_1     \ \text{s.t.} \  R_1 x_1 = R_2 x_2 \}\\
          & = \{x_2\in\Real^{n_2}\vert\, \exists   y\in R_1 X_1 \ \text{s.t.} \  y = R_2 x_2 \} \\
					& = R_2^{-1}(R_1 X_1),
\end{aligned}
\end{equation}
and similarly for $\Rcal^{-1}(X_2)$. Note that for some nonempty $X_1\subseteq\Real^{n_1}$ (or $X_2\subseteq\Real^{n_2}$) the set $\Rcal(X_1)$ (or $\Rcal^{-1}(X_2)$) can be empty, unless $\Rcal$ is a total relation.
	
\begin{proposition} \label{prop:total}
Relation $\Rcal$ as in (\ref{eq:Rlin}) is total if and only if $\im(R_1)=\im(R_2)$ or, equivalently, 
\begin{equation} \label{kost}
\rank(R_1)=\rank(R_2)=\rank(\begin{bmatrix} R_1 & -R_2 \end{bmatrix}).
\end{equation}
\end{proposition}

\begin{lemma}
\label{lemmaRlin}
Let $\Rcal$ be a relation as in (\ref{eq:Rlin}) and consider any sets $X_i\subseteq \Real^{n_i}, i=1,2$. 
Then:\\
(i) If $\Rcal$ is total then $R_1 X_1= R_2 \Rcal(X_1)$;\\
(ii) If $\Rcal$ is total then $R_1 \Rcal^{-1}(X_2)= R_2 X_2$;\\
(iii) $\Rcal(X_1)=\Rcal(X_1)+\ker(R_2)$;\\
(iv) $\Rcal^{-1}(X_2)=\Rcal^{-1}(X_2)+\ker(R_1)$.\\
\end{lemma}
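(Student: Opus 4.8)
The plan is to prove each of the four parts directly from the explicit formulas in \eqref{eq:calcinvXi}, namely $\Rcal(X_1)=R_2^{-1}(R_1 X_1)$ and $\Rcal^{-1}(X_2)=R_1^{-1}(R_2 X_2)$, together with the characterization of totality from Proposition \ref{prop:total} that $\Rcal$ is total if and only if $\im(R_1)=\im(R_2)$. Parts (i) and (ii) are symmetric, as are parts (iii) and (iv), so I would prove (i) and (iii) in full and then remark that (ii) and (iv) follow by interchanging the roles of the two indices.

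For part (i), the goal is the set equality $R_1 X_1 = R_2 \Rcal(X_1)$. The inclusion $R_1 X_1 \subseteq R_2 \Rcal(X_1)$ holds without any totality hypothesis: taking any $y = R_1 x_1$ with $x_1 \in X_1$, since $y \in \im(R_1) = \im(R_2)$ there is some $x_2$ with $R_2 x_2 = R_1 x_1$, so $x_2 \in \Rcal(X_1)$ and $y = R_2 x_2 \in R_2\Rcal(X_1)$. For the reverse inclusion, take $z \in R_2 \Rcal(X_1)$, so $z = R_2 x_2$ for some $x_2 \in \Rcal(X_1) = R_2^{-1}(R_1 X_1)$; then $R_2 x_2 \in R_1 X_1$ by definition of the preimage, which gives $z \in R_1 X_1$ directly. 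Here the first inclusion is exactly where totality (i.e. $\im(R_1)=\im(R_2)$) is needed, while the second is purely formal.

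For part (iii), the goal is $\Rcal(X_1) = \Rcal(X_1) + \ker(R_2)$. Using $\Rcal(X_1) = R_2^{-1}(R_1 X_1)$, this amounts to the standard observation that a preimage under a linear map is invariant under translation by the kernel of that map: the inclusion $\Rcal(X_1) \subseteq \Rcal(X_1) + \ker(R_2)$ is trivial since $0 \in \ker(R_2)$, and for the reverse, if $x_2 \in R_2^{-1}(R_1 X_1)$ and $k \in \ker(R_2)$ then $R_2(x_2 + k) = R_2 x_2 \in R_1 X_1$, so $x_2 + k \in R_2^{-1}(R_1 X_1) = \Rcal(X_1)$. Note that (iii) and (iv) require no totality assumption. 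Part (ii) follows from the argument for (i) after swapping $(R_1,X_1)$ with $(R_2,X_2)$ and using $\Rcal^{-1}(X_2)=R_1^{-1}(R_2 X_2)$, and part (iv) follows from the argument for (iii) in the same way.

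I do not expect any genuine obstacle here; the lemma is essentially a bookkeeping exercise in manipulating images and preimages under the linear maps $R_1,R_2$. The only point requiring care is making explicit that the nontrivial inclusion in (i) and (ii) is precisely the one that invokes $\im(R_1)=\im(R_2)$, so that the totality hypothesis is seen to be used exactly where it is needed and nowhere else; the remaining inclusions and all of (iii)--(iv) are formal consequences of the preimage description in \eqref{eq:calcinvXi}.
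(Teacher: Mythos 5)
Your proof is correct and follows essentially the same element-chasing route as the paper, merely routing the argument through the preimage formulas in \eqref{eq:calcinvXi} and the characterization $\im(R_1)=\im(R_2)$ of totality from Proposition \ref{prop:total} instead of working directly with pairs $(x_1,x_2)\in\Rcal$. The only blemish is the opening clause of your argument for (i), which claims the inclusion $R_1X_1\subseteq R_2\Rcal(X_1)$ ``holds without any totality hypothesis'' even though the very next step invokes $\im(R_1)=\im(R_2)$; your closing remark correctly identifies that this is in fact the inclusion that needs totality, so only that one sentence should be fixed.
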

\begin{proof}
{\it Proof of (i).} We first show $R_1 X_1 \subseteq R_2 \Rcal(X_1)$. Consider any $y\in R_1 X_1$. Then, there exists $x_1\in X_1$ such that $y=R_1 x_1$. Since $\Rcal$ is total then there exists $x_2\in\Real^{n_2}$ such that $(x_1,x_2)\in\Rcal$. Note that $x_2\in \Rcal(X_1)$. Since $(x_1,x_2)\in\Rcal$ then, by (\ref{eq:Rlin}), $R_1 x_1 = R_2 x_2$ which in turn, implies $y=R_2 x_2$. Since $R_2 x_2 \in R_2 \Rcal(X_1)$ we get $y\in R_2 \Rcal(X_1)$. We now show $R_2 \Rcal(X_1) \subseteq  R_1 X_1 $. 
Pick any $y \in R_2 \Rcal(X_1)$. Then there exists $x_2 \in \Rcal(X_1)$ such that $y=R_2 x_2$. Since $x_2 \in \Rcal(X_1)$ there exists $x_1 \in X_1$ such that $(x_1,x_2)\in \Rcal$ which implies $y=R_2 x_2 = R_1 x_1\in R_1 X_1$.
\\
{\it Proof of (ii).} This proof follows the same steps as those in the proof of (i).\\
{\it Proof of (iii).}
Since 
$\Rcal(X_1) \subseteq \Rcal(X_1)+\ker(R_2)$, we only need to show $\Rcal(X_1)+\ker(R_2) \subseteq \Rcal(X_1)$ or equivalently:
\begin{equation}
\label{ppm1}
x_2+z_2 \subseteq \Rcal(X_1), \forall x_2 \in \Rcal(X_1), \forall z_2 \in \ker(R_2).
\end{equation}
Consider any $x_2 \in \Rcal(X_1)$ and $z_2\in\ker(R_2)$. There exists $x_1 \in X_1$ such that $(x_1,x_2)\in\Rcal$ or equivalently, $R_1 x_1 = R_2 x_2$. Since $R_1 x_1 = R_2 x_2$ and $R_2 z_2=0$ then $R_1 x_1 = R_2 x_2+R_2 z_2=R_2 (x_2 + z_2)$, or equivalently $(x_1,x_2 +z_2)\in\Rcal$ from which, $x_2 +z_2 \in \Rcal(X_1)$. \\
{\it Proof of (iv).} This proof follows the same steps as those in the proof of (iii).
\end{proof}

For later purposes, we need to point out that 
$R_i\xb_i|_{x_1^0,\ub_1}$, for $i=1,2$, are Gaussian processes
whose conditional means at time $t\in\mathbb{N}$ are:
\begin{equation} \label{eq:condmeanRix}
\begin{aligned}
& E\left(R_i\xb_i|_{x_i^0,\ub_i}(t)\big\vert x_i^0\right) =  R_i \xb_i(t,x^0_i,\ub_i,\mub_i) \\
& \qquad = R_i A_i^t x_i^0 + \sum_{h=0}^{t-1} R_i A_i^{t-1-h}( B_i u_i(h) + G_i \mu_i )
\end{aligned}
\end{equation}
and conditional covariances, for $t\ge \tau$,
\begin{equation} \label{eq:condcovRix}
\begin{aligned}
&  \cov\big(R_i\xb_i\vert_{x_i^0,\ub_i}(t),R_i\xb_i\vert_{x_i^0,\ub_i}(\tau)\big) \\
& \hspace{2.8cm}  =\sum_{h=0}^{\tau-1} R_i A_i^{t-\tau+h} G_iG_i^T (A_i^{h})^T R_i^T.
\end{aligned}
\end{equation}

\begin{lemma}  \label{lem:R1R2processes}
Let $\Rcal=\ker([ R_1 \ -R_2])\subset \Real^{n_1+n_2}$ be a total stochastic bisimulation relation 
for two stochastic control systems $\Sigma_1$ and $\Sigma_2$ as in (\ref{eq:systemSi}).
Then, $\forall (x_1^0,x_2^0)\in\Rcal,$ and for any input $\ub$ we have
\begin{equation} \label{eq:implR1R2}
R_1 \xb_1\big\vert_{x_1^0,\ub} (t)\sim R_2 \xb_2\big\vert_{x_2^0,\ub} (t), \quad \forall t\ge 0.
\end{equation}
\end{lemma}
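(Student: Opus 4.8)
The plan is to read the symbol $\sim$ in the conclusion as equality in law, at each fixed time $t$, of the two $\Real^r$--valued Gaussian vectors $R_1\xb_1\vert_{x_1^0,\ub}(t)$ and $R_2\xb_2\vert_{x_2^0,\ub}(t)$; that is, to show that $\mathbf{P}(R_1\xb_1\vert_{x_1^0,\ub}(t)\in Y\,\vert\,x_1^0)=\mathbf{P}(R_2\xb_2\vert_{x_2^0,\ub}(t)\in Y\,\vert\,x_2^0)$ for every Borel set $Y\subseteq\Real^r$. The starting observation is that $\Rcal$ is total, so Proposition \ref{prop:total} gives $\im(R_1)=\im(R_2)$; consequently $\Rcal^{-1}(\Real^{n_2})=R_1^{-1}(\im R_2)=\Real^{n_1}$ and likewise $\Rcal(\Real^{n_1})=\Real^{n_2}$, so conditions (i) and (ii) of Definition \ref{def:stochbisim} may be applied to \emph{all} measurable sets. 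Throughout, fix $(x_1^0,x_2^0)\in\Rcal$, an input $\ub$, and a time $t$, and abbreviate $S_1=\supp(\xb_1\vert_{x_1^0,\ub}(t))$, which by the Gaussian structure recalled before the lemma is the affine subspace $\xb_1(t,x_1^0,\ub,\mub_1)+\im(\Reach_t(A_1,G_1))$.

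Next I would feed the $R_1$--saturated set $X_1=R_1^{-1}(Y)$ into condition (i). Its left-hand side is exactly $\mathbf{P}(R_1\xb_1\vert_{x_1^0,\ub}(t)\in Y\,\vert\,x_1^0)$, since $\{\xb_1(t)\in R_1^{-1}(Y)\}=\{R_1\xb_1(t)\in Y\}$. For the right-hand side I would rewrite $\Rcal(\cdot)=R_2^{-1}(R_1\,\cdot)$ using \eqref{eq:calcinvXi}, so that the event becomes $\{R_2\xb_2\vert_{x_2^0,\ub}(t)\in R_1(X_1\cap S_1)\}$, and then invoke the elementary set identity $R_1\big(R_1^{-1}(Y)\cap S_1\big)=Y\cap R_1 S_1$ (the inclusion $\subseteq$ is immediate, and for $\supseteq$ any $y=R_1 s$ with $s\in S_1$ and $y\in Y$ satisfies $s\in R_1^{-1}(Y)\cap S_1$). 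This turns condition (i) into
\[
\mathbf{P}\big(R_1\xb_1\vert_{x_1^0,\ub}(t)\in Y\,\big\vert\,x_1^0\big)=\mathbf{P}\big(R_2\xb_2\vert_{x_2^0,\ub}(t)\in Y\cap R_1 S_1\,\big\vert\,x_2^0\big).
\]

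The final step is a support argument. Taking $Y=\Real^r$ in the last display gives $\mathbf{P}(R_2\xb_2\vert_{x_2^0,\ub}(t)\in R_1 S_1\,\vert\,x_2^0)=1$; since $R_1 S_1$ is itself an affine subspace (the image under the linear map $R_1$ of the affine support $S_1$), the vector $R_2\xb_2\vert_{x_2^0,\ub}(t)$ lies in $R_1 S_1$ almost surely, and therefore $Y\cap R_1 S_1$ may be replaced by $Y$ on the right-hand side for every Borel $Y$. This yields the desired equality of laws and hence the lemma.

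I expect the main obstacle to be the careful bookkeeping of supports: condition (i) is genuinely an assertion about $X_1\cap\supp(\xb_1(t))$, and the whole argument hinges on recognizing that $R_1 S_1$ is a closed affine subspace that captures the support of $R_2\xb_2(t)$, so that intersecting with it is harmless. Measurability causes no trouble because every set that appears is a preimage under $R_1$ or $R_2$, or the intersection of such a preimage with the affine subspace $R_1 S_1$; in particular I never need to assert that the forward image of an arbitrary Borel set is Borel. It is worth noting that only condition (i) is used—condition (ii) yields the same conclusion by symmetry—and that, since both laws are Gaussian, the statement is equivalent to the matching of the conditional means \eqref{eq:condmeanRix} and of the one--time ($t=\tau$) conditional covariances \eqref{eq:condcovRix}.
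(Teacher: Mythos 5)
Your proof is correct, and it completes the argument differently from the paper even though both start from the same move (feeding the saturated set $R_1^{-1}(\cdot)$ into condition (i) and rewriting $\Rcal(\cdot)=R_2^{-1}(R_1\,\cdot)$ via \eqref{eq:calcinvXi}). The paper argues by contradiction: assuming $\mathbf{P}(R_1 v_1\in Q)>\mathbf{P}(R_2 v_2\in Q)$, it uses only the \emph{inclusion} $\Rcal\big((R_1^{-1}Q)\cap\supp(v_1)\big)\subseteq R_2^{-1}Q$ to obtain the one-sided bound $\mathbf{P}(R_1 v_1\in Q)\le \mathbf{P}(R_2 v_2\in Q)$, and then must invoke the symmetric reasoning (hence condition (ii)) to rule out the reversed inequality. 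You instead compute the image \emph{exactly}, $R_1\big(R_1^{-1}(Y)\cap S_1\big)=Y\cap R_1 S_1$, and then neutralize the support correction by the clean observation that taking $Y=\Real^r$ in condition (i) forces $\mathbf{P}\big(R_2\xb_2\vert_{x_2^0,\ub}(t)\in R_1S_1\,\big\vert\,x_2^0\big)=1$ (with $R_1S_1$ a closed affine subspace, so this is legitimate). This yields equality of laws directly, without contradiction and using condition (i) alone; what it buys is a sharper bookkeeping of where the probability mass of $R_2\xb_2(t)$ actually lives, at the cost of the extra set identity and the support argument. What the paper's route buys is brevity: the crude inclusion suffices because the two one-sided inequalities from (i) and (ii) pincer the equality. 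Both are valid; your totality remark (via Proposition \ref{prop:total}) justifying that condition (i) applies to all measurable $X_1$ is a point the paper uses implicitly, and your measurability accounting is sound.
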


\begin{proof}
Let $r$ denote the number of rows of matrices $R_i$.
Condition \eqref{eq:implR1R2} is equivalent to claim that for any 
measurable $S\subset \Real^r$ it is 
$P(R_1 \xb_1\vert_{x_1^0,\ub} (t)\in S)=P(R_2 \xb_2\vert_{x_2^0,\ub} (t)\in S)$ for {\it any} 
$(x_1^0,x_2^0)\in\Rcal,$ input $\ub$, and $t\ge 0$.
The necessity of \eqref{eq:implR1R2} is proven by contradiction,
by showing that if $\Rcal$ is a stochastic bisimulation relation,
if for {\it some} $(x_1^0,x_2^0)\in\Rcal,$ input $\ub$, and $t\ge 0$,
there exists a subset $Q\subset \Real^r$ such that
$P(R_1 \xb_1\vert_{x_1^0,\ub} (t)\in Q)\not =P(R_2 \xb_2\vert_{x_2^0,\ub} (t)\in Q)$,
then we arrive at a contradiction.
	In particular, using the shorthand notation $v_i=\xb_i\vert_{x_i^0,\ub}$, $i=1,2$,
we will consider the case
\begin{equation} \label{eq:ineqqP}
P\big( R_1 v_1 \in Q\big) > P\big( R_2 v_2\in Q\big)
\end{equation}
(the case where the inequality is reversed 
can be handled with a symmetric reasoning).
Of course $P\big( R_i v_i \in Q\big)=P\big(v_i \in R_i^{-1}Q \big)$.
Then, by property (i) in Definition \ref{def:stochbisim} of stochastic bisimulation relation we have
\begin{equation} \label{eq:psssA}
\begin{aligned}
P\big( R_1 v_1 \in Q\big)  & = P\big( v_1 \in R_1^{-1} Q\big)  \\
		& = P\big( v_2 \in \Rcal((R_1^{-1} Q)\cap\supp(v_1))\big) 
\end{aligned}
\end{equation}
Obviously 
$\Rcal\big((R_1^{-1} Q)\cap\supp(v_1)\big)\subseteq \Rcal(R_1^{-1} Q)$, and by
eq.\ \eqref{eq:calcinvXi} we have
$\Rcal(R_1^{-1} Q)=R_2^{-1}\big(R_1(R_1^{-1} Q)\big) = R_2^{-1} Q$ 
then
\begin{equation} \label{eq:psssB}
P\big( v_2 \in \Rcal((R_1^{-1} Q)\cap\supp(v_1))\big) 
\le P\big( v_2 \in R_2^{-1} Q\big).
\end{equation}
Form \eqref{eq:psssA}, \eqref{eq:psssB} and
$P\big( v_2 \in R_2^{-1} Q\big) = P\big( R_2 v_2 \in Q\big)$, we get
\begin{equation} 
P\big( R_1 v_1 \in Q\big)  \le P\big( R_2 v_2 \in Q\big),
\end{equation}
contradicting the assumption $P\big( R_1 v_1 \in Q\big) > P\big( R_2 v_2 \in Q\big)$.
This proves the Lemma.
\end{proof}

Recalling eq.\ \eqref{eq:xlin2}, it is useful to define the following zero mean processes, for $i=1,2$,
\begin{equation} \label{eq:defxi}
\begin{aligned}
\xi_i(t) & =  \xb_i\vert_{x_i^0,\ub_i}(t) - \xb_i(t,x^0_i,\ub_i,\mub_i)\\
         & = \Reach_t (A_i,G_i)\wbtilde_i\vert_{0:t-1}
\end{aligned}
\end{equation}
which has the same covariances of $\xb_1|_{x_1^0,\ub_1}$ reported in \eqref{eq:covxprocess}
i.e., for $t\ge \tau$
\begin{equation}
\cov\big(\xi_i(t),\xi_i(\tau)\big)
       =\sum_{h=0}^{\tau-1} A_i^{t-\tau+h} G_iG_i^T (A_i^{h})^T.   \label{eq:covxiprocess}
\end{equation}
We also have, for any $x_i^0\in\Real^{n_1}$, $\ub$ and $t\ge0$,
\begin{equation} \label{eq:supporti}
\begin{aligned}
\supp(\xi_i(t)) & =\im(\Reach_t (A_i,G_i )),\\
\supp\big(\xb_i|_{x^0_i,\ub_i}(t)\big) & =\xb_i(t,x^0_i,\ub_i,\mub_i)+\im(\Reach_t (A_i,G_i )).  
\end{aligned}
\end{equation}

\begin{lemma} \label{prop:stochbisim}
Consider a stochastic linear control system $\Sigma_i$ as in (\ref{eq:systemSi}) and a matrix $R_i \in \Real^{r\times n_i}$. 
Let $A_i,G_i,R_i$ be such that
\begin{equation}
\label{condgio0}
\im(\Reach(A_i,G_i )) \cap  \ker(R_i)=\{0\}. 
\end{equation}
Then, 
\[
\mathbf{P}(R_i \xb_i|_{x^0_i,\ub_i}(t)\in R_i \overline{X}_i|x^0_i)=
\mathbf{P}(    \xb_i|_{x^0_i,\ub_i}(t)\in     \overline{X}_i|x^0_i),
\] 
for any time $t \in \mathbb{N}$, initial condition $x^0_i \in\Real^{n_i}$, control input sequence $\ub_i$, 
and any measurable set 
\begin{equation}
\label{condLemma}
\overline{X}_i \subseteq \supp\big(\xb_i|_{x^0_i,\ub_i}(t)\big).
\end{equation}
\end{lemma}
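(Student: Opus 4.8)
The plan is to exploit the fact that hypothesis \eqref{condgio0} makes $R_i$ injective along the directions spanning the conditional support of the state process, so that passing from $\xb_i|_{x^0_i,\ub_i}(t)$ to $R_i\xb_i|_{x^0_i,\ub_i}(t)$ loses no information about the event $\{\xb_i|_{x^0_i,\ub_i}(t)\in\overline{X}_i\}$ once we restrict to the support. Write $m_i(t)=\xb_i(t,x^0_i,\ub_i,\mub_i)$ for the conditional mean and $V_i(t)=\im(\Reach_t(A_i,G_i))$, so that by \eqref{eq:supporti} the conditional support of $\xb_i|_{x^0_i,\ub_i}(t)$ is the affine subspace $m_i(t)+V_i(t)$.

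The first step is to verify that $V_i(t)\cap\ker(R_i)=\{0\}$ for \emph{every} $t\in\mathbb{N}$, not merely for $t\ge n_i$. Indeed, $\Reach_t(A_i,G_i)$ is built from the first $t$ block-columns of $\Reach(A_i,G_i)$ when $t\le n_i$, and has the same image as $\Reach(A_i,G_i)$ for $t\ge n_i$ by Cayley--Hamilton; in all cases $V_i(t)\subseteq\im(\Reach(A_i,G_i))$, so $V_i(t)\cap\ker(R_i)\subseteq\im(\Reach(A_i,G_i))\cap\ker(R_i)=\{0\}$ by \eqref{condgio0}. Hence the restriction of the affine map $x\mapsto R_i x$ to the support $m_i(t)+V_i(t)$ is injective.

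The second step rewrites the left-hand probability at the level of events, which also avoids any concern about the measurability of $R_i\overline{X}_i$. For any $z$ one has $R_i z\in R_i\overline{X}_i$ if and only if $z\in R_i^{-1}(R_i\overline{X}_i)$, and $R_i^{-1}(R_i\overline{X}_i)=\overline{X}_i+\ker(R_i)$; thus the events $\{R_i\xb_i|_{x^0_i,\ub_i}(t)\in R_i\overline{X}_i\}$ and $\{\xb_i|_{x^0_i,\ub_i}(t)\in\overline{X}_i+\ker(R_i)\}$ coincide. Applying the support identity \eqref{supporto} to both events and using the standing hypothesis \eqref{condLemma} that $\overline{X}_i\subseteq m_i(t)+V_i(t)$, it remains to establish the set identity
\[
\big(\overline{X}_i+\ker(R_i)\big)\cap\big(m_i(t)+V_i(t)\big)=\overline{X}_i .
\]

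The inclusion $\supseteq$ is immediate because $0\in\ker(R_i)$ and $\overline{X}_i\subseteq m_i(t)+V_i(t)$. For $\subseteq$, take $y$ in the left-hand set: then $y=x+k$ with $x\in\overline{X}_i$ and $k\in\ker(R_i)$, while both $y$ and $x$ lie in $m_i(t)+V_i(t)$, so $k=y-x\in V_i(t)$; by the first step $k\in V_i(t)\cap\ker(R_i)=\{0\}$, whence $y=x\in\overline{X}_i$. Combining the two steps yields $\mathbf{P}(R_i\xb_i|_{x^0_i,\ub_i}(t)\in R_i\overline{X}_i\,|\,x^0_i)=\mathbf{P}(\xb_i|_{x^0_i,\ub_i}(t)\in\overline{X}_i\,|\,x^0_i)$, as claimed. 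I expect the only genuine obstacle to be the uniform-in-$t$ injectivity of the first step; the remainder is elementary set algebra combined with \eqref{supporto}.
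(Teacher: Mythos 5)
Your proof is correct and follows essentially the same route as the paper's: both reduce the claim to the set-theoretic fact that $\bigl(\overline{X}_i+\ker(R_i)\bigr)\cap\supp\bigl(\xb_i|_{x^0_i,\ub_i}(t)\bigr)=\overline{X}_i$, which holds because the transversality condition \eqref{condgio0} forces any $\ker(R_i)$--increment between two support points to vanish (the paper phrases this after centering the process as $\xi_i(t)$ and shifting $\overline{X}_i$ to $H_i$, while you keep the affine support $m_i(t)+V_i(t)$ directly). Your explicit check that $V_i(t)\cap\ker(R_i)=\{0\}$ for all $t$, not just $t\ge n_i$, and your careful justification of the inclusion $\subseteq$ via $k=y-x\in V_i(t)$ make precise two steps the paper leaves slightly implicit.
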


\begin{proof}
Pick any set $\overline{X}_i \subset\Real^{n_i}$ satisfying (\ref{condLemma}).
and define the shifted set $H_i = \overline{X}_i - \xb_i(t,x^0_i,\ub_i,\mub_i)$.
By \eqref{eq:supporti}  $H_i \subseteq \im(\Reach_t (A_i,G_i )) \subseteq \im(\Reach(A_i,G_i ))$, so that 
$\xb_i\vert_{x_i^0,\ub_i}(t) \in \overline{X}_i$ if and only if $\xi_i(t)\in H_i$ and 
$\mathbf{P}(\xb_i\vert_{x_i^0,\ub_i}(t)\in \overline{X}_i| x_i^0) = \mathbf{P}(\xi_i(t)\in H_i)$. 
The thesis is proven if we show that $\mathbf{P}(R_i\xi_i(t)\in R_i H_i ) = \mathbf{P}(\xi_i(t)\in H_i)$. 
This is straightforward because $R_i\xi_i(t)\in R_i H_i$ implies $\xi_i(t)\in (H_i + \kr(R_i))$; by construction $\xi_i(t)\in 
\im(\Reach_t (A_i,G_i )) \subseteq \im(\Reach(A_i,G_i ))$ and therefore by (\ref{condgio0}), $\xi_i(t)\not\in \kr(R_i)\backslash \{0\}$. Thus, $\xi_i(t) \in (H_i + \kr(R_i))$ implies $\xi_i(t)\in H_i$, which concludes the proof.
\end{proof}

\subsection{Geometric conditions for stochastic external behavior equivalence}

This section collects some algebraic and geometric conditions that characterize stochastic external equivalence relations.

\begin{proposition}  \label{prop:necessequiv1}
If two stochastic systems $\Sigma_1$ and $\Sigma_2$, as in (\ref{eq:systemSi})
have equivalent stochastic external behavior with respect to some subspace relation $\Rcal\subseteq \Real^{n_1+n_2}$,
then 
\[
\hspace{-3pt}
\begin{aligned}
& p_0) & & \Psi_1=\Psi_2,\\
& p_1) & & C_1 A_1^t B_1 = C_2 A_2^t B_2, \qquad\quad \forall t\ge 0, \\
& p_2) & & C_1 A_1^t G_1\mu_1 = C_2 A_2^t G_2\mu_2, \qquad \forall t\ge 0, \\
& p_3) & & \hspace{-4pt} C_1 A_1^k G_1 G_1^T\! (A_1^h)^T \! C_1^T \!=\! 
                         C_2 A_2^k G_2 G_2^T\! (A_2^h)^T \! C_2^T, \, \forall h,k\ge 0.
\end{aligned}
\]
\end{proposition}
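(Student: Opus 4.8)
The plan is to exploit the fact that both output processes $\yb_i\vert_{x_i^0,\ub}$ are Gaussian---being affine images of the jointly Gaussian noise sequences $(\wb_i,\nub_i)$---so that the stochastic equivalence $\yb_1\vert_{x_1^0,\ub}\sim\yb_2\vert_{x_2^0,\ub}$ demanded by Definition \ref{def:ExtEquiv} is \emph{equivalent} to the coincidence of the two conditional mean functions \eqref{eq:yimean} and of the two conditional covariance functions \eqref{eq:covyprocess}, for every choice of times. Since $\Rcal$ is a subspace it contains the origin, so I may instantiate the hypothesis at the single pair $(x_1^0,x_2^0)=(0,0)\in\Rcal$; this already suffices, because the covariance \eqref{eq:covyprocess} is independent of $x_i^0$ and of $\ub$, while the mean can be probed by letting $\ub$ range over all inputs. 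Reading off the covariance at $t=\tau=0$ (where the reachability sum is empty) gives $\cov(\yb_i\vert_{0,\ub}(0))=\Psi_i$, so moment matching immediately forces $p_0$, namely $\Psi_1=\Psi_2$.

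Next, $p_1$ and $p_2$ come from the mean. With $x_i^0=0$, formula \eqref{eq:yimean} reads $E\{\yb_i\vert_{0,\ub}(t)\}=\sum_{\tau=0}^{t-1} C_i A_i^{t-1-\tau}(B_i u(\tau)+G_i\mu_i)$, and this must agree for $i=1,2$, for all $t$ and all inputs $\ub$. Setting $\ub=\zerob$ leaves the drift $\sum_{k=0}^{t-1} C_i A_i^{k} G_i\mu_i$; equating for all $t$ and subtracting the identity at $t$ from the one at $t+1$ telescopes to $p_2$. Subtracting this $\ub=\zerob$ contribution from the general identity leaves $\sum_{\tau=0}^{t-1} C_i A_i^{t-1-\tau} B_i u(\tau)$ equal for $i=1,2$ and all inputs; since $\ub$ is arbitrary (e.g.\ taking impulses $u(\tau)=e_j\delta_{\tau,s}$), the coefficient of each $u(\tau)$ must match, which is exactly $p_1$.

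Finally, $p_3$ follows from the covariance. Having established $\Psi_1=\Psi_2$, the noise terms cancel upon matching \eqref{eq:covyprocess}, so for all $t\ge\tau$
\[
\sum_{h=0}^{\tau-1} C_1 A_1^{t-\tau+h} G_1 G_1^T (A_1^{h})^T C_1^T=\sum_{h=0}^{\tau-1} C_2 A_2^{t-\tau+h} G_2 G_2^T (A_2^{h})^T C_2^T.
\]
Writing $M_i(k,h)=C_iA_i^kG_iG_i^T(A_i^h)^TC_i^T$ and fixing the gap $d=t-\tau\ge0$, incrementing $\tau$ by one adds to each side exactly the single term $M_i(d+\tau,\tau)$; differencing therefore isolates $M_1(d+\tau,\tau)=M_2(d+\tau,\tau)$, i.e.\ $p_3$ for all $k\ge h$. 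The remaining range $k<h$ follows by transposition, since $M_i(k,h)^T=M_i(h,k)$.

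The main obstacle is the two-index bookkeeping in the last step: one must verify that varying $t$ and $\tau$ independently, and differencing in the lower index at fixed gap $d$, cleanly peels off the individual matrices $M_i(k,h)$ rather than only their partial sums, and that the white-noise structure confines $\Psi_i$ to the diagonal $t=\tau$ so that $p_0$ and $p_3$ decouple. Everything else reduces to the single observation that, for Gaussian processes, $\sim$ is nothing more than equality of the first two moments.
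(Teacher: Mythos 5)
Your proof is correct and follows essentially the same route as the paper, which simply asserts that all four conditions follow from the mean formula \eqref{eq:yimean} evaluated at $x_i^0=0$ and the covariance formula \eqref{eq:covyprocess}; you supply the details (instantiating at $(0,0)\in\Rcal$, probing with impulse inputs, and differencing the covariance sums at fixed gap $t-\tau$) that the paper leaves implicit.
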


\begin{proof}
All the listed conditions are a straightforward consequence of the formula \eqref{eq:yimean} of expected values 
of the output processes $y_i\big|_{x_i^0,\ub_i}$ for any ($x_i^0,\ub_i$), used with $x_i^0=0$, 
and of the formula \eqref{eq:covyprocess} of covariances.
\end{proof}
In order to derive equivalent geometric conditions we find it useful to define the following extended system $\Sigmatilde$ of dimension $\ntilde=n_1+n_2$
\begin{equation}  \label{eq:extsys}
\begin{aligned}
\Atilde & \!=\! \diag(A_1,A_2)\!=\!\begin{bmatrix} A_1 & 0 \\ 0 & A_2 \end{bmatrix},\ 
						\Btilde=\col(B_1,B_2)=\begin{bmatrix} B_1 \\ B_2 \end{bmatrix},\\
\Ctilde& \!=\! \row(C_1,-C_2)=\begin{bmatrix} C_1 & -C_2 \end{bmatrix}.
\end{aligned}
\end{equation}
and the following matrices:
\[
Q_{i,k} = \Obs_{k}(A_i,C_i),\quad 
P_{i,k} = \Reach_{k}(A_i,B_i),\quad i=1,2.
\]
Now we can state the following geometric conditions:

\begin{proposition}  \label{prop:necessequiv2}
Consider two stochastic systems $\Sigma_1$ and $\Sigma_2$, as in (\ref{eq:systemSi}).
The following conditions:
\[
\begin{aligned}
& p_1')  & &  \im\begin{bmatrix} B_1 \\ B_2 \end{bmatrix} \subseteq
            \ker\begin{bmatrix} Q_{1,\ntilde} & -Q_{2,\ntilde} \end{bmatrix}\\
& p_2') & &  \im\begin{bmatrix} G_1\mu_1 \\ G_2\mu_2 \end{bmatrix} \subseteq
            \ker\begin{bmatrix} Q_{1,\ntilde} & -Q_{2,\ntilde} \end{bmatrix}\\
& p_3') & & \exists H\in \Real^{l_1\times l_2}: \ 
\im\begin{bmatrix} G_1 H \\ G_2 \end{bmatrix} \subseteq
            \ker\begin{bmatrix} Q_{1,\ntilde} & -Q_{2,\ntilde} \end{bmatrix}
\end{aligned}
\]
are necessary for the stochastic external behavior equivalence of systems $\Sigma_1$ and $\Sigma_2$
with respect to some relation $\Rcal$.
\end{proposition}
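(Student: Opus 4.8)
The plan is to recast the three geometric conditions in terms of the observability matrix of the extended system $(\Atilde,\Ctilde)$ of \eqref{eq:extsys} and then to read them off from conditions $p_1$, $p_2$, $p_3$ of Proposition~\ref{prop:necessequiv1}, which are already known to be necessary for stochastic external behavior equivalence. The first step I would carry out is the block identity
\[
\begin{bmatrix} Q_{1,\ntilde} & -Q_{2,\ntilde} \end{bmatrix}
= \begin{bmatrix} \Ctilde \\ \Ctilde\Atilde \\ \vdots \\ \Ctilde\Atilde^{\,\ntilde-1} \end{bmatrix}
= \Obs_{\ntilde}(\Atilde,\Ctilde),
\]
which is immediate from $\Atilde=\diag(A_1,A_2)$, $\Ctilde=\row(C_1,-C_2)$ and hence $\Ctilde\Atilde^{t}=\row(C_1A_1^{t},\,-C_2A_2^{t})$. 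The consequence I would use repeatedly is that for $v=\col(v_1,v_2)$ one has $v\in\ker[\,Q_{1,\ntilde}\ -Q_{2,\ntilde}\,]$ exactly when $Q_{1,\ntilde}v_1=Q_{2,\ntilde}v_2$, i.e.\ when $C_1A_1^{t}v_1=C_2A_2^{t}v_2$ for $t=0,\dots,\ntilde-1$.

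With this dictionary, $p_1'$ is literally the statement $\im\Btilde\subseteq\ker[\,Q_{1,\ntilde}\ -Q_{2,\ntilde}\,]$, which unwinds columnwise to $Q_{1,\ntilde}B_1=Q_{2,\ntilde}B_2$, i.e.\ $C_1A_1^{t}B_1=C_2A_2^{t}B_2$ for $t=0,\dots,\ntilde-1$; this is an immediate restriction of $p_1$, which Proposition~\ref{prop:necessequiv1} guarantees for all $t\ge0$. In exactly the same way $p_2'$ reduces to $C_1A_1^{t}G_1\mu_1=C_2A_2^{t}G_2\mu_2$ for $t=0,\dots,\ntilde-1$ and follows from $p_2$. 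Thus $p_1'$ and $p_2'$ cost no real work beyond the translation.

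The substantive step is $p_3'$. Setting $\mathcal{M}_i=Q_{i,\ntilde}G_i$, the $(k,h)$ block of $\mathcal{M}_i\mathcal{M}_i^{T}$ is $C_iA_i^{k}G_iG_i^{T}(A_i^{h})^{T}C_i^{T}$, so the restriction of $p_3$ to $0\le h,k\le\ntilde-1$ is exactly the Gram identity $\mathcal{M}_1\mathcal{M}_1^{T}=\mathcal{M}_2\mathcal{M}_2^{T}$. I would then invoke the elementary fact that $\im(\mathcal{M}_i)=\im(\mathcal{M}_i\mathcal{M}_i^{T})$ for real matrices: since $\mathcal{M}_i\mathcal{M}_i^{T}x=0$ forces $\Vert\mathcal{M}_i^{T}x\Vert^{2}=0$, one has $\ker(\mathcal{M}_i\mathcal{M}_i^{T})=\ker(\mathcal{M}_i^{T})$, and taking orthogonal complements gives $\im(\mathcal{M}_i\mathcal{M}_i^{T})=\im(\mathcal{M}_i)$. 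The Gram identity then yields $\im(\mathcal{M}_2)=\im(\mathcal{M}_1)$; in particular each column of $\mathcal{M}_2$ lies in $\im(\mathcal{M}_1)$, so there exists $H\in\Real^{l_1\times l_2}$ with $\mathcal{M}_2=\mathcal{M}_1 H$, that is $Q_{1,\ntilde}G_1 H=Q_{2,\ntilde}G_2$. This is precisely $\im\col(G_1H,G_2)\subseteq\ker[\,Q_{1,\ntilde}\ -Q_{2,\ntilde}\,]$, namely $p_3'$.

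The main obstacle is this last step: unlike $p_1'$ and $p_2'$, the intertwining matrix $H$ is not handed to us but must be constructed, and the only genuinely non-bookkeeping ingredient is the image-equality lemma feeding the passage from the quadratic datum $p_3$ to the relation $\mathcal{M}_2=\mathcal{M}_1H$. I would be careful to claim only the \emph{existence} of $H$, which is neither unique nor necessarily square, since the noise dimensions $l_1$ and $l_2$ may differ. Finally I would note that restricting the index range to $t<\ntilde$ loses nothing: because $\Atilde^{k}$ lies in the span of $I,\Atilde,\dots,\Atilde^{\,\ntilde-1}$ for every $k$ (Cayley--Hamilton on the extended system), with the same coefficients on both diagonal blocks $A_1^{k}$, $A_2^{k}$, the finite conditions $p_1',p_2',p_3'$ are in fact equivalent to the infinite families $p_1,p_2,p_3$, not merely implied by them.
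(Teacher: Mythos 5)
Your proposal is correct and follows essentially the same route as the paper: identifying $[\,Q_{1,\ntilde}\ -Q_{2,\ntilde}\,]$ with $\Obs_{\ntilde}(\Atilde,\Ctilde)$, reducing $p_1'$ and $p_2'$ columnwise to $p_1$ and $p_2$, and obtaining $H$ in $p_3'$ from the Gram identity $Q_{1,\ntilde}G_1G_1^TQ_{1,\ntilde}^T=Q_{2,\ntilde}G_2G_2^TQ_{2,\ntilde}^T$ via $\im(MM^T)=\im(M)$. You merely fill in details the paper leaves implicit (the kernel/orthogonal-complement argument for the image equality, and the Cayley--Hamilton remark showing the finite and infinite index ranges are equivalent), which is consistent with the paper's claim that each $p_i'$ is in fact equivalent to $p_i$.
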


\begin{proof}
The proof is based on the observability analysis of the extended systems \eqref{eq:extsys},
and is obtained by showing that each condition ($p'_i$), $i=1,2,3$, is equivalent to the
corresponding condition ($p_i$) of Proposition \ref{prop:necessequiv1}.
By observing that:
\[
\begin{aligned}
\begin{bmatrix} Q_{1,\ntilde} & -Q_{2,\ntilde} \end{bmatrix}\! =\! \Obs_{\ntilde}(\Atilde,\Ctilde),
\end{aligned}
\]
condition $(p'_1)$ can be written as $\im(\Btilde)\subseteq  \Obs_{\ntilde}(\Atilde,\Ctilde)$,
and this implies that $\Ctilde \Atilde^t \Btilde =0$,$\forall t\ge0$, that is equivalent to $(p_1)$
of Proposition \ref{prop:necessequiv1} thanks to the block diagonal structure of system $\Sigmatilde$ 
\eqref{eq:extsys}.
A similar reasoning proves that $(p'_2)$ is equivalent to $(p_2)$.
 Equivalence of ($p_3'$) to ($p_3$) is proved by observing that ($p_3$) is equivalent
to 
\[
Q_{1,\ntilde} G_1 G_1^T Q_{1,\ntilde}^T = Q_{2,\ntilde} G_2 G_2^T Q_{2,\ntilde}^T,
\]
which is equivalent to $\im(Q_{1,\ntilde} G_1)=\im(Q_{2,\ntilde} G_2)$, and this in turn is equivalent
to $Q_{1,\ntilde} G_1 H = Q_{2,\ntilde} G_2$ for some matrix $H$.
From this, equivalence of ($p_3$) and ($p_3'$) easily follows.
\end{proof}

Note that the algebraic conditions $(p_0)$--$(p_3)$ of Proposition \ref{prop:necessequiv1}
and the geometric conditions $(p_1')$--$(p_3')$ of Proposition \ref{prop:necessequiv2}
do not depend on the choice of the relation $\Rcal$. The following results provide necessary and sufficient conditions on the subspace relation $\Rcal\subseteq\Real^{n_1+n_2}$
that ensure stochastic external equivalence of two systems $\Sigma_1$ and $\Sigma_2$.

\begin{theorem}   \label{thm:condRequiv}
Stochastic control systems $\Sigma_1$ and $\Sigma_2$ have equivalent stochastic external behavior with respect to $\Rcal$ 
if and only if they satisfy conditions $(p_0)$--$(p_3)$ of Proposition \ref{prop:necessequiv1}
(or the equivalent ones in Proposition \ref{prop:necessequiv2}) and $\Rcal$ satisfies the following one:
\begin{equation} \label{eq:condRincl} 
p_4)\qquad
\Rcal\subseteq \ker(\begin{bmatrix} Q_{1,\ntilde} & -Q_{2,\ntilde} \end{bmatrix}).\hspace{2cm}
\end{equation}
Moreover, there always exists a $\diag(A_1,A_2)$-invariant subspace relation $\Rcal'\supseteq\Rcal$
such that $\Sigma_1$ and $\Sigma_2$ have equivalent stochastic external behavior with respect to $\Rcal'$.
\end{theorem}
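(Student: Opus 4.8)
The plan is to prove Theorem \ref{thm:condRequiv} in two parts: first the biconditional characterizing when a \emph{fixed} relation $\Rcal$ yields external equivalence, and then the existence of a $\diag(A_1,A_2)$-invariant relation $\Rcal'\supseteq\Rcal$ with the same property. For the biconditional, I would work directly from the output process decomposition \eqref{eq:ylin2}. Fix $(x_1^0,x_2^0)\in\Rcal$, so $R_1 x_1^0 = R_2 x_2^0$. Since each $\yb_i\vert_{x_i^0,\ub}$ is a Gaussian process, the requirement $\yb_1\vert_{x_1^0,\ub}\sim\yb_2\vert_{x_2^0,\ub}$ is equivalent to equality of all finite-dimensional conditional means (from \eqref{eq:yimean}) and all conditional covariances (from \eqref{eq:covyprocess}). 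The covariance equality is exactly condition $(p_3)$ and does not involve $\Rcal$; the noise-output coupling $\Psi_1=\Psi_2$ is $(p_0)$. The mean equality, using \eqref{eq:yimean} at $x_i^0=0$, forces the input term $(p_1)$ and the drift term $(p_2)$, again independent of $\Rcal$. What remains is the contribution of the initial conditions, namely the term $C_i A_i^t x_i^0$ for all $t$, which must agree for the two systems; compactly this says $Q_{1,\ntilde} x_1^0 = Q_{2,\ntilde} x_2^0$ for every $(x_1^0,x_2^0)\in\Rcal$, i.e.\ $\Rcal\subseteq\ker(\begin{bmatrix} Q_{1,\ntilde} & -Q_{2,\ntilde}\end{bmatrix})$, which is precisely $(p_4)$.

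So the first half reduces to a careful translation between the statement ``$\yb_1\vert_{x_1^0,\ub}\sim\yb_2\vert_{x_2^0,\ub}$ for all $(x_1^0,x_2^0)\in\Rcal$ and all $\ub$'' and the conjunction $(p_0)$--$(p_4)$. The forward direction is essentially Proposition \ref{prop:necessequiv1} supplemented by the $(p_4)$ computation above. For the converse, I would observe that $(p_0)$--$(p_3)$ force agreement of means and covariances for the $x_i^0=0$ responses, that $(p_4)$ upgrades this to agreement for arbitrary related initial conditions (since the only remaining discrepancy in the mean is $(Q_{1,\ntilde} x_1^0 - Q_{2,\ntilde} x_2^0)$, killed by $(p_4)$), and that two Gaussian processes with identical finite-dimensional means and covariances are stochastically equivalent. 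Here I must make sure to invoke the Cayley--Hamilton reduction, noted after \eqref{eq:covxprocess}, that taking $t,\tau\le\ntilde$ in $Q_{i,\ntilde}$ captures all relevant powers $A_i^t$, so that finitely many matrix conditions suffice for the ``for all $t$'' quantifiers.

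For the second statement, the plan is to enlarge $\Rcal$ to the largest $\Atilde$-invariant subspace contained in $\ker(\Ctilde)=\ker(\begin{bmatrix} C_1 & -C_2\end{bmatrix})$ that still contains $\Rcal$. Concretely, let $\Rcal' = \langle \Atilde \mid \Rcal\rangle$ be the smallest $\Atilde$-invariant subspace containing $\Rcal$; I would argue $\Rcal'\subseteq\ker(\begin{bmatrix} Q_{1,\ntilde} & -Q_{2,\ntilde}\end{bmatrix})$ as follows. By $(p_4)$ we have $\Rcal\subseteq\ker(\Obs_{\ntilde}(\Atilde,\Ctilde))$, and $\ker(\Obs_{\ntilde}(\Atilde,\Ctilde))$ is exactly the unobservable subspace of the extended system $\Sigmatilde$, which is the \emph{largest} $\Atilde$-invariant subspace inside $\ker(\Ctilde)$. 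Since this unobservable subspace is already $\Atilde$-invariant and contains $\Rcal$, it contains $\Rcal'$, so $\Rcal'$ satisfies $(p_4)$ as well; it is a subspace by construction. Because $(p_0)$--$(p_3)$ are properties of the systems alone, the already-established biconditional then gives external equivalence with respect to $\Rcal'$, and $\Rcal'$ is $\diag(A_1,A_2)$-invariant by construction.

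The main obstacle I anticipate is neither algebraic identity nor the invariance argument, but the rigorous passage between process-level stochastic equivalence and the finitely many matrix conditions. Two subtleties deserve care: first, that the conditions $(p_1)$--$(p_3)$ stated ``$\forall t\ge 0$'' (or $\forall h,k\ge 0$) are genuinely equivalent to their truncated, $\ntilde$-dimensional observability/reachability forms via Cayley--Hamilton, so that the ``always exists'' geometric conditions really do encode the infinite family; and second, that equality of conditional means and covariances is both necessary and sufficient for $\sim$ between Gaussian processes, which is where degeneracy of the disturbances could superficially look problematic but in fact causes no trouble because the Gaussian law is determined by its mean and (possibly singular) covariance regardless of degeneracy. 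I would therefore foreground the Gaussian characterization and the Cayley--Hamilton truncation as the two load-bearing reductions, and keep the matrix manipulations brief since they mirror the computations already displayed in Propositions \ref{prop:necessequiv1} and \ref{prop:necessequiv2}.
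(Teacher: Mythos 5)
Your proposal is correct and follows essentially the same route as the paper: the biconditional is obtained by reducing stochastic equivalence of the Gaussian output processes to equality of conditional means and covariances, with $(p_0)$--$(p_3)$ accounting for the system-dependent terms and $(p_4)$ for the initial-condition term $C_iA_i^t x_i^0$, exactly as in the paper's argument. The only (harmless) deviation is in the second assertion, where the paper simply takes $\Rcal'=\ker(\begin{bmatrix} Q_{1,\ntilde} & -Q_{2,\ntilde}\end{bmatrix})$ outright, while you take the smallest $\diag(A_1,A_2)$-invariant subspace containing $\Rcal$ and observe it sits inside that same kernel --- both choices rest on the same invariance fact and both satisfy the statement.
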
 

\begin{proof}
The first assertion is proved by noting that condition \eqref{eq:condRincl} is equivalent to 
\begin{equation} 
C_1 A_2^t x_1^0 = C_2 A_2^t x_2^0, \quad \forall t\ge 0, \quad \forall (x_1^0,x_2^0)\in\Rcal,
\end{equation}
which is clearly necessary and sufficient, together with ($p_1$) and ($p_2$) of Proposition
\ref{prop:necessequiv1}, to guarantee that the processes $y_1|_{x_1^0,\ub_1}$ 
and $y_2|_{x_2^0,\ub_2}$ have the same expected values (see equation \eqref{eq:yimean}). 
The second assertion is easily proved by observing that it is trivially verified
by choosing $\Rcal'= \ker(\begin{bmatrix} Q_{1,\ntilde} & -Q_{2,\ntilde} \end{bmatrix})$,
which is clearly $\diag(A_1,A_2)$-invariant. 
\end{proof}

\begin{corollary} \label{cor:maximalRequiv}
If $\Sigma_1$ and $\Sigma_2$ satisfy all conditions $(p_0)$--$(p_3)$ of Proposition \ref{prop:necessequiv1},
the largest relation in $\Real^{n_1 + n_2}$ that ensures equivalent stochastic external behavior is 
\begin{equation}
\label{RmaxL}
\ker(\begin{bmatrix} Q_{1,\ntilde} & -Q_{2,\ntilde} \end{bmatrix}).
\end{equation}
\end{corollary}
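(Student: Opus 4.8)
The plan is to read the result off directly from Theorem \ref{thm:condRequiv}. Under the standing hypothesis of the corollary, namely that conditions $(p_0)$--$(p_3)$ of Proposition \ref{prop:necessequiv1} hold, that theorem provides an \emph{if and only if} characterization: a subspace relation $\Rcal\subseteq\Real^{n_1+n_2}$ guarantees equivalent stochastic external behavior of $\Sigma_1$ and $\Sigma_2$ precisely when $\Rcal$ satisfies condition $(p_4)$, that is $\Rcal\subseteq\ker([Q_{1,\ntilde}\ -Q_{2,\ntilde}])$. The corollary is therefore nothing more than the observation that the collection of admissible relations has a largest element, and that this element is the kernel itself.

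First I would verify that the candidate is admissible. The set $\ker([Q_{1,\ntilde}\ -Q_{2,\ntilde}])$ is the kernel of a linear map, hence a subspace of $\Real^{n_1+n_2}$, and it trivially satisfies $(p_4)$ since it is contained in itself. Combined with the standing assumption that $(p_0)$--$(p_3)$ hold, the sufficiency direction of Theorem \ref{thm:condRequiv} then yields that $\Sigma_1$ and $\Sigma_2$ have equivalent stochastic external behavior with respect to $\ker([Q_{1,\ntilde}\ -Q_{2,\ntilde}])$.

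Next I would establish maximality. Let $\Rcal$ be any subspace relation with respect to which $\Sigma_1$ and $\Sigma_2$ have equivalent stochastic external behavior. By the necessity direction of Theorem \ref{thm:condRequiv}, such an $\Rcal$ must satisfy $(p_4)$, and hence $\Rcal\subseteq\ker([Q_{1,\ntilde}\ -Q_{2,\ntilde}])$. Thus every admissible relation is contained in $\ker([Q_{1,\ntilde}\ -Q_{2,\ntilde}])$, which, being itself admissible by the previous step, is therefore the largest relation in $\Real^{n_1+n_2}$ ensuring equivalent stochastic external behavior.

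I do not anticipate any real obstacle: the statement is essentially a repackaging of the biconditional in Theorem \ref{thm:condRequiv}, and indeed the second assertion of that theorem already singles out $\ker([Q_{1,\ntilde}\ -Q_{2,\ntilde}])$ as a $\diag(A_1,A_2)$-invariant admissible relation containing a given $\Rcal$. The only point requiring attention is to treat both halves of the word \emph{largest} separately, verifying that the candidate is admissible and that it dominates every competitor; both follow at once from the two directions of the theorem.
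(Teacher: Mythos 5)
Your proof is correct and matches the paper's (implicit) argument: the corollary is stated without proof as an immediate consequence of Theorem \ref{thm:condRequiv}, whose necessity direction forces every admissible relation into $\ker([Q_{1,\ntilde}\ -Q_{2,\ntilde}])$ and whose sufficiency direction, under $(p_0)$--$(p_3)$, shows that this kernel is itself admissible. Your care in separating the two halves of ``largest'' (admissibility of the candidate and domination of every competitor) is exactly what the paper relies on.
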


\begin{remark} \label{rem:condstochextequiv}

Theorem \ref{thm:condRequiv} implies that if 
$\Sigma_1$ and $\Sigma_2$ have equivalent stochastic external behavior with respect to a relation
$\Rcal$, either $\Rcal$ is $\diag(A_1,A_2)$-invariant or is contained in a larger relation $\Rcal'$
which is $\diag(A_1,A_2)$-invariant.
Corollary \ref{cor:maximalRequiv} states that the maximal relation that ensures stochastic
external equivalence is $\ker(\Obs_\ntilde(\Atilde,\Ctilde))$.
\end{remark}

The following result then easily follows:

\begin{corollary}
\label{cor:Lequiv}
Linear systems $\Sigma_1$ and $\Sigma_2$ have equivalent stochastic external behavior if and only if conditions ($p_0$)--($p_3$) hold and relation (\ref{RmaxL}) is total.
\end{corollary}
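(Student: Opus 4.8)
The plan is to obtain this corollary as an essentially immediate consequence of Theorem \ref{thm:condRequiv} and Corollary \ref{cor:maximalRequiv}, together with a single elementary observation about total relations. Recall that by Definition \ref{def:ExtEquiv} the statement $\Sigma_1 \cong_{\ext} \Sigma_2$ means exactly that there exists a \emph{total} subspace relation $\Rcal$ with respect to which $\Sigma_1$ and $\Sigma_2$ have equivalent stochastic external behavior. Since conditions $(p_0)$--$(p_3)$ do not involve $\Rcal$ at all, the whole content of the corollary reduces to the equivalence: there exists a total subspace relation satisfying the inclusion $(p_4)$ of Theorem \ref{thm:condRequiv} if and only if the relation \eqref{RmaxL}, namely $\ker(\Obs_{\ntilde}(\Atilde,\Ctilde))$, is itself total.

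For the forward implication I would assume $\Sigma_1 \cong_{\ext} \Sigma_2$ and let $\Rcal$ be a witnessing total subspace relation. Applying Theorem \ref{thm:condRequiv} to $\Rcal$ yields at once that $(p_0)$--$(p_3)$ hold and that $\Rcal$ satisfies $(p_4)$, i.e.\ $\Rcal \subseteq \eqref{RmaxL}$; it then remains only to transfer totality from $\Rcal$ to the larger relation \eqref{RmaxL}. For the converse I would assume that $(p_0)$--$(p_3)$ hold and that \eqref{RmaxL} is total, and simply take $\Rcal$ to be \eqref{RmaxL} itself: it is a total subspace relation and it trivially satisfies $(p_4)$, so Theorem \ref{thm:condRequiv} gives equivalent stochastic external behavior with respect to $\Rcal$, and totality of $\Rcal$ then yields $\Sigma_1 \cong_{\ext} \Sigma_2$ by Definition \ref{def:ExtEquiv}. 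One may alternatively invoke Corollary \ref{cor:maximalRequiv} directly, since it already identifies \eqref{RmaxL} as the largest relation delivering external equivalence.

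The only genuine step, which I expect to be the sole point requiring an argument, is the monotonicity of totality used in the forward direction: if $\Rcal \subseteq \Rcal'$ are subspace relations and $\Rcal$ is total, then $\Rcal'$ is total. This is immediate from the definitions, since $\Rcal \subseteq \Rcal'$ gives $\Rcal'(\Real^{n_1}) \supseteq \Rcal(\Real^{n_1}) = \Real^{n_2}$ and $(\Rcal')^{-1}(\Real^{n_2}) \supseteq \Rcal^{-1}(\Real^{n_2}) = \Real^{n_1}$, whence both images are full; applying this with $\Rcal' = \eqref{RmaxL}$ closes the forward direction. An equivalent algebraic route, should one prefer, is to use Proposition \ref{prop:total}: writing $\Rcal = \ker([\,R_1 \ -R_2\,])$, totality of $\Rcal$ reads $\im(R_1) = \im(R_2)$, and the kernel inclusion $\Rcal \subseteq \eqref{RmaxL}$ forces $[\,Q_{1,\ntilde} \ -Q_{2,\ntilde}\,] = L\,[\,R_1 \ -R_2\,]$ for some matrix $L$, whence $\im(Q_{1,\ntilde}) = L\,\im(R_1) = L\,\im(R_2) = \im(Q_{2,\ntilde})$, i.e.\ \eqref{RmaxL} is total. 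Either way the proof is short: the substantive work has already been carried out in Theorem \ref{thm:condRequiv} and Corollary \ref{cor:maximalRequiv}.
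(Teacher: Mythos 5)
Your proof is correct and follows exactly the route the paper intends: the paper states this corollary without proof as an immediate consequence of Theorem \ref{thm:condRequiv} and Corollary \ref{cor:maximalRequiv}, and your argument supplies precisely the missing details. The one step you rightly single out --- that totality passes from a subspace relation $\Rcal$ to any larger subspace relation containing it, applied to $\Rcal'=\ker([\,Q_{1,\ntilde}\ -Q_{2,\ntilde}\,])$ --- is handled correctly in both of your variants (the set-theoretic one and the algebraic one via Proposition \ref{prop:total}).
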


\subsection{Geometric conditions for stochastic bisimulation equivalence}

The following result provides geometric conditions for characterizing stochastic bisimulation relations.

\begin{theorem} \label{thm:condRbisimil}  
Consider two stochastic systems $\Sigma_1$ and $\Sigma_2$, as in (\ref{eq:systemSi}),
such that $\Psi_1=\Psi_2$ and a subspace total relation $\Rcal$ as in \eqref{eq:Rlin} enjoying the following $(A_1,A_2)$--invariance condition

\begin{equation}   \label{eq:diagA1A2invar}
h_0)\quad \diag(A_1,A_2)\, \Rcal \subseteq \Rcal. \hspace{3cm}
\end{equation}

Then, $\Rcal$ is a stochastic bisimulation relation between $\Sigma_1$ and $\Sigma_2$ if and only if the following conditions are satisfied:
	
\begin{equation}   \label{eq:cond_h1-h4}
\begin{array} {l}
h_1)\ \im\big(\col(B_1,B_2) \big) \subseteq \Rcal  \text{ (i.e.}\ R_1 B_1 = R_2 B_2\text{)}, \\
h_2)\ \im\big(\col(G_1\mu_1,G_2\mu_2) \big) \subseteq \Rcal \text{ (i.e.}\ R_1 G_1\mu_1 \!=\! R_2 G_2\mu_2\text{)}, \\
h_3)\ R_1 G_1 G_1^T R_1^T = R_2 G_2 G_2^T R_2^T,  \\
h_4)\ \Rcal\subseteq \ker(\begin{bmatrix} C_1 & -C_2 \end{bmatrix}), \\
h_5) \ \im(\Reach(A_i,G_i )) \cap  \ker(R_i)=\{0\}, \ i=1,2. 
\end{array}
\end{equation}
\end{theorem}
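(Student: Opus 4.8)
The plan is to prove both directions under the standing hypotheses that $\Rcal=\ker([R_1\ -R_2])$ is total, $(A_1,A_2)$-invariant ($h_0$), and $\Psi_1=\Psi_2$. Two intertwining relations, extracted once and for all, will drive everything. Reading $h_0$ as the kernel inclusion $\ker([R_1\ -R_2])\subseteq\ker([R_1A_1\ -R_2A_2])$, there is a matrix $M$ with $R_iA_i=MR_i$, hence $R_iA_i^k=M^kR_i$ for all $k\ge0$ and $i=1,2$. Likewise, $h_4$ read as $\Rcal\subseteq\ker([C_1\ -C_2])$ gives, by the same argument, a matrix $K$ with $C_i=KR_i$. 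These relations let me express all moments of $\Sigma_i$ through the single pair $(M,K)$ acting on $R_i$-images.

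For sufficiency I would first show the projected processes coincide, $R_1\xb_1|_{x_1^0,\ub}(t)\sim R_2\xb_2|_{x_2^0,\ub}(t)$ for every $(x_1^0,x_2^0)\in\Rcal$ and input $\ub$. Both are Gaussian, so it suffices to match conditional means and covariances; substituting $R_iA_i^k=M^kR_i$ into \eqref{eq:condmeanRix} and \eqref{eq:condcovRix} turns the mean into $M^tR_ix_i^0+\sum_h M^{t-1-h}(R_iB_iu(h)+R_iG_i\mu_i)$ and the covariance into $\sum_h M^{t-\tau+h}R_iG_iG_i^TR_i^T(M^h)^T$, which agree for $i=1,2$ exactly by $R_1x_1^0=R_2x_2^0$, $h_1$, $h_2$ and $h_3$. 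Condition (iii) is then immediate: since $\yb_i|_{x_i^0,\ub}(t)=C_i\xb_i|_{x_i^0,\ub}(t)+\nu_i(t)=KR_i\xb_i|_{x_i^0,\ub}(t)+\nu_i(t)$, the projected-process equivalence together with $C_i=KR_i$, $\Psi_1=\Psi_2$ and independence of the output noise give $\yb_1|_{x_1^0,\ub}\sim\yb_2|_{x_2^0,\ub}$. For conditions (i)--(ii) I would chain four identities: restrict to $\overline{X}_1=X_1\cap\supp(\xb_1|_{x_1^0,\ub}(t))$ by \eqref{supporto}; apply Lemma \ref{prop:stochbisim}, whose hypothesis is precisely $h_5$, to rewrite the event as $\{R_1\xb_1\in R_1\overline{X}_1\}$; pass to $\{R_2\xb_2\in R_1\overline{X}_1\}$ by the projected equivalence; and conclude with the tautology $\{R_2\xb_2\in R_1\overline{X}_1\}=\{\xb_2\in R_2^{-1}(R_1\overline{X}_1)\}$ and \eqref{eq:calcinvXi}, which identifies $R_2^{-1}(R_1\overline{X}_1)=\Rcal(\overline{X}_1)$, the right-hand side of \eqref{condI}. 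Condition (ii) is symmetric.

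For necessity, Lemma \ref{lem:R1R2processes} furnishes $R_1\xb_1|_{x_1^0,\ub}(t)\sim R_2\xb_2|_{x_2^0,\ub}(t)$ at once, and equating Gaussian moments yields $h_1$--$h_3$ by specialization: in \eqref{eq:condmeanRix} at $t=1$ with $x^0=0$, $\ub=\mathbf{0}$ one reads off $R_1G_1\mu_1=R_2G_2\mu_2$ ($h_2$), and then with $x^0=0$ and arbitrary $u(0)$ one gets $R_1B_1=R_2B_2$ ($h_1$); in \eqref{eq:condcovRix} at $t=\tau=1$ one gets $R_1G_1G_1^TR_1^T=R_2G_2G_2^TR_2^T$ ($h_3$). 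Condition $h_4$ follows from bisimulation property (iii): equality of the output means \eqref{eq:yimean} at $t=0$, $\ub=\mathbf{0}$ forces $C_1x_1^0=C_2x_2^0$ for all $(x_1^0,x_2^0)\in\Rcal$.

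The hard part is the necessity of $h_5$, because none of the $R_i$-projected identities $h_1$--$h_4$ can detect noise lying in $\ker(R_i)$; this condition genuinely comes from the full-state conditions (i)--(ii). I would argue by contradiction in the spirit of Example \ref{exampleimpl}. Suppose $h_5$ fails for $i=1$, so there is $v\neq0$ in $\im(\Reach(A_1,G_1))\cap\ker(R_1)$. Choosing $t\ge n_1$ and $(x_1^0,x_2^0,\ub)=(0,0,\mathbf{0})$, the support of $\xb_1|_{0,\mathbf{0}}(t)$ is $m_1+W$ with $W=\im(\Reach_t(A_1,G_1))=\im(\Reach(A_1,G_1))\ni v$ by \eqref{eq:supporti}, and the centered process is non-degenerate on $W$. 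Splitting $W=W_0\oplus W_1$ with $W_0=W\cap\ker(R_1)\ni v$ and $R_1|_{W_1}$ injective, I would take two boxes $X_1\subsetneq X_1'\subseteq m_1+W$ that differ only along $W_0$; then $R_1X_1=R_1X_1'$, so $\Rcal(X_1)=\Rcal(X_1')$ by \eqref{eq:calcinvXi}, whereas non-degeneracy on $W$ forces $\mathbf{P}(\xb_1|_{0,\mathbf{0}}(t)\in X_1)<\mathbf{P}(\xb_1|_{0,\mathbf{0}}(t)\in X_1')$. Since $X_1\cap\supp(\xb_1|_{0,\mathbf{0}}(t))=X_1$, condition (i) equates both left-hand sides to $\mathbf{P}(\xb_2|_{0,\mathbf{0}}(t)\in\Rcal(X_1))$, a contradiction; the case $i=2$ follows symmetrically from (ii). Throughout, images of Borel sets under the linear maps $R_i$ are analytic, hence universally measurable, so all the probabilities appearing are well defined.
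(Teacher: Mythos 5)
Your proof is correct and follows essentially the same route as the paper's: matching the Gaussian moments of the projected processes $R_i\xb_i\vert_{x_i^0,\ub}$ to get their stochastic equivalence, invoking Lemma \ref{prop:stochbisim} (via $h_5$) and the identity $\Rcal(X_1)=R_2^{-1}(R_1 X_1)$ for conditions (i)--(ii), factoring $C_i$ through $R_i$ for (iii), and proving necessity of $h_5$ by the same two-sets contradiction. Your explicit intertwining matrix $M$ with $R_iA_i=MR_i$ is a slightly cleaner way to obtain the covariance identity $(h_3')$ than the paper's auxiliary matrix $H$ with $R_1G_1H=R_2G_2$, but the argument is the same in substance.
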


\begin{proof}
(Sufficiency) 
First of all note that the properties ($h_0$), ($h_1$) and ($h_2$) imply the following 
\begin{equation}   \label{eq:QAS}
\begin{aligned}
& h_0')  &  R_1 A_1^t x_1^0    & = R_2 A_2^t x_2^0, & & \forall t\in\mathbb{N},\ \forall (x_1^0,x_2^0)\in\Rcal,  \\
& h_1')  &  R_1 A_1^t B_1      & = R_2 A_2^t B_2,   & & \forall t\in\mathbb{N}& \\
& h_2')  &  R_1 A_1^t G_1\mu_1 & = R_2 A_2^t G_2\mu_2. & & \forall t\in\mathbb{N}& \\    
\end{aligned}
\end{equation}
Property ($h_3$) is equivalent to the existence of a  matrix
$H$ such that $R_1 G_1 H = R_2 G_2 $. 
Then, by the invariance property ($h_0$) it follows that
$R_1 A_1^t G_1 H = R_2 A_2^t G_2 $, for any $t\in\mathbb{N}$ and therefore 
for all $k,h\in\mathbb{N}$
\begin{equation}
\label{eq:QAS1}
\hspace{-0.1cm}
h_3')\quad R_1 A_1^k G_1 G_1^T (A_1^h)^T R_1^T = R_2 A_2^k G_2 G_2^T (A_2^h)^T R_2^T.
\end{equation}
From $(h_0')$--$(h_3')$ it easily follows that the Gaussian processes
$R_1 \xb_1|_{x^0_1,\ub_1}$ and 
$R_2 \xb_2|_{x^0_2,\ub_1}$, when $(x^0_1,x^0_2)\in\Rcal$ and $\ub_1=\ub_2$,
have the same means \eqref{eq:condmeanRix}
and covariances \eqref{eq:condcovRix}, and therefore
\begin{equation} \label{eq:cond1}
R_1 \xb_1|_{x^0_1,\ub}\sim R_2 \xb_2|_{x^0_2,\ub},\quad \forall (x_1^0,x_2^0)\in\Rcal.
\end{equation}
For any measurable set $X_1\subseteq\Real^{n_1}$ and any $t\in\mathbb{N}$ define 
\begin{equation}
\label{sets}
\begin{array}
{l}
\overline{X}_1= X_1\cap \supp(\xb_1\vert_{x^0_1,\ub_1}(t)),\\
X_2=\Rcal(\overline{X}_1).\\
\end{array}
\end{equation}
By definition of sets $X_1$ and $\overline{X}_1$ and by (\ref{supporto}) we get for all $t\in\mathbb{N}$:
\begin{equation}   \label{cond51a}
\mathbf{P}\big(\xb_1\vert_{x^0_1,\ub_1}(t) \in  X_1 | x_1^0\big)= 
\mathbf{P}\big(\xb_1\vert_{x^0_1,\ub_1}(t) \in  \overline{X}_1 | x_1^0\big).
\end{equation}
Since by construction, set $\overline{X}_1$ satisfies condition (\ref{condLemma}), by condition ($h_5$) in the statement and Lemma \ref{prop:stochbisim}, we get for all $t\in\mathbb{N}$: 
\begin{equation}   \label{cond511}
\mathbf{P}\big(\xb_1\vert_{x^0_1,\ub_1}(t) \in \overline{X}_1 | x_1^0\big)=
\mathbf{P}\big(R_1 \xb_1\vert_{x^0_1,\ub_1}(t) \in R_1 \overline{X}_1 | x_1^0\big).
\end{equation}
Since $\Rcal$ is total, by Lemma \ref{lemmaRlin} (i), we get: 
\begin{equation}   \label{cond211}
R_1 \overline{X}_1 = R_2 X_2. 
\end{equation}
By combining \eqref{eq:cond1} and \eqref{cond211} we get for all $t\in\mathbb{N}$:
\begin{equation}  \label{cond3}
\mathbf{P}(R_1 \xb_1\vert_{x^0_1,\ub_1}(t) \in R_1 \overline{X}_1 | x^0_1)=
\mathbf{P}(R_2 \xb_2\vert_{x^0_2,\ub_2}(t) \in R_2 X_2 | x_2^0).
\end{equation}
By definition of set $X_2$ and by Lemma \ref{lemmaRlin} (iii), we get for all $t\in\mathbb{N}$:
\begin{equation} \label{cond61}
\begin{aligned}
\mathbf{P}\big(R_2 & \xb_2\vert_{x^0_2,\ub_2}(t) \in R_2 X_2 | x_2^0\big) \\
   & = \mathbf{P}\Big(\xb_2\vert_{x^0_2,\ub_2}(t) \in \big(X_2 +\ker(R_2)\big) \big| x_2^0\Big)\\
   & = \mathbf{P}\big(\xb_2\vert_{x^0_2,\ub_2}(t) \in X_2 | x_2^0\big).
\end{aligned}
\end{equation}
By combining the equalities in (\ref{cond51a}), (\ref{cond511}), (\ref{cond3}) and (\ref{cond61}), we get condition (i) of Definition \ref{def:stochbisim}. 
Condition (ii) of Definition \ref{def:stochbisim} can be shown by using a symmetric reasoning. 
In particular, while the proof of condition (i) makes use of 
Lemma \ref{prop:stochbisim}, Lemma \ref{lemmaRlin} (i) and Lemma \ref{lemmaRlin} (iii), the proof of condition (ii) makes use of Lemma \ref{prop:stochbisim}, Lemma \ref{lemmaRlin} (ii) and Lemma \ref{lemmaRlin} (iv).
Regarding condition (iii) of Definition \ref{def:stochbisim}, note that condition ($h_4$) 
implies existence of a matrix $S$ such that 
$S[R_1 \ -R_2]=[C_1 \ -C_2]$, i.e.\ $SR_1=C_1$ and $SR_2=C_2$.
  Thus, from condition \eqref{eq:cond1}, considering that $\Psi_1=\Psi_2$ implies $\nub_1\sim\nub_2$,
by the independence of $\xb_i$ and $\nub_i$ we have, for any $(x_1^0,x_2^0)\in\Rcal$ and input $\ub$,
\[
S R_1 \xb_1\vert_{x^0_1,\ub} + \nub_1 \sim S R_2 \xb_2\vert_{x^0_2,\ub} + \nub_2.
\]
Since $\yb_i\vert_{x^0_i,\ub_i} = S R_i \xb_1\vert_{x^0_i,\ub_i} + \nub_i$,
condition (iii) of Definition \ref{def:stochbisim} is satisfied.\\
(Necessity)
By Lemma \ref{lem:R1R2processes}, if $\Rcal$ is a stochastic bisimulation relation, then
for any $(x_1^0,x_2^0)\in\Rcal,$ and input $\ub$ we have
$R_1 \xb_1\big\vert_{x_1^0,\ub} (t)\sim R_2 \xb_2\big\vert_{x_2^0,\ub} (t)$, $\forall t\ge 0.$
Then, necessarily 
$E(R_1 \xb_1\big\vert_{x_1^0,\ub} (1)|x_1^0)=E(R_2 \xb_2\big\vert_{x_2^0,\ub} (1)|x_2^0)$,
that means $R_1A_1x_1^0 + R_1B_1u_1(0)+R_1G_1\mu_1 = R_2A_2x_2^0 + R_2B_2u_2(0)+R_2G_2\mu_2$ 
for any $(x_1^0,x_2^0)\in\Rcal,$ and input $\ub$. This easily implies conditions $(h_1)$ and $(h_2)$.
Moreover, equating the covariances of $R_1 \xb_1\big\vert_{x_1^0,\ub} (1)$ and
$R_2 \xb_2\big\vert_{x_2^0,\ub} (1)$ the condition $(h_3)$ follows.
Moreover, if $\Rcal$ is a stochastic bisimulation relation, then 
$\Sigma_1$ and $\Sigma_2$ have equivalent stochastic external behavior with respect to $\Rcal$
(Proposition \ref{prop:connections})
then necessarily the inclusion \eqref{eq:condRincl} holds true, which in turn implies condition $(h_4)$.
It remains to show that if $\Rcal$ is a total stochastic bisimulation relation between $\Sigma_1$ and $\Sigma_2$
then necessarily ($h_5$) is verified.
The proof is obtained by contradiction.
Let $\Rcal=\ker([R_1 \ -R_2])$ be a total stochastic bisimulation relation between $\Sigma_1$ and $\Sigma_2$. 
 We will prove that for any pair $(x_1^0,x_2^0)\in\Rcal$, i.e.\ such that $R_1x_1^0=R_2 x_2^0$, 
if $\im(\Reach(A_i,G_i )) \cap  \ker(R_i) \not = \{0\}$ for $i=1$ or $i=2$ then 
the properties (i) or (ii) of Definition \ref{def:stochbisim}
are not verified for some set $X_1$ such that $R_1X_1=R_2 \Rcal(X_1)$, 
or $X_2$ such that $R_2 X_2=R_1 \Rcal^{-1}(X_2)$, and hence $\Rcal$ is not a stochastic bisimulation relation. 
We only give the proof for $i=1$ since the case $i=2$ follows a symmetric reasoning. 
Note first that if $\im(\Reach(A_1,G_1 )) \cap \ker(R_1) \not = \{0\}$ then there exists $v\in\Real^{n_1}$, with $v\neq 0_{n_1}$ such that 
$v\in \im(\Reach_t (A_1,G_1))$ and $v\in\kr(R_1)$.
Consider now the random vector $\xi_1(t)$ defined in \eqref{eq:defxi} with $i=1$, and consider any time $t\ge n_1$.
We can always take a pair of sets $H_1\subset\im(\Reach (A_1,G_1))$ 
and $V_1\subseteq \im(v)$ (so that $V_1\subseteq \im(\Reach (A_1,G_1))\cap\kr(R_1)$) such that 
\begin{equation} \label{eq:HiVi}
\mathbf{P}(\xi_1(t)\in H_1) < \mathbf{P}\big(\xi_1(t)\in (H_1 + V_1)\big).
\end{equation}
By defining the set $X_1 = H_1 + \xb_1(t,x^0_1,\ub_1,\mub_1)$ we have
\begin{equation}
\begin{aligned}
\mathbf{P}\big(\xb_1\vert_{x^0_1,\ub_1}(t) \in X_1 | x_1^0\big) & = \mathbf{P}\big(\xi_1(t)\in H_1\big),\\
\mathbf{P}\big(\xb_1\vert_{x^0_1,\ub_1}(t) \in (X_1+ V_1)| x_1^0\big) & =
							\mathbf{P}\big(\xi_1(t)\in (H_1 + V_1)\big),
\end{aligned}
\end{equation}
so that from \eqref{eq:HiVi}
\begin{equation} \label{eq:XiVi_ineq}
\begin{aligned}
\mathbf{P}\big(\xb_1\vert_{x^0_1,\ub_1}(t) \in X_1 | x_1^0\big) 
< \mathbf{P}\big(\xb_1\vert_{x^0_1,\ub_1}(t) \in (X_1+ V_1)| x_1^0\big)
\end{aligned}
\end{equation}
Since both $H_1$ and $V_1$ belong to $\im(\Reach (A_1,G_1))$ we have 
\begin{equation} 
\begin{aligned}
X_1       & \subset \supp\big(\xb_1\vert_{x^0_1,\ub_1}(t)\big),\\
X_1 + V_1 & \subset \supp\big(\xb_1\vert_{x^0_1,\ub_1}(t)\big),
\end{aligned}
\end{equation}
so that
\begin{equation} \label{suppnec}
\begin{aligned}
\Rcal\big(X_1\cap\supp\big(\xb_1\vert_{x^0_1,\ub_1}(t)\big)\big) & = \Rcal(X_1),\\
\Rcal\big((X_1+V_1)\cap\supp\big(\xb_1\vert_{x^0_1,\ub_1}(t)\big)\big) &= \Rcal(X_1+V_1),
\end{aligned}
\end{equation}
By assumption the pair $(x_1^0,x_2^0)$ belongs to the total relation $\Rcal=\kr([R_1\ -R_2])$, so that 
by property (i) of Definition \ref{def:stochbisim} and by \eqref{suppnec} we have
\begin{equation} \label{gio1}
\begin{aligned}
\mathbf{P}\big( \xb_1\vert_{x^0_1,\ub_1} & (t)  \in X_1| x_1^0\big) \\
   & = \mathbf{P}\big( \xb_2\vert_{x^0_2,\ub_2}(t) \in \Rcal(X_1)| x_2^0 \big),\\
\mathbf{P}\big( \xb_1\vert_{x^0_1,\ub_1} & (t)  \in X_1 + V_1| x_1^0\big) \\
   & = \mathbf{P}\big( \xb_2\vert_{x^0_2,\ub_2}(t) \in \Rcal(X_1 + V_1)| x_2^0 \big),
\end{aligned}
\end{equation}
Since, by construction, $V_1\subseteq \kr(R_i)$, i.e.\ $R_1 V_1 = 0$, it follows that 
$R_1 X_1 = R_1 (X_1 + V_1)$, and from this $\Rcal(X_1)=\Rcal(X_1 + V_1)$.
 Thus, from \eqref{gio1} we get:
\begin{equation} 
\begin{aligned}
\mathbf{P}\big(\xb_1\vert_{x^0_1,\ub_1}(t)  \in X_1| x_1^0\big) =
\mathbf{P}\big(\xb_1\vert_{x^0_1,\ub_1}(t)  \in X_1 + V_1| x_1^0\big).
\end{aligned}
\end{equation}
which contradicts the inequality \eqref{eq:XiVi_ineq}.
Hence, condition ($h_5$) must necessarily be satisfied, and the Theorem is proved.
\end{proof}

\begin{remark}
Conditions ($h_1$)--($h_5$) in the above result are necessary and sufficient in the special case when relation $\mathcal{R}$ is $(A_1,A_2)$--invariant. Condition ($h_0$) is indeed not necessary. In fact, it is not difficult to construct a pair of systems $\Sigma_1$ and $\Sigma_2$ for which there exists a total relation $\mathcal{R}$ satisfying conditions ($h_1$)--($h_4$) and admitting a proper total relation subspace (which is then still a total stochastic bisimulation relation between $\Sigma_1$ and $\Sigma_2$) that is not $(A_1,A_2)$--invariant. 
However, as discussed in Section \ref{sec5}, when performing model reduction via stochastic bisimulation equivalence, 
stochastic bisimulation relations $\mathcal{R}$ involved satisfy indeed condition ($h_0$). Hence, in this respect, condition ($h_0$) is not limiting.
\end{remark}

We conclude this section with a specialization of Theorem \ref{thm:condRbisimil} to the non--degenerate case:

\begin{corollary}   \label{cor:nondeg}
Consider systems $\Sigma_1$ and $\Sigma_2$ as in \eqref{eq:systemSi}, 
and suppose that $\rank(\Reach(A_i,G_i))=n_i$, $i=1,2$ (i.e., the 
probability measure on both the state spaces is non-degenerate after $n_i$ steps).
	Then $\Sigma_1$ and $\Sigma_2$ are equivalent via stochastic bisimulation if and only if they are linearly equivalent.
\end{corollary}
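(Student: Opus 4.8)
The plan is to prove the two implications separately, with all the substance in the ``only if'' direction. The ``if'' direction needs nothing new: if $\Sigma_1\cong_{\lin}\Sigma_2$ then $\Sigma_1\cong_{\bis}\Sigma_2$ by the first item of Proposition \ref{prop:connections}, and this does not even use the non-degeneracy hypothesis. So suppose $\Sigma_1\cong_{\bis}\Sigma_2$ and fix a total stochastic bisimulation relation $\Rcal=\ker([\,R_1\ -R_2\,])$ with $R_i\in\Real^{r\times n_i}$ as in \eqref{eq:Rlin}. First I would record the consequences of bisimulation that hold \emph{without} invoking the invariance condition $(h_0)$. Since $\Sigma_1\cong_{\bis}\Sigma_2$ implies $\Sigma_1\cong_{\ext}\Sigma_2$ (Proposition \ref{prop:connections}), condition $(p_0)$ of Proposition \ref{prop:necessequiv1} yields $\Psi_1=\Psi_2$. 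Moreover, by Lemma \ref{lem:R1R2processes}, for every $(x_1^0,x_2^0)\in\Rcal$, every input $\ub$ and every $t$ one has $R_1\xb_1|_{x_1^0,\ub}(t)\sim R_2\xb_2|_{x_2^0,\ub}(t)$; as these are Gaussian processes, this is equivalent to equality of the conditional means \eqref{eq:condmeanRix} and covariances \eqref{eq:condcovRix}.

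The next step is to use non-degeneracy to fix the shape of $\Rcal$. Here $\rank(\Reach(A_i,G_i))=n_i$ means $\im(\Reach(A_i,G_i))=\Real^{n_i}$, so the necessary condition $(h_5)$ of Theorem \ref{thm:condRbisimil} (which, as in the necessity part of that proof, is forced by any total stochastic bisimulation relation, independently of $(h_0)$) collapses to $\ker(R_i)=\{0\}$, i.e. each $R_i$ has full column rank $n_i$. Totality together with Proposition \ref{prop:total} gives $\rank(R_1)=\rank(R_2)$ and $\im(R_1)=\im(R_2)$, whence $n_1=n_2=:n$. Since $R_1,R_2$ are injective with the same image, there is a unique invertible $\mathbb{T}\in\Real^{n\times n}$ with $R_1=R_2\mathbb{T}$, and $\Rcal=\{(x_1,\mathbb{T}x_1):x_1\in\Real^n\}$ is the graph of $\mathbb{T}$. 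This $\mathbb{T}$ is the candidate transformation matrix of Definition \ref{def:LinEquiv}.

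I would then read off the algebraic relations by equating means and covariances over all $t$, all inputs, and all $(x_1^0,\mathbb{T}x_1^0)\in\Rcal$, cancelling the injective $R_2$ on the left throughout. Separating in \eqref{eq:condmeanRix} the contributions of $x_i^0$, of the input, and of $G_i\mu_i$, which can be varied independently, gives $R_1A_1^tx_1^0=R_2A_2^t\mathbb{T}x_1^0$, $R_1A_1^kB_1=R_2A_2^kB_2$ and $R_1A_1^kG_1\mu_1=R_2A_2^kG_2\mu_2$; taking $t=1$ respectively $k=0$, substituting $R_1=R_2\mathbb{T}$, and cancelling $R_2$ yields $A_2=\mathbb{T}A_1\mathbb{T}^{-1}$, $B_2=\mathbb{T}B_1$ and $G_2\mu_2=\mathbb{T}G_1\mu_1$. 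The output relation comes from condition $(h_4)$, which is necessary via $\Sigma_1\cong_{\ext}\Sigma_2$ and Theorem \ref{thm:condRequiv}: it reads $C_1x_1=C_2x_2$ on $\Rcal$, i.e. $C_1=C_2\mathbb{T}$, so $C_2=C_1\mathbb{T}^{-1}$.

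The delicate point, which I expect to be the main obstacle, is the disturbance-gain identity $G_2=\mathbb{T}G_1$. Equating the conditional covariances \eqref{eq:condcovRix} at $t=\tau=1$ gives $R_1G_1G_1^TR_1^T=R_2G_2G_2^TR_2^T$, and cancelling the injective $R_2$ on both sides only delivers $\mathbb{T}G_1G_1^T\mathbb{T}^T=G_2G_2^T$; combined with $G_2\mu_2=\mathbb{T}G_1\mu_1$ this says that the transformed disturbance $\mathbb{T}G_1w_1$ and the disturbance $G_2w_2$ have the same mean and covariance, hence the same distribution $\mathcal{N}(\mathbb{T}G_1\mu_1,\mathbb{T}G_1G_1^T\mathbb{T}^T)$. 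Since the disturbance enters \eqref{eq:systemSi} only through this distribution, and since $G_iG_i^T$ does not determine $G_i$ uniquely, the passage from the covariance identity to the literal matrix equality $G_2=\mathbb{T}G_1$ required by Definition \ref{def:LinEquiv} must be justified by fixing a canonical full-column-rank factor of the disturbance covariance; I would make this reduction explicit to close the argument, after which all five relations of linear equivalence hold and $\Sigma_1\cong_{\lin}\Sigma_2$ follows.
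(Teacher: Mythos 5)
Your proposal follows essentially the same route as the paper's proof: the paper likewise invokes the necessity of condition $(h_5)$ of Theorem \ref{thm:condRbisimil} so that non-degeneracy forces $\rank(R_i)=n_i$, uses totality and Proposition \ref{prop:total} to conclude $n_1=n_2$, and sets $\mathbb{T}=R_2^{+}R_1$, which is exactly your $\mathbb{T}$ defined by $R_1=R_2\mathbb{T}$; your remark that the necessity of $(h_5)$ does not rely on the invariance hypothesis $(h_0)$ is a point the paper uses tacitly and you are right to make it explicit. Where you go beyond the paper is in actually carrying out what the paper compresses into ``verifying that $(x_1,x_2)\in\Rcal\Leftrightarrow x_2=\mathbb{T}x_1$'': your derivations of $A_2=\mathbb{T}A_1\mathbb{T}^{-1}$, $B_2=\mathbb{T}B_1$, $G_2\mu_2=\mathbb{T}G_1\mu_1$ and $C_2=C_1\mathbb{T}^{-1}$ are all correct. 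The obstacle you flag at the end is genuine and is not resolved by the paper either: equality in distribution only yields $G_2G_2^T=\mathbb{T}G_1G_1^T\mathbb{T}^T$ together with $G_2\mu_2=\mathbb{T}G_1\mu_1$, and this does not imply the literal identity $G_2=\mathbb{T}G_1$ demanded by \eqref{algequiv} (indeed $l_1$ and $l_2$ need not even be equal, and replacing $G_2$ by $-G_2$ and $\mu_2$ by $-\mu_2$ leaves the process unchanged). Since the stochastic behavior of $\Sigma_i$ depends on $G_i$ and $\mu_i$ only through the pair $(G_i\mu_i,\,G_iG_i^T)$, the corollary can only hold with linear equivalence understood modulo this ambiguity, or after normalizing $G_i$ to a canonical full-column-rank factor of $G_iG_i^T$ as you suggest; so this is an imprecision in the interface between Definition \ref{def:LinEquiv} and the corollary rather than a defect of your argument, and with that convention stated your proof is complete.
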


\begin{proof}
The sufficiency comes from Proposition \ref{prop:connections}. 
As far as the necessity, since $\rank(\Reach(A_i,G_i))=n_i$ then condition ($h_5$) 
of Theorem \ref{thm:condRbisimil} boils down to $\rank(R_i)=n_i$. 
Then, necessarily $n_i\leq r$ where $r$ is the number of rows of $R_1$ and $R_2$. 
Moreover, by assumption $\rank(R_1)=\rank(R_2)$ because $\Rcal$ is total, and therefore necessarily
$\Sigma_1$ and $\Sigma_2$ have the same dimension ($n_1=n_2$).
   Hence, the result follows by defining the nonsingular transformation matrix 
$\mathbb{T} = R_2^{+} R_1$, where $R_2^+=(R_2^T R_2)^{-1} R_2^T $ is the Moore--Penrose pseudo inverse matrix of $R_2$,
and verifying that $(x_1,x_2)\in\Rcal\ \Leftrightarrow \ x_2=\mathbb{T}x_1$.
\end{proof}

\section{Model Reduction} \label{sec5}

In this section we consider a linear system 
\begin{equation}  \label{eq:System}
\Sigma:
\left\{
\begin{aligned}
x(t+1) & = A x(t) + B u(t) + G w(t),\\
  y(t) & = C x(t) + \nu(t),\\
x \in\Real^{n}, & \ u \in \Real^{m},\, w \in \Real^{l},\, y,\nu \in \Real^{p},
\end{aligned}
\right.
\end{equation}
with $w \sim \mathcal{N}(\mu,I_l )$, and we investigate the construction of a pair of linear systems of smaller, 
possibly minimal, dimension in the state space which have the same stochastic external behavior of, 
and respectively, is equivalent via stochastic bisimulation to $\Sigma$. 
     In the sequel we follow standard practice, see e.g. \cite{ModelChecking} for concurrent processes, \cite{BisimSchaft} for control systems and \cite{PolaIJC06} for switching control systems, and consider relations involved in Definitions \ref{def:ExtEquiv} and \ref{def:stochbisim} that are also equivalence relations on the set of states of $\Sigma$, so that it is
possible to define the quotient of system $\Sigma$ induced by these equivalence relations. 
To this purpose the following result is useful.

\begin{proposition}
\label{propRequiv}
A total relation $\Rcal=\ker(\begin{bmatrix} R_{1} & -R_{2} \end{bmatrix})$ is an equivalence relation on $\mathbb{R}^n$ if and only if $R_1 =R_2$.
\end{proposition}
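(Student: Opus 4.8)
The plan is to use the observation that an equivalence relation must in particular be reflexive, which pins down $R_1=R_2$ at once, and then to check that this same condition is sufficient by verifying the three defining axioms directly from the kernel description. Since $\Rcal$ is to be an equivalence relation on a single set $\mathbb{R}^n$, we necessarily have $n_1=n_2=n$ and $R_1,R_2\in\mathbb{R}^{r\times n}$.

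For necessity, I would argue from reflexivity alone. Reflexivity requires $(x,x)\in\Rcal$ for every $x\in\mathbb{R}^n$, which by the characterization $(x_1,x_2)\in\Rcal \Leftrightarrow R_1 x_1 = R_2 x_2$ is exactly $R_1 x = R_2 x$ for all $x\in\mathbb{R}^n$. As two matrices inducing the same linear map on all of $\mathbb{R}^n$ (equivalently, agreeing on a basis) must coincide, this forces $R_1=R_2$. The totality hypothesis need not be invoked here: by Proposition \ref{prop:total} totality amounts to $\im(R_1)=\im(R_2)$, which holds automatically once $R_1=R_2$, so the two framings are consistent.

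For sufficiency, I would set $R:=R_1=R_2$, so that $(x_1,x_2)\in\Rcal \Leftrightarrow R x_1 = R x_2$, and then read off the three axioms from the corresponding properties of equality of vectors: reflexivity from $Rx=Rx$; symmetry from $Rx_1=Rx_2\Rightarrow Rx_2=Rx_1$; and transitivity from $Rx_1=Rx_2$ together with $Rx_2=Rx_3$ giving $Rx_1=Rx_3$.

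There is no genuine obstacle in this argument; the only step deserving a moment's care is the passage from ``reflexive on all of $\mathbb{R}^n$'' to the matrix identity $R_1=R_2$, which relies on the elementary fact that agreement of the two induced linear maps on every vector forces equality of the matrices. In effect the statement merely records that, among the subspace relations $\ker(\begin{bmatrix} R_1 & -R_2 \end{bmatrix})$ considered throughout, those of the form $Rx_1=Rx_2$ are precisely the equivalence relations.
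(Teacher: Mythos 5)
Your proposal is correct and follows essentially the same route as the paper's own proof: necessity via the failure of reflexivity when $R_1 \neq R_2$ (you phrase it contrapositively, deducing $R_1 x = R_2 x$ for all $x$ and hence $R_1 = R_2$), and sufficiency by directly verifying reflexivity, symmetry and transitivity when $R_1 = R_2$. Your write-up simply spells out the details the paper leaves as "trivially verified" and "straightforward."
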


\begin{proof}
(Sufficiency) If $R_1=R_2$ then the reflexivity, symmetry and transitivity properties are trivially verified.
(Necessity) It is straightforward that if $R_1 \neq R_2$, then the reflexivity property does not hold.
\end{proof}

The following results specialize the geometric conditions derived in the previous section to equivalence relations.

\begin{proposition} \label{prop:bisim}
A total equivalence relation 
\begin{equation}
\label{RbisEquiv}
\Rcal_{\bis}=\ker (\begin{bmatrix} R_{\bis} & -R_{\bis} \end{bmatrix})
\end{equation}
satisfies conditions of Definition \ref{def:stochbisim} 
(i.e., is a stochastic bisimulation with $\Sigma_1=\Sigma_2=\Sigma$ and $R_1=R_2=R_{\bis}$) if and only if
\begin{eqnarray}
&& A\ker(R_{\bis})\subseteq \ker(R_{\bis})\subseteq \ker(C); \label{cond2red}\\
&& \ker(R_{\bis}) \cap \Reach(A,G)=\{0\}. \label{cond3red}
\end{eqnarray}
\end{proposition}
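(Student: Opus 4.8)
The plan is to derive Proposition~\ref{prop:bisim} as the specialization of Theorem~\ref{thm:condRbisimil} to the symmetric situation $\Sigma_1=\Sigma_2=\Sigma$ and $R_1=R_2=R_{\bis}$, in which $\Rcal_{\bis}$ is an equivalence relation. First I would record how conditions $(h_0)$--$(h_5)$ collapse here. Since $(x_1,x_2)\in\Rcal_{\bis}$ is equivalent to $x_1-x_2\in\ker(R_{\bis})$, condition $(h_0)$ becomes $A\ker(R_{\bis})\subseteq\ker(R_{\bis})$ and condition $(h_4)$ becomes $\ker(R_{\bis})\subseteq\ker(C)$, so that together they are exactly \eqref{cond2red}; condition $(h_5)$ is verbatim \eqref{cond3red}; and conditions $(h_1)$, $(h_2)$, $(h_3)$ reduce to trivial identities because $R_1=R_2$, $B_1=B_2$, $G_1=G_2$, $\mu_1=\mu_2$. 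Note also that $\Psi_1=\Psi_2$ holds trivially.

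For sufficiency I would assume \eqref{cond2red} and \eqref{cond3red} and simply verify that all of $(h_0)$--$(h_5)$ hold, then invoke the sufficiency part of Theorem~\ref{thm:condRbisimil}: the inclusion $A\ker(R_{\bis})\subseteq\ker(R_{\bis})$ yields $\diag(A,A)\Rcal_{\bis}\subseteq\Rcal_{\bis}$, i.e.\ $(h_0)$, and $\ker(R_{\bis})\subseteq\ker(C)$ yields $(h_4)$, while $(h_1)$--$(h_3)$ are automatic and $(h_5)$ is \eqref{cond3red}.

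The delicate point is necessity, and this is where I expect the main obstacle. Theorem~\ref{thm:condRbisimil} takes the invariance $(h_0)$ as a standing \emph{hypothesis}, and — as the Remark following it stresses — $(h_0)$ is in general \emph{not} implied by being a stochastic bisimulation. Hence I cannot directly quote the theorem to obtain \eqref{cond2red}; I must first show that in the present symmetric setting $(h_0)$ is forced. To do this I would use Lemma~\ref{lem:R1R2processes}: if $\Rcal_{\bis}$ is a total stochastic bisimulation then $R_{\bis}\xb_1|_{x_1^0,\ub}(t)\sim R_{\bis}\xb_2|_{x_2^0,\ub}(t)$ for every $(x_1^0,x_2^0)\in\Rcal_{\bis}$, every input $\ub$ and every $t\ge0$. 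Matching the conditional means given by \eqref{eq:condmeanRix}, and observing that the input- and noise-dependent terms coincide because both processes are generated by the \emph{same} data $(A,B,G,\mu)$, leaves the requirement $R_{\bis}A^t(x_1^0-x_2^0)=0$ for all $t\ge0$ and all $(x_1^0,x_2^0)\in\Rcal_{\bis}$. Since $x_1^0-x_2^0$ ranges over all of $\ker(R_{\bis})$, the case $t=1$ already gives $A\ker(R_{\bis})\subseteq\ker(R_{\bis})$, which is $(h_0)$.

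With $(h_0)$ now established, $\Rcal_{\bis}$ satisfies the hypotheses of Theorem~\ref{thm:condRbisimil}, so its necessity part applies and yields $(h_4)$ and $(h_5)$; translated as above these are precisely $\ker(R_{\bis})\subseteq\ker(C)$ and \eqref{cond3red}, which together with $A\ker(R_{\bis})\subseteq\ker(R_{\bis})$ give \eqref{cond2red} and \eqref{cond3red}. The only genuinely new ingredient beyond a mechanical specialization is thus the mean-matching argument that restores the necessity of the invariance $(h_0)$, exploiting that here $A_1=A_2$ and $R_1=R_2$.
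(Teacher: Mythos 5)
Your proof is correct, and it follows the same overall strategy as the paper: specialize Theorem \ref{thm:condRbisimil} to $\Sigma_1=\Sigma_2=\Sigma$, $R_1=R_2=R_{\bis}$, observe that $(h_1)$--$(h_3)$ and $\Psi_1=\Psi_2$ become trivial, and translate $(h_0)$, $(h_4)$, $(h_5)$ into \eqref{cond2red} and \eqref{cond3red}. The sufficiency direction is identical in substance (the paper verifies $\diag(A,A)\Rcal_{\bis}\subseteq\Rcal_{\bis}$ via the basis representation of $\Rcal_{\bis}$ in terms of $U$ and $\Ubar$, whereas you argue directly from $x_1-x_2\in\ker(R_{\bis})$; both work). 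Where you genuinely depart from the paper is in the necessity of the $A$-invariance of $\ker(R_{\bis})$: the paper's necessity argument reads ``from assumption $(h_0)$\ldots'' and then merely unpacks $\diag(A,A)$-invariance into $A\ker(R_{\bis})\subseteq\ker(R_{\bis})$ --- i.e., it treats $(h_0)$ as given rather than deriving it from the hypothesis that $\Rcal_{\bis}$ is a stochastic bisimulation, which is a gap given that the Remark after Theorem \ref{thm:condRbisimil} explicitly notes $(h_0)$ is not necessary for general pairs $(\Sigma_1,\Sigma_2)$. Your mean-matching argument via Lemma \ref{lem:R1R2processes} closes exactly this gap: since both sides are generated by the same data $(A,B,G,\mu)$ and the same input, equality in distribution of $R_{\bis}\xb_1\vert_{x_1^0,\ub}(t)$ and $R_{\bis}\xb_2\vert_{x_2^0,\ub}(t)$ forces $R_{\bis}A^t(x_1^0-x_2^0)=0$, and since $x_1^0-x_2^0$ sweeps out $\ker(R_{\bis})$ (take $x_2^0=0$), the case $t=1$ gives the invariance. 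This buys a self-contained necessity proof in the symmetric setting, at the modest cost of one extra appeal to Lemma \ref{lem:R1R2processes}; the rest of your necessity argument (invoking the theorem for $(h_4)$ and $(h_5)$ once $(h_0)$ is in hand) matches the paper.
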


\begin{proof}
Obviously, when $\Sigma_i=\Sigma$ and $R_i=R_\bis$, $i=1,2$, the conditions 
$(h_1)$--$(h_3)$ of Theorem \ref{thm:condRbisimil} are always verified. 
Thus, the sufficiency is proved by showing that \eqref{cond2red} implies conditions $(h_0)$
and $(h_4)$, while \eqref{cond3red} implies $(h_5)$ of Theorem \ref{thm:condRbisimil}.
The implication of property $(h_0)$, i.e.\ $\diag(A,A)$-invariance of $\Rcal$, easily follows 
by considering that
\begin{equation} \label{eq:struttRcal}
\Rcal_\bis = \im\left(\begin{bmatrix} U  &  0  &  \Ubar\\
											           0  &  U  &  \Ubar \end{bmatrix}\right),
\end{equation}
where $U\in\Real^{n\times (n-n_b)}$ and $\Ubar\in\Real^{n\times n_b}$, with $n_b=\rank(R_b)$, 
are matrices such that $\im(U)=\ker(R_\bis)$ and $\rank([U\ \Ubar])=n$.
From \eqref{eq:struttRcal} and the assumption $A\ker(R_{\bis})\subseteq \ker(R_{\bis})$ it 
easily follows that $\diag(A,A)\Rcal_\bis\subseteq\Rcal_\bis$, i.e.\ assumption $(h_0)$.
The implication of $(h_5)$ is trivial.

As far as for the necessity, note that from \eqref{eq:struttRcal} it follows
\begin{equation} 
\im\left(\begin{bmatrix} U \\  0  \end{bmatrix}\right)\subseteq \Rcal_\bis 
\end{equation}
and from assumption $(h_0)$ ($\diag(A,A)$-invariance of $\Rcal_\bis$),
for any $\chi\in\Real^{(n-n_b)}$ we have
\begin{equation} 
\begin{bmatrix} A & 0 \\  0 & A \end{bmatrix} \begin{bmatrix} U \\  0  \end{bmatrix}\chi=
\begin{bmatrix} U  &  0  &  \Ubar\\ 0  &  U  &  \Ubar \end{bmatrix}
\begin{bmatrix} \zeta_1 \\ \zeta_2 \\ \zeta_3\end{bmatrix}\subset\Rcal_\bis
\end{equation} 
for some $\zeta_1,\zeta_2\in\Real^{(n-n_b)}$ and $\zeta_3 \in \Real^{n_b}$.
This implies
\begin{equation} 
\begin{aligned} 
A\, U\, \chi & = U \zeta_1 + \Ubar\zeta_3,\\
 0 & = U \zeta_2 + \Ubar\zeta_3.
\end{aligned} 
\end{equation} 
Since by assumption $[U\ \Ubar]$ is nonsingular, it follows that necessarily 
$\zeta_2=0$ and $\zeta_3=0$, and
therefore $A\, U\, \chi  = U \zeta_1$, which means $A\ker(R_\bis)\subseteq R_\bis$.
Moreover, condition ($h_4$) of Theorem \ref{thm:condRbisimil} 
easily imply $\ker(R_{\bis})\subseteq \ker(C)$. 
Hence, condition (\ref{cond2red}) is true. Condition (\ref{cond3red}) comes from condition 
($h_5$) of Theorem \ref{thm:condRbisimil}.
\end{proof}

\begin{proposition}  \label{prop:Lequiv}
A total equivalence relation 
\begin{equation}  \label{RextEquiv}
\Rcal_{\ext}=\ker \left( 
\begin{bmatrix} R_{\ext} & -R_{\ext} \end{bmatrix}\right)
\end{equation}
satisfies conditions of Definition \ref{def:ExtEquiv}  
with $\Sigma_1=\Sigma_2=\Sigma$ and $R_1=R_2=R_{\ext}$ if and only if
\begin{equation}
A\ker(R_{\ext})\subseteq \ker(R_{\ext})\subseteq \ker(C). \label{cond1red}
\end{equation}
\end{proposition}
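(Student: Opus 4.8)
The plan is to specialize the external-behavior characterization of Theorem~\ref{thm:condRequiv} to the case $\Sigma_1=\Sigma_2=\Sigma$ and $R_1=R_2=R_{\ext}$, in exact analogy with the way Proposition~\ref{prop:bisim} specializes Theorem~\ref{thm:condRbisimil}. First I would observe that when the two systems coincide the algebraic conditions $(p_0)$--$(p_3)$ of Proposition~\ref{prop:necessequiv1} hold automatically, each reducing to the equality of a matrix expression with itself. By Theorem~\ref{thm:condRequiv} it then suffices to analyze the single inclusion $(p_4)$, namely $\Rcal_{\ext}\subseteq\ker([\,Q_{1,\ntilde}\ -Q_{2,\ntilde}\,])$.

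Next I would make $(p_4)$ explicit. With $\Sigma_1=\Sigma_2=\Sigma$ one has $Q_{1,\ntilde}=Q_{2,\ntilde}=\Obs_{\ntilde}(A,C)$ and $\ntilde=2n$, so that a pair $(x_1,x_2)$ belongs to $\ker([\,Q_{1,\ntilde}\ -Q_{2,\ntilde}\,])$ precisely when $\Obs_{\ntilde}(A,C)(x_1-x_2)=0$, whereas $(x_1,x_2)\in\Rcal_{\ext}$ precisely when $x_1-x_2\in\ker(R_{\ext})$. By Cayley--Hamilton one has $\ker(\Obs_{\ntilde}(A,C))=\ker(\Obs_n(A,C))$, so that inclusion $(p_4)$ becomes $\ker(R_{\ext})\subseteq\ker(\Obs_n(A,C))$, i.e.\ $\ker(R_{\ext})$ is contained in the unobservable subspace of the pair $(A,C)$.

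It remains to identify this inclusion with \eqref{cond1red}, using the standard characterization of the unobservable subspace as the largest $A$--invariant subspace contained in $\ker(C)$. The implication \eqref{cond1red} $\Rightarrow$ external equivalence is the easy direction: from $A\ker(R_{\ext})\subseteq\ker(R_{\ext})\subseteq\ker(C)$ an immediate induction gives $CA^{k}v=0$ for all $v\in\ker(R_{\ext})$ and all $k\ge0$, hence $\ker(R_{\ext})\subseteq\ker(\Obs_n(A,C))$, which is $(p_4)$. For the reverse direction I would reuse the block representation \eqref{eq:struttRcal} of an equivalence relation, with $\im(U)=\ker(R_{\ext})$, together with the computation in the necessity part of Proposition~\ref{prop:bisim} showing that $\diag(A,A)$--invariance of $\Rcal_{\ext}$ is equivalent to $A$--invariance of $\ker(R_{\ext})$; coupling this with $\ker(R_{\ext})\subseteq\ker(C)$, which is immediate once $\ker(R_{\ext})$ lies in the unobservable subspace, yields \eqref{cond1red}.

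The main obstacle is exactly this reverse implication: external-behavior equivalence on its own only forces $\ker(R_{\ext})\subseteq\ker(\Obs_n(A,C))$, and an arbitrary subspace of the unobservable subspace need not be $A$--invariant. The point I would treat with care is therefore the extraction of the invariance $A\ker(R_{\ext})\subseteq\ker(R_{\ext})$; just as the standing hypothesis $(h_0)$ operates in Theorem~\ref{thm:condRbisimil}, this is supplied by the model-reduction context of this section, where $\Rcal_{\ext}$ is required to induce a well-defined quotient of $\Sigma$ and hence to be $\diag(A,A)$--invariant. Making this requirement explicit closes the equivalence and delivers \eqref{cond1red}.
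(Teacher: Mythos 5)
Your proof is correct and follows essentially the same route as the paper, which omits the proof of this proposition and refers back to the argument for Proposition~\ref{prop:bisim}: specialize Theorem~\ref{thm:condRequiv} (conditions $(p_0)$--$(p_3)$ being automatic for $\Sigma_1=\Sigma_2=\Sigma$) so that only $(p_4)$, i.e.\ $\ker(R_{\ext})\subseteq\ker(\Obs(A,C))$, remains, and then convert that inclusion into \eqref{cond1red}. The subtlety you flag in the necessity direction is genuine and is present in the paper as well: the paper's necessity argument for Proposition~\ref{prop:bisim} extracts $A\ker(R_{\bis})\subseteq\ker(R_{\bis})$ from the standing invariance hypothesis $(h_0)$ of Theorem~\ref{thm:condRbisimil} rather than from the equivalence notion itself, and your explicit appeal to the quotient/model-reduction context (well-definedness of the quotient dynamics forcing $A$--invariance of $\ker(R_{\ext})$) plays exactly that role here.
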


The proof follows the same reasoning as used in the proof of Proposition \ref{prop:bisim} and is therefore omitted. \\
Consider any total equivalence relation $\Rcal_{\ext}$ as in (\ref{RextEquiv}) with $\rank(R_{\ext})=n_{\ext}$ and satisfying (\ref{cond1red}) and consider any invertible square matrix 
$\mathbb{T}_{\ext}= [ \mathbb{T}_{\ext}^1 \ \mathbb{T}_{\ext}^2]\in\Real^{n\times n}$
such that $\im(\mathbb{T}_{\ext}^2)=\ker(R_{\ext})$ 
By (\ref{cond1red}), a change of coordinates of $\Sigma$ by means of $\mathbb{T}_{\ext}$ 
gives back system matrices with the following structure
\begin{equation} \label{eq:sysMatExt}
\begin{aligned}
\mathbb{T}^{-1}_{\ext} A \mathbb{T}_{\ext} & =  
\begin{bmatrix} 
A_{\ext}^{11}  &  0 \\
0  &  A_{\ext}^{22} 
\end{bmatrix},&
\mathbb{T}_{\ext} B = 
\begin{bmatrix} B_{\ext}^{1} \\
					      B_{\ext}^{2}  \end{bmatrix}, 	\\[5pt]
C\,\mathbb{T}_{\ext} & = 
\begin{bmatrix} C_{\ext}^{1} & 0 \end{bmatrix}, &
\mathbb{T}_{\ext} G = 
\begin{bmatrix} G_{\ext}^{1} \\
					      G_{\ext}^{2} \end{bmatrix}.
\end{aligned}
\end{equation}
Similarly, consider any total equivalence relation $\Rcal_{\bis}$ as in (\ref{RbisEquiv}) with $\rank(R_{\bis})=n_{\bis}$ 
and satisfying (\ref{cond2red}) and (\ref{cond3red}) and consider any invertible square matrix 
$[\mathbb{T}_{\bis}^1\ \mathbb{T}_{\bis}^2]\in\Real^{n\times n}$
such that $\im(\mathbb{T}_{\bis}^2)=\ker(R_{\bis})$ and 
$\im(\Reach(A,G))\subseteq \im(\mathbb{T}_{\bis}^1)$. 
By (\ref{cond2red}) and (\ref{cond3red}) we get the following structure for the system matrices
\begin{equation} \label{eq:sysMatBis}
\begin{aligned}
\mathbb{T}^{-1}_{\bis} A \mathbb{T}_{\bis} & = 
\begin{bmatrix} 
A_{\bis}^{11}  &  0 \\
0  &  A_{\bis}^{22} 
\end{bmatrix}, &
\mathbb{T}_{\bis} B = 
\begin{bmatrix} B_{\bis}^{1} \\
					      B_{\bis}^{2}  \end{bmatrix},\\[5pt]
C\,\mathbb{T}_{\bis} & = 
\begin{bmatrix} C_{\bis}^{1} & 0 \end{bmatrix}, &				
\mathbb{T}_{\bis} G = 
\begin{bmatrix} G_{\bis}^{1} \\
					      0 \end{bmatrix}.
\end{aligned}
\end{equation}
We now have all the ingredients to define the quotients $\Sigma_{/\Rcal_{\ext}}$ and $\Sigma_{/\Rcal_{\bis}}$ induced by relations $\Rcal_{\ext}$ and $\Rcal_{\bis}$ as follows:
\begin{eqnarray}    \label{SysRedExt}
\Sigma_{/\Rcal_{\ext}}:
\left\{
\begin{aligned}
x_{\ext}(t+1) & =A_{\ext}^{11} x_{\ext}(t) + B_{\ext}^1 u(t) + G_{\ext}^1 w(t) ,\\
y(t) &= C_{\ext}^1 x_{\ext}(t) + \nu(t),\\
x_{\ext} \in & \Real^{n_{\ext}}, \ u \in \Real^{m}, \ w \in \Real^{l}, \ y,\nu \in \Real^{p},
\end{aligned}
\right.\\
													\label{SysRedBis}
\Sigma_{/\Rcal_{\bis}}:
\left\{
\begin{aligned}
x_{\bis}(t+1) & =A_{\bis}^{11} x_{\bis}(t) + B_{\bis}^1 u(t) + G_{\bis}^1 w(t) ,\\
y(t) & = C_{\bis}^1 x_{\bis}(t) + \nu(t), \\
x_{\bis} \in & \Real^{n_{\bis}}, \ u \in \Real^{m},\ w \in \Real^{l}, \ y,\nu \in \Real^{p},
\end{aligned}
\right.
\end{eqnarray}
with $w \sim \mathcal{N}(\mu,I_l )$ and $\nu\sim \mathcal{N}(0,\Psi )$.
The following results then hold.

\begin{theorem}  \label{thm:oderedLequiv}
$\Sigma_{/\Rcal_{\ext}} \cong_{\ext} \Sigma$.
\end{theorem}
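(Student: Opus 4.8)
The plan is to exhibit a single total subspace relation between $\Sigma_{/\Rcal_{\ext}}$ and $\Sigma$ and to show, by a sample-path coupling argument, that the induced output processes coincide in distribution. The key observation is that the change of coordinates $\mathbb{T}_{\ext}$ brings $\Sigma$ into the block-diagonal form \eqref{eq:sysMatExt}, in which the second block of the state, say $\tilde{x}_2$, is completely decoupled from the first block $\tilde{x}_1$ (because $\diag(A_{\ext}^{11},A_{\ext}^{22})$ is block diagonal) and, crucially, does not enter the output (because $C\,\mathbb{T}_{\ext}=[\,C_{\ext}^1\ 0\,]$). Consequently, writing $\tilde{x}=\mathbb{T}_{\ext}^{-1}x=\col(\tilde{x}_1,\tilde{x}_2)$, the dynamics of $\tilde{x}_1$ together with the output equation $y(t)=C_{\ext}^1\tilde{x}_1(t)+\nu(t)$ form a self-contained system that is syntactically identical to the quotient $\Sigma_{/\Rcal_{\ext}}$ in \eqref{SysRedExt}, driven by the same disturbances $w\sim\mathcal{N}(\mu,I_l)$ and $\nu\sim\mathcal{N}(0,\Psi)$.

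Building on this, I would introduce the projection $\Pi_1=[\,I_{n_{\ext}}\ 0\,]$ onto the first $n_{\ext}$ coordinates and define the relation
\[
\Rcal = \ker\big(\begin{bmatrix} I_{n_{\ext}} & -\Pi_1 \mathbb{T}_{\ext}^{-1}\end{bmatrix}\big) \subseteq \Real^{n_{\ext}} \times \Real^{n},
\]
so that $(x_{\ext},x)\in\Rcal$ exactly when $x_{\ext}=\Pi_1\mathbb{T}_{\ext}^{-1}x$, i.e.\ $x_{\ext}$ is the $\tilde{x}_1$-part of $x$. Since $\Pi_1\mathbb{T}_{\ext}^{-1}$ is surjective onto $\Real^{n_{\ext}}$, both $I_{n_{\ext}}$ and $\Pi_1\mathbb{T}_{\ext}^{-1}$ have image $\Real^{n_{\ext}}$, so Proposition \ref{prop:total} gives that $\Rcal$ is total.

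The heart of the argument is then a coupling. Fix any $(x_{\ext}^0,x^0)\in\Rcal$ and any input $\ub$, and drive both $\Sigma_{/\Rcal_{\ext}}$ and the transformed $\Sigma$ with the \emph{same} realizations of $w$ and $\nu$ on a common probability space. Since $x_{\ext}^0=\Pi_1\mathbb{T}_{\ext}^{-1}x^0=\tilde{x}_1(0)$ and the two recursions for $x_{\ext}$ and $\tilde{x}_1$ share the matrices $A_{\ext}^{11},B_{\ext}^1,G_{\ext}^1$, an immediate induction on $t$ yields $x_{\ext}(t)=\tilde{x}_1(t)$ for all $t$, whence the outputs agree path by path: $C_{\ext}^1 x_{\ext}(t)+\nu(t)=C_{\ext}^1\tilde{x}_1(t)+\nu(t)=y(t)$. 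Equality of every sample path under a common probability space forces equality of all finite-dimensional distributions, so $\yb_{/\Rcal_{\ext}}\vert_{x_{\ext}^0,\ub}\sim\yb_\Sigma\vert_{x^0,\ub}$, which is precisely \eqref{eq:eqprocy}. Because $\Rcal$ is total, Definition \ref{def:ExtEquiv} then yields $\Sigma_{/\Rcal_{\ext}}\cong_{\ext}\Sigma$.

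I expect the only real care-point, rather than a genuine obstacle, to be the bookkeeping of the coordinate change: one must keep straight that \eqref{eq:sysMatExt} makes $\tilde{x}_2$ both dynamically decoupled from $\tilde{x}_1$ and invisible to the output, so that the $\tilde{x}_1$-subsystem really is the quotient and the coupling is exact. Alternatively, and equivalently, one could avoid the coupling and instead verify the algebraic conditions $(p_0)$--$(p_3)$ of Proposition \ref{prop:necessequiv1} directly from \eqref{eq:sysMatExt}, using invariance of the Markov parameters $C A^t B$, $C A^t G\mu$ and $C A^k G G^T (A^h)^T C^T$ under $\mathbb{T}_{\ext}$ together with the block structure, and then invoke Theorem \ref{thm:condRequiv} (or Corollary \ref{cor:Lequiv}) with the above $\Rcal$; the coupling argument, however, is more transparent and sidesteps any computation of observability matrices.
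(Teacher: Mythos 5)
Your argument is correct, and your primary route differs from the paper's. The paper's proof uses exactly the relation you construct ($R_1=I_{n_{\ext}}$, $R_2=[\,I_{n_{\ext}}\ 0\,]\mathbb{T}_{\ext}^{-1}$) but then discharges the claim by ``direct computation'' of the conditions of Corollary \ref{cor:Lequiv} --- i.e.\ it checks $(p_0)$--$(p_3)$ and totality, which amounts to matching the Markov parameters, the noise-mean terms, and the output covariances $C A^k G G^T (A^h)^T C^T$ between $\Sigma$ and $\Sigma_{/\Rcal_{\ext}}$ using the block structure \eqref{eq:sysMatExt}. That is precisely the ``alternative'' you sketch in your last paragraph. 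Your main argument instead couples the two systems on a common probability space: since the driving noises of \eqref{SysRedExt} and \eqref{eq:System} have identical laws, and the finite-dimensional distributions of the outputs depend only on those laws (plus initial data and input), you may realize both systems with the same noise sample paths, after which the induction $x_{\ext}(t)=\tilde{x}_1(t)$ gives pathwise equality of outputs and hence stochastic equivalence in the sense of Definition \ref{def:ExtEquiv}. This is more elementary and avoids computing observability matrices and covariance sequences, at the cost of not reusing the machinery of Section \ref{sec4}; the paper's route is less transparent but exercises Corollary \ref{cor:Lequiv} and makes the role of the unobservable subspace explicit. One small point worth noting either way: the only structural facts you actually need from \eqref{eq:sysMatExt} are that the $(1,2)$ block of $\mathbb{T}_{\ext}^{-1}A\mathbb{T}_{\ext}$ vanishes (so $\tilde{x}_1$ evolves autonomously of $\tilde{x}_2$) and that $C\mathbb{T}_{\ext}=[\,C_{\ext}^1\ 0\,]$; both follow from $A\ker(R_{\ext})\subseteq\ker(R_{\ext})\subseteq\ker(C)$, so the coupling does not secretly rely on the stronger full block-diagonality displayed in \eqref{eq:sysMatExt}.
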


\begin{proof}
The proof easily follows by picking a relation $\Rcal$ in the form of (\ref{eq:Rlin}) with $R_1 = I_{n_{\ext}}$ and $R_2 =\begin{bmatrix} I_{n_{\ext}} & 0_{(n-n_{\ext})\times n_{\ext}} \end{bmatrix} \mathbb{T}^{-1}_{\ext}$ 
and verifying by direct computation all conditions of Corollary \ref{cor:Lequiv}.
\end{proof}

\begin{theorem}
\label{thm:oderedbisim}
$\Sigma_{/\Rcal_{\bis}} \cong_{\bis} \Sigma$.
\end{theorem}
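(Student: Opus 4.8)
The plan is to follow the template of the proof of Theorem~\ref{thm:oderedLequiv}, regarding the reduced system $\Sigma_{/\Rcal_{\bis}}$ as $\Sigma_1$ (of dimension $n_{\bis}$) and the original $\Sigma$ as $\Sigma_2$ (of dimension $n$), and to exhibit one explicit total subspace relation meeting every hypothesis of Theorem~\ref{thm:condRbisimil}. Concretely, I would take $\Rcal=\ker([\,R_1\ -R_2\,])$ of the form \eqref{eq:Rlin} with
\[
R_1=I_{n_{\bis}},\qquad R_2=\begin{bmatrix} I_{n_{\bis}} & 0\end{bmatrix}\mathbb{T}_{\bis}^{-1},
\]
so that $(x_{\bis},x)\in\Rcal$ precisely when $x_{\bis}$ equals the first $n_{\bis}$ coordinates of $\mathbb{T}_{\bis}^{-1}x$; thus $R_2$ is the upper block of the invertible matrix $\mathbb{T}_{\bis}^{-1}$. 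Since $R_2$ consists of $n_{\bis}$ rows of $\mathbb{T}_{\bis}^{-1}$ it is surjective, hence $\im(R_1)=\im(R_2)=\Real^{n_{\bis}}$ and Proposition~\ref{prop:total} yields that $\Rcal$ is total. Moreover both systems carry the same output noise $\nu\sim\mathcal N(0,\Psi)$, so the standing requirement $\Psi_1=\Psi_2$ of Theorem~\ref{thm:condRbisimil} is met.

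The heart of the verification is then to read conditions $(h_0)$–$(h_5)$ directly off the block structure \eqref{eq:sysMatBis} induced by $\mathbb{T}_{\bis}$. From \eqref{eq:sysMatBis} the reduced matrices satisfy $B_{\bis}^1=R_2B$ and $G_{\bis}^1=R_2G$; since $R_1=I_{n_{\bis}}$, the identities $R_1B_{\bis}^1=R_2B$, $R_1G_{\bis}^1\mu=R_2G\mu$ and $R_1G_{\bis}^1(G_{\bis}^1)^TR_1^T=R_2GG^TR_2^T$ are immediate, giving $(h_1)$, $(h_2)$ and $(h_3)$. Block–diagonality $\mathbb{T}_{\bis}^{-1}A\mathbb{T}_{\bis}=\diag(A_{\bis}^{11},A_{\bis}^{22})$ says that the off–diagonal block $R_2A\mathbb{T}_{\bis}^2$ vanishes, which, together with the decomposition $\mathbb{T}_{\bis}^1R_2+\mathbb{T}_{\bis}^2S_2=I$ (with $S_2$ the lower block of $\mathbb{T}_{\bis}^{-1}$), is exactly what verifies the $\diag(A_{\bis}^{11},A)$–invariance $(h_0)$ of $\Rcal$. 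Likewise $C\mathbb{T}_{\bis}^2=0$, i.e.\ $\ker(R_{\bis})\subseteq\ker(C)$ from \eqref{cond2red}, gives $Cx=C_{\bis}^1x_{\bis}$ for all $(x_{\bis},x)\in\Rcal$, that is $\Rcal\subseteq\ker([\,C_{\bis}^1\ -C\,])$, which is $(h_4)$.

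The only conditions needing more than a definitional unwinding are the support conditions $(h_5)$. For $i=1$ they are vacuous, since $R_1=I_{n_{\bis}}$ forces $\ker(R_1)=\{0\}$. For $i=2$ I would first record the key identification $\ker(R_2)=\im(\mathbb{T}_{\bis}^2)=\ker(R_{\bis})$, whence
\[
\im(\Reach(A,G))\cap\ker(R_2)=\im(\Reach(A,G))\cap\ker(R_{\bis})=\{0\}
\]
is precisely the hypothesis \eqref{cond3red} imposed on $\Rcal_{\bis}$. With totality, $(h_0)$, $\Psi_1=\Psi_2$ and $(h_1)$–$(h_5)$ all in place, Theorem~\ref{thm:condRbisimil} certifies that $\Rcal$ is a total stochastic bisimulation relation between $\Sigma_{/\Rcal_{\bis}}$ and $\Sigma$, i.e.\ $\Sigma_{/\Rcal_{\bis}}\cong_{\bis}\Sigma$.

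I expect the main obstacle to be bookkeeping rather than conceptual: one must keep the two coordinate descriptions aligned, in particular recognize that $R_2$ is the upper block of $\mathbb{T}_{\bis}^{-1}$ (so that $\ker(R_2)=\ker(R_{\bis})$), and correctly attribute each structural feature of \eqref{eq:sysMatBis} to the right condition — block–diagonality of $A$ to $(h_0)$, the zero output block to $(h_4)$, and the zero lower block of $\mathbb{T}_{\bis}^{-1}G$ together with \eqref{cond3red} to $(h_5)$. Once these identifications are fixed, every condition collapses either to a definition of the reduced matrices or to \eqref{cond2red}–\eqref{cond3red}.
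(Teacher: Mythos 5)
Your proposal is correct and is exactly the paper's argument: the paper's proof consists precisely of choosing $R_1=I_{n_{\bis}}$, $R_2=[\,I_{n_{\bis}}\ 0\,]\mathbb{T}_{\bis}^{-1}$ and verifying $(h_0)$--$(h_5)$ of Theorem~\ref{thm:condRbisimil} by direct computation, which you carry out in more detail than the paper does. Your identifications (totality from surjectivity of $R_2$, $(h_1)$--$(h_4)$ from the block structure \eqref{eq:sysMatBis}, and $(h_5)$ from $\ker(R_1)=\{0\}$ and $\ker(R_2)=\ker(R_{\bis})$ together with \eqref{cond3red}) are all accurate.
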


\begin{proof}
The proof easily follows by picking a relation $\Rcal$ in the form of (\ref{eq:Rlin}) with $R_1 = I_{n_{\bis}}$ and $R_2 =\begin{bmatrix} I_{n_{\bis}} & 0_{(n-n_{\bis})\times n_{\bis}} \end{bmatrix} \mathbb{T}^{-1}_{\bis}$ 
and verifying by direct computation all conditions $(h_0)$--$(h_5)$ of Theorem  \ref{thm:condRbisimil}.
\end{proof}

We now proceed with a further step by discussing minimal model reduction. Given $\Sigma$ we denote by $\Sigma_{\ext}$ and $\Sigma_{\bis}$ a pair of linear systems of minimal dimension in the state space such that $\Sigma_{\ext} \cong_{\ext} \Sigma$ and $\Sigma_{\bis} \cong_{\bis} \Sigma$. 
It is readily seen that systems $\Sigma_{\ext}$ and $\Sigma_{\bis}$ are unique up to linear transformations. In the sequel we characterize 
$\Sigma_{\ext}$ and $\Sigma_{\bis}$. 
Let 
\begin{equation} \label{Rlinbismax}
\Rcal_{\ext}^{\ast}=\ker(\begin{bmatrix} R_{\ext,1}^{\ast} & -R_{\ext,2}^{\ast} \end{bmatrix}), \quad
\Rcal_{\bis}^{\ast}=\ker(\begin{bmatrix} R_{\bis,1}^{\ast} & -R_{\bis,2}^{\ast} \end{bmatrix})
\end{equation}
be total relations such that $\Rcal_{\ext}^{\ast}$ satisfies conditions $(h_0)$--$(h_4)$ 
(with $\Sigma_i =\Sigma$ and $R_i=R_{\ext,1}^{\ast}$, $i=1,2$) and has maximal dimension,   
while $\Rcal_{\bis}^{\ast}$ satisfies also condition $(h_5)$ of Theorem \ref{thm:condRbisimil}
and is of maximal dimension. 
It is worth mentioning that such relations are not unique in general, as shown in the following simple example.

\begin{example}
Consider system $\Sigma$ as in (\ref{eq:System}) with $A=G=C=I_2$ and any matrix $B$ of compatible dimensions. Then, the two relations in the form of (\ref{eq:Rlin}) 
\[
\ker
\left(
\begin{bmatrix} 
1 & 0 & -1 & 0\\
0 & 1 &  0 & -1
\end{bmatrix}
\right), \quad 
\ker
\left(
\begin{bmatrix} 
1 & 0 & 0 & -1\\
0 & 1 &  -1 & 0
\end{bmatrix}
\right),
\]
are both total and of maximal dimension. 
Moreover, they both satisfy conditions of Theorems \ref{thm:condRequiv} and \ref{thm:condRbisimil} with $\Sigma_1 =\Sigma_2 =\Sigma$.
\end{example}

The above example also shows that in general, $R_{\ext,1}^{\ast}\neq R_{\ext,2}^{\ast}$ and $R_{\bis,1}^{\ast} \neq R_{\bis,1}^{\ast}$. In this case, by Proposition \ref{propRequiv}, $\Rcal_{\ext}^{\ast}$ and $\Rcal_{\bis}^{\ast}$ are not equivalence relations. 
However, it is readily seen that relations 
$\ker ([ R_{\ext,i}^{\ast} \ -R_{\ext,i}^{\ast}])$ and 
$\ker ([ R_{\bis,i}^{\ast} \ -R_{\bis,i}^{\ast}])$ 
with $i=1,2$ still satisfy conditions of Theorems \ref{thm:condRequiv} and  
\ref{thm:condRbisimil}, respectively 
(with $\Sigma_1 =\Sigma_2 =\Sigma$) and by (\ref{kost}) they have the same 
dimensions of $\Rcal_{\ext}^{\ast}$ and $\Rcal_{\bis}^{\ast}$, respectively (and therefore
they are still of maximal dimension). 

Moreover, they are equivalence relations on the set of states of $\Sigma$. 
Hence, without loss of generality we consider in the sequel total relations $\Rcal_{\ext}^{\ast}$ and $\Rcal_{\bis}^{\ast}$ that are also equivalence relations, i.e.\ in the form of 
\begin{equation}
\label{Rredox}
\Rcal_{\ext}^{\ast}=\ker(\begin{bmatrix} R_{\ext}^{\ast} & -R_{\ext}^{\ast} \end{bmatrix}), \quad
\Rcal_{\bis}^{\ast}=\ker(\begin{bmatrix} R_{\bis}^{\ast} & -R_{\bis}^{\ast} \end{bmatrix}).
\end{equation}

We now have all the ingredients to present the main results of this section.
\begin{theorem}
\label{thminredext}
$\Sigma_{\ext} \cong_{l} \Sigma_{/\Rcal_{\ext}^{\ast}}$.
\end{theorem}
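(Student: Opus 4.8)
The plan is to prove the theorem by showing that the quotient $\Sigma_{/\Rcal_{\ext}^{\ast}}$ is itself a \emph{minimal} realization of the stochastic external behavior of $\Sigma$, and then to exhibit an explicit change of coordinates identifying it with $\Sigma_{\ext}$. First I would record what $\Rcal_{\ext}^{\ast}$ actually is: since it is an equivalence relation of maximal dimension satisfying \eqref{cond1red}, and $\dim \Rcal_{\ext}^{\ast}=n+\dim\ker(R_{\ext}^{\ast})$, maximality forces $\ker(R_{\ext}^{\ast})$ to be the \emph{largest} $A$-invariant subspace contained in $\ker(C)$, i.e.\ the unobservable subspace of $\Sigma$. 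Hence $n_{\ext}=\rank(R_{\ext}^{\ast})=\rank(\Obs_{n}(A,C))$, the reduced system $\Sigma_{/\Rcal_{\ext}^{\ast}}$ is observable by the Kalman decomposition \eqref{eq:sysMatExt}, and $\Sigma_{/\Rcal_{\ext}^{\ast}}\cong_{\ext}\Sigma$ by Theorem \ref{thm:oderedLequiv}. This already gives $\dim(\Sigma_{\ext})\le n_{\ext}$, because $\Sigma_{/\Rcal_{\ext}^{\ast}}$ is one admissible externally equivalent system of dimension $n_{\ext}$ and $\Sigma_{\ext}$ is minimal.

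The central step is the matching lower bound: every $\Sigma'$ with $\Sigma'\cong_{\ext}\Sigma$ satisfies $\dim(\Sigma')\ge n_{\ext}$. By Corollary \ref{cor:Lequiv}, external equivalence of $\Sigma'$ and $\Sigma$ forces the maximal relation $\ker([\,\Obs_{\ntilde}(A',C')\ -\Obs_{\ntilde}(A,C)\,])$ to be total, so Proposition \ref{prop:total} yields $\im(\Obs_{\ntilde}(A',C'))=\im(\Obs_{\ntilde}(A,C))$. Taking dimensions gives $\rank(\Obs(A',C'))=\rank(\Obs(A,C))=n_{\ext}$, and since $\rank(\Obs(A',C'))\le\dim(\Sigma')$ we conclude $\dim(\Sigma')\ge n_{\ext}$. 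Applied to $\Sigma_{\ext}$ this closes the chain to $\dim(\Sigma_{\ext})=n_{\ext}$, and the equality $\rank(\Obs(A_{\ext},C_{\ext}))=\dim(\Sigma_{\ext})$ shows in passing that $\Sigma_{\ext}$ is itself observable.

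It then remains to build the transformation matrix. Since $\cong_{\ext}$ is an equivalence relation we have $\Sigma_{\ext}\cong_{\ext}\Sigma_{/\Rcal_{\ext}^{\ast}}$; writing the matrices of $\Sigma_{\ext}$ as $(A_a,B_a,C_a,G_a,\mu_a)$ and those of the quotient as $(A_b,B_b,C_b,G_b,\mu_b)$, and setting $Q_a=\Obs_{\ntilde}(A_a,C_a)$, $Q_b=\Obs_{\ntilde}(A_b,C_b)$ (both injective of rank $n_{\ext}$), Corollary \ref{cor:Lequiv} with Proposition \ref{prop:total} gives $\im(Q_a)=\im(Q_b)$. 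I would define $\mathbb{T}=Q_b^{+}Q_a$, so that the maximal relation $\ker([Q_a\ -Q_b])$ is exactly the graph of the invertible map $x_a\mapsto\mathbb{T}x_a$ and $Q_a=Q_b\mathbb{T}$. Reading off the block-row structure of $Q_a=Q_b\mathbb{T}$ yields $C_b\mathbb{T}=C_a$ and $A_b\mathbb{T}=\mathbb{T}A_a$, while conditions $(p_1)$ and $(p_2)$ of Proposition \ref{prop:necessequiv1} give $\mathbb{T}B_a=B_b$ and $\mathbb{T}G_a\mu_a=G_b\mu_b$; together with $\Psi_a=\Psi=\Psi_b$ from $(p_0)$, these are all but one of the requirements of Definition \ref{def:LinEquiv}.

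The step I expect to be delicate is the matching of the disturbance matrices, since external equivalence constrains $G$ only through the output-covariance condition $(p_3)$. Combining $(p_3)$ with $Q_a=Q_b\mathbb{T}$ and injectivity of $Q_b$ produces $\mathbb{T}G_aG_a^{T}\mathbb{T}^{T}=G_bG_b^{T}$, rather than the stronger $G_b=\mathbb{T}G_a$ demanded by Definition \ref{def:LinEquiv}; the two factorizations of the same positive semidefinite matrix differ by a right orthogonal factor, $G_b=\mathbb{T}G_aO$. I would close this gap by observing that replacing $G_a$ by $G_aO$ leaves the disturbance law $\mathcal{N}(0,G_aG_a^{T})$, and hence the entire stochastic system, unchanged; this orthogonal freedom is immaterial and is precisely the content of the uniqueness of $\Sigma_{\ext}$ ``up to linear transformations''. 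Choosing the representative of $\Sigma_{\ext}$ compatibly then makes $G_b=\mathbb{T}G_a$ hold exactly, all conditions of Definition \ref{def:LinEquiv} are met, and $\Sigma_{\ext}\cong_{l}\Sigma_{/\Rcal_{\ext}^{\ast}}$ follows.
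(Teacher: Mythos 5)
Your proof is correct, but it takes a genuinely different route from the paper's. The paper's own argument is much shorter: external equivalence of the quotient comes from Theorem \ref{thm:oderedLequiv}, and minimality is proved by contradiction via a dimension count on relations --- if a smaller externally equivalent system $\Sigma_{\ext}$ of dimension $n'_{\ext}<n^{\ast}_{\ext}$ existed with total relation $\ker([R'_1\ -R'_2])$ to $\Sigma$, then $\Rcal''=\ker([R'_2\ -R'_2])$ would be an external-equivalence relation on $\Sigma\times\Sigma$ of dimension $2n-n'_{\ext}>2n-n^{\ast}_{\ext}=\dim(\Rcal^{\ast}_{\ext})$, contradicting maximality. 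The linear equivalence $\cong_l$ itself is then inherited from the paper's earlier (undetailed) assertion that minimal realizations are unique up to linear transformation. You instead identify $\ker(R^{\ast}_{\ext})$ with the unobservable subspace up front (anticipating Theorem \ref{thmaxrelext}), obtain minimality directly from the observability-rank bound $\rank(\Obs(A',C'))=\rank(\Obs(A,C))\le\dim(\Sigma')$ via Proposition \ref{prop:total}, and then explicitly build $\mathbb{T}=Q_b^{+}Q_a$ and verify Definition \ref{def:LinEquiv} condition by condition. Your version is longer but buys something real: it actually proves the uniqueness-up-to-linear-transformation claim that the paper only asserts, and it correctly isolates the one genuine subtlety, namely that $(p_3)$ determines $G$ only through $GG^T$, so $G_b=\mathbb{T}G_a$ holds only after absorbing a right orthogonal factor into the choice of representative --- a point the paper's Definition \ref{def:LinEquiv} and its ``readily seen'' uniqueness claim quietly gloss over. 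The paper's dimension-count, in exchange, avoids needing the explicit form of $\Rcal^{\ast}_{\ext}$ and works directly from maximality.
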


\begin{proof}
We first note that $ \Sigma_{/\Rcal_{\ext}^{\ast}} \cong_{\ext} \Sigma$ as an application of Proposition \ref{thm:oderedLequiv} with $\Rcal_{\ext}=\Rcal_{\ext}^{\ast}$. Regarding minimality, suppose by contradiction that $\Sigma_{/\Rcal_{\ext}^{\ast}}$ is not of minimal dimension. Let $n'_{\ext}$ and $n_{\ext}^{\ast}$ be the dimensions of the state spaces of $\Sigma_{\ext}$ and $\Sigma_{/\Rcal_{\ext}^{\ast}}$, respectively. 
Hence, by the contradiction assumption we have $n'_{\ext} < n_{\ext}^{\ast}$. 
By definition of $\Sigma_{/\Rcal_{\ext}^{\ast}}$ we get that $\dim(\Rcal_{\ext}^{\ast})=2n - n_{\ext}^{\ast}$. 
Let $\Rcal'=\ker([R_1' \ -R'_2])$ 
be a total relation satisfying conditions of Definition \ref{def:ExtEquiv} with $\Sigma_1 = \Sigma_{\ext}$ and $\Sigma_2 = \Sigma$ for some matrices $R'_1\in\Real^{r\times n'_{\ext}}$ and $R'_2\in\Real^{r\times n}$. 
  Being $\Rcal'$ total, by \eqref{kost} necessarily $\rank(R'_1)=\rank(R'_2)=n'_{\ext}$, and 
	without loss of generality we can assume $r=n'_{\ext}$, so that
\begin{equation} \label{contra2}
  \rank(R'_2)=r=n'_{\ext}. 
\end{equation}
Consider now the relation $\Rcal''=\ker([R_2'\  -R'_2])$. 
It is easy to see that it satisfies conditions of Definition \ref{def:ExtEquiv} with $\Sigma_1=\Sigma_2=\Sigma$ and from 
\eqref{contra2} and \eqref{kost} we get that $\dim(\Rcal'') = 2n - n'_{\ext}$. 
  Since $n'_{\ext} < n_{\ext}^{\ast}$ then $\dim(\Rcal'')= 2n - n'_{\ext} > 2n - n_{\ext}^{\ast}= \dim(\Rcal_{\ext}^{\ast})$ which contradicts the definition of $\Rcal_{\ext}^{\ast}$ (relation of maximal dimension).
\end{proof}

\begin{theorem} \label{thminredbis}
$\Sigma_{\bis} \cong_{l} \Sigma_{/\Rcal_{\bis}^{\ast}}$.
\end{theorem}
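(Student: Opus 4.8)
The plan is to mirror the proof of Theorem \ref{thminredext}, replacing stochastic external equivalence by stochastic bisimulation throughout, and to supply the one extra ingredient, the non-degeneracy condition $(h_5)$, that has no counterpart in the external-behavior argument. First I would record that $\Sigma_{/\Rcal_{\bis}^{\ast}} \cong_{\bis} \Sigma$, which is the immediate application of Theorem \ref{thm:oderedbisim} with $\Rcal_{\bis}=\Rcal_{\bis}^{\ast}$. Writing $n'_{\bis}$ and $n_{\bis}^{\ast}$ for the state-space dimensions of the minimal system $\Sigma_{\bis}$ and of the quotient $\Sigma_{/\Rcal_{\bis}^{\ast}}$, this already gives $n_{\bis}^{\ast}\ge n'_{\bis}$, and by construction $\dim(\Rcal_{\bis}^{\ast})=2n-n_{\bis}^{\ast}$. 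It therefore suffices to prove $n_{\bis}^{\ast}\le n'_{\bis}$, for then $n_{\bis}^{\ast}=n'_{\bis}$, making $\Sigma_{/\Rcal_{\bis}^{\ast}}$ a minimal system bisimilar to $\Sigma$, hence $\cong_{\lin}\Sigma_{\bis}$ by uniqueness of the minimal reduction up to linear transformation.

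I would argue by contradiction, assuming $n'_{\bis}<n_{\bis}^{\ast}$. By minimality of $\Sigma_{\bis}$ there is a total stochastic bisimulation relation $\Rcal'=\ker([R_1'\ -R_2'])$ between $\Sigma_{\bis}$ and $\Sigma$; totality together with \eqref{kost} forces $\rank(R_1')=\rank(R_2')=n'_{\bis}$, and without loss of generality $R_2'\in\Real^{n'_{\bis}\times n}$. I then form $\Rcal''=\ker([R_2'\ -R_2'])$ on $\Real^n\times\Real^n$, which is total, is an equivalence relation by Proposition \ref{propRequiv}, and satisfies $\dim(\Rcal'')=2n-\rank(R_2')=2n-n'_{\bis}>2n-n_{\bis}^{\ast}=\dim(\Rcal_{\bis}^{\ast})$.

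The crux is to show that $\Rcal''$ is a stochastic bisimulation relation for $\Sigma$ with itself, since this contradicts the maximality of $\Rcal_{\bis}^{\ast}$. By Proposition \ref{prop:bisim} it is enough to verify, for $R_{\bis}:=R_2'$, the inclusions \eqref{cond2red} and \eqref{cond3red}, namely $A\ker(R_2')\subseteq\ker(R_2')\subseteq\ker(C)$ and $\ker(R_2')\cap\im(\Reach(A,G))=\{0\}$; each I would extract from the fact that $\Rcal'$ is a total stochastic bisimulation relation between $\Sigma_{\bis}$ and $\Sigma$. For $\ker(R_2')\subseteq\ker(C)$: if $x_2\in\ker(R_2')$ then totality of $\Rcal'$ gives $(0,x_2)\in\Rcal'$, and condition $(h_4)$ of Theorem \ref{thm:condRbisimil} yields $Cx_2=0$. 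For the invariance $A\ker(R_2')\subseteq\ker(R_2')$: Lemma \ref{lem:R1R2processes} gives $R_1\xb_1\vert_{x_1^0,\ub}\sim R_2\xb_2\vert_{x_2^0,\ub}$ for all times, so equating the conditional means \eqref{eq:condmeanRix} and subtracting the instance at $(0,0)\in\Rcal'$ leaves $R_1'A_1^t x_1^0=R_2'A^t x_2^0$ on $\Rcal'$ for all $t$; taking $(0,x_2)\in\Rcal'$ then forces $R_2'A x_2=0$, i.e. $Ax_2\in\ker(R_2')$. Finally, the non-degeneracy $\ker(R_2')\cap\im(\Reach(A,G))=\{0\}$ is exactly the $i=2$ instance of condition $(h_5)$, whose necessity for any total stochastic bisimulation relation is established in the necessity part of Theorem \ref{thm:condRbisimil}.

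I expect this last point, transferring the non-degeneracy condition $(h_5)$ to the self-relation $\Rcal''$, to be the only genuinely new obstacle relative to Theorem \ref{thminredext}; the dimension bookkeeping and the invariance/observability inclusions are routine analogues of the external-behavior argument. Once $\Rcal''$ is shown to satisfy the hypotheses of Proposition \ref{prop:bisim}, the strict dimension inequality contradicts the maximality defining $\Rcal_{\bis}^{\ast}$, forcing $n_{\bis}^{\ast}=n'_{\bis}$ and hence $\Sigma_{\bis}\cong_{\lin}\Sigma_{/\Rcal_{\bis}^{\ast}}$.
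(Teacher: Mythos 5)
Your proposal is correct and takes essentially the same route as the paper, which simply states that minimality ``follows the same steps as in the proof of Theorem \ref{thminredext}'' after invoking Theorem \ref{thm:oderedbisim}. You in fact supply more detail than the paper does, by explicitly verifying (via conditions $(h_4)$, $(h_5)$ and Lemma \ref{lem:R1R2processes}) that the self-relation $\Rcal''=\ker([R_2'\ \ -R_2'])$ meets the hypotheses of Proposition \ref{prop:bisim} --- including the transfer of the non-degeneracy condition $(h_5)$, which is the one genuinely new ingredient relative to the external-behavior argument.
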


\begin{proof}
We first note that $ \Sigma_{/\Rcal_{\bis}^{\ast}} \cong_{\bis} \Sigma$ as an application of Proposition \ref{thm:oderedbisim} with $\Rcal_{\bis}=\Rcal_{\bis}^{\ast}$. The proof of minimality follows the same steps as in the proof of Theorem \ref{thminredext} and is therefore omitted.
\end{proof}

Provided that one can compute relations $\Rcal_{\ext}^{\ast}$ and $\Rcal_{\bis}^{\ast}$, by using the above results, systems $\Sigma_{\ext}$ and $\Sigma_{\bis}$ are completely specified. The following result fully characterizes $\Rcal_{\ext}^{\ast}$.

\begin{theorem}
\label{thmaxrelext}
A total relation $\Rcal_{\ext}^{\ast}$ as in \eqref{Rredox} is obtained with $R_{\ext}^{\ast}=\Obs(A,C)$.
\end{theorem}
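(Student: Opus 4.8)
The plan is to turn the maximal--dimension requirement on $\Rcal_{\ext}^{\ast}$ into the classical geometric problem of finding the largest $A$--invariant subspace contained in $\ker(C)$, and then to identify that subspace with the unobservable subspace $\ker(\Obs(A,C))$. First I would record how the dimension of a relation $\Rcal_{\ext}=\ker([R_{\ext}\ -R_{\ext}])$ depends on its defining matrix. Since the columns of $-R_{\ext}$ span the same subspace as those of $R_{\ext}$, one has $\rank([R_{\ext}\ -R_{\ext}])=\rank(R_{\ext})$ and hence
\[
\dim(\Rcal_{\ext})=2n-\rank(R_{\ext})=n+\dim(\ker(R_{\ext})).
\]
Thus maximizing $\dim(\Rcal_{\ext})$ is the same as maximizing $\dim(\ker(R_{\ext}))$.

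Next I would use Proposition \ref{prop:Lequiv}, which states that a total equivalence relation of the form (\ref{RextEquiv}) is admissible (i.e.\ satisfies the conditions of Definition \ref{def:ExtEquiv}, equivalently conditions $(h_0)$ and $(h_4)$ with $\Sigma_1=\Sigma_2=\Sigma$) if and only if its defining matrix obeys (\ref{cond1red}), namely $A\ker(R_{\ext})\subseteq\ker(R_{\ext})\subseteq\ker(C)$. Writing $V=\ker(R_{\ext})$, the search for $\Rcal_{\ext}^{\ast}$ therefore reduces to finding a subspace $V$ of maximal dimension that is $A$--invariant and contained in $\ker(C)$. I would then invoke the standard fact that this maximal subspace is exactly the unobservable subspace $\ker(\Obs(A,C))$. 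For the inclusion $V\subseteq\ker(\Obs(A,C))$, observe that if $V$ is $A$--invariant with $V\subseteq\ker(C)$, then $A^k v\in V\subseteq\ker(C)$ for every $v\in V$ and every $k\ge 0$, so $CA^k v=0$ for all $k$; by the Cayley--Hamilton theorem and the definition (\ref{eq:obsdef}) this means $v\in\bigcap_{k\ge 0}\ker(CA^k)=\ker(\Obs(A,C))$. Conversely $\ker(\Obs(A,C))$ is itself $A$--invariant and lies in $\ker(C)$ (as $C$ is its first block row), so it is admissible and is the unique maximal choice of $V$.

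It then follows that taking $R_{\ext}^{\ast}=\Obs(A,C)$ gives $\ker(R_{\ext}^{\ast})=\ker(\Obs(A,C))$, so that $\Rcal_{\ext}^{\ast}=\ker([R_{\ext}^{\ast}\ -R_{\ext}^{\ast}])$ satisfies (\ref{cond1red}) and attains the maximal dimension. To finish I would verify the remaining structural properties: $\Rcal_{\ext}^{\ast}$ is total by Proposition \ref{prop:total}, since $R_1=R_2=R_{\ext}^{\ast}$ forces $\im(R_1)=\im(R_2)$, and it is an equivalence relation by Proposition \ref{propRequiv}, again because $R_1=R_2$. I expect no serious difficulty in the routine parts; the only step requiring care is the maximality argument, i.e.\ showing that no $A$--invariant subspace of $\ker(C)$ can strictly exceed $\ker(\Obs(A,C))$, which is the geometric core of the statement.
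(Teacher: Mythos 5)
Your argument is correct, but it follows a different route from the paper's. The paper's proof is a one-step particularization: it invokes Theorem \ref{thm:condRequiv} and Corollary \ref{cor:maximalRequiv}, which already identify $\ker([\Obs(A_1,C_1)\ -\Obs(A_2,C_2)])$ as the largest subspace relation ensuring stochastic external equivalence, and sets $\Sigma_1=\Sigma_2=\Sigma$. You instead work through Proposition \ref{prop:Lequiv}, translate maximality of $\dim(\Rcal_{\ext})$ into maximality of the $A$--invariant subspace $\ker(R_{\ext})\subseteq\ker(C)$, and then re-derive the classical fact that this maximal subspace is the unobservable subspace $\ker(\Obs(A,C))$. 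What the paper's route buys is that maximality is obtained directly over \emph{all} subspace relations (condition $(p_4)$ forces any admissible $\Rcal$ into $\ker(\Obs_{\ntilde}(\Atilde,\Ctilde))$), whereas your dimension count $\dim(\Rcal_{\ext})=n+\dim(\ker(R_{\ext}))$ only compares relations already in the equivalence form $\ker([R_{\ext}\ -R_{\ext}])$; to conclude that nothing is lost by this restriction you are implicitly relying on the reduction carried out in the text preceding \eqref{Rredox}, where \eqref{kost} shows that replacing $\ker([R_1\ -R_2])$ by $\ker([R_i\ -R_i])$ preserves both admissibility and dimension. It would be worth making that dependence explicit. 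What your route buys in exchange is a self-contained and more transparent identification of \emph{why} the answer is $\Obs(A,C)$: the maximal $A$--invariant subspace of $\ker(C)$ is $\bigcap_{k\ge 0}\ker(CA^k)$, which via Cayley--Hamilton is $\ker(\Obs(A,C))$. Your verification of totality via Proposition \ref{prop:total} and of the equivalence-relation property via Proposition \ref{propRequiv} is routine and correct.
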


\begin{proof}
The results in Theorem \ref{thm:condRequiv} and \ref{cor:maximalRequiv} state
that any relation that ensures stochastic external equivalence between two system $\Sigma_1$ and $\Sigma_2$ is necessarily
a subspace of $\ker([\Obs(A_1,C_1)\ -\Obs(A_2,C_2)])$, and therefore the maximal of such relations
coincides with $\ker([\Obs(A_1,C_1)\ -\Obs(A_2,C_2)])$ (see also Remark \ref{rem:condstochextequiv}).
These results, particularized for $\Sigma_i=\Sigma$, $i=1,2$, prove that 
the maximal relation that ensures the stochastic external equivalence of a system
with itself is $\ker([\Obs(A,C)\ -\Obs(A,C)])$, which is the thesis.
\end{proof}

By combining Theorems \ref{thminredext} and \ref{thmaxrelext} we get that $\Sigma_{\ext}$ can be chosen as the observable sub--system of $\Sigma$ and it can be easily computed via the Kalman decomposition \cite{Kailath}, by choosing
a nonsingular matrix $\mathbb{T}_{\ext}= [ \mathbb{T}_{\ext}^1 \ \mathbb{T}_{\ext}^2]\in\Real^{n\times n}$
such that $\im(\mathbb{T}_{\ext}^2)=\ker(\Obs(A,C))$.
A system $\Sigma_{\ext}$ of smallest dimension which has equivalent stochastic external behavior of $\Sigma$ is
$\Sigma_{/\Rcal_{\ext}}$ given by \eqref{SysRedExt}, whose dimension is $n-\rank\big(\ker(\Obs(A,C))\big)$.

We now discuss the computation of $\Rcal_{\bis}^{\ast}$.
The computation of the maximal bisimulation relation is generally done through fixed--points operators, see e.g.\ \cite{ModelChecking,BisimSchaft}. In particular, in \cite{BisimSchaft} an algorithm is proposed which converges in finite steps to the desired maximal bisimulation relation. Crucial in this approach is the property of closeness of bisimulation relations with respect to sum of subspaces. 
Unfortunately, this approach cannot be used here because similar closeness properties do not hold. The following example clarifies this issue.
\begin{example}
Consider a linear system $\Sigma$ as in (\ref{eq:System}), where:
\[
A =
\left[
\begin{array}
{ccc}
\alpha & 0 & 0\\ 0 & \alpha  & 0\\ 0 & 0 & \beta 
\end{array}
\right], G =
\left[
\begin{array}
{c}
1 \\ 0 \\ 0 
\end{array}
\right],
C =
\left[
\begin{array}
{ccc}
0 & 0 & 1 
\end{array}
\right],
\]
with $\alpha,\beta\in\Real$ and matrix $B$ of compatible dimensions. By applying Theorem \ref{thm:condRbisimil} with $\Sigma_1=\Sigma_2=\Sigma$ it is possible to show that the equivalence relations
\[
\begin{array}
{l}
\Rcal_{b}=
\ker
\left(
\begin{bmatrix} 
0 &  0 & 1 &  0 &  0 & -1\\
1 & -1 & 0 & -1 &  1 & 0
\end{bmatrix}
\right),\\
\Rcal'_{b}=
\ker
\left(
\begin{bmatrix} 
1 &  0 & 0 & -1 &  0 & 0\\
0 &  0 & 1 & 0  &  0 & -1
\end{bmatrix}
\right)
\end{array}
\]
are total stochastic bisimulation relations between $\Sigma$ and itself. By a straightforward computation we get
\[
\Rcal_{b}+\Rcal'_{b}=\ker\left(
\begin{bmatrix} 
0 &  0 & 1 & 0 & 0 & -1
\end{bmatrix}
\right)
\]
which is not a stochastic bisimulation relation between $\Sigma$ and itself because condition ($h_5$) of Theorem \ref{thm:condRbisimil} (with $\Sigma_1=\Sigma_2 = \Sigma$) is violated.
\end{example}

Non closeness of stochastic bisimulation relations with respect to sum of subspaces poses serious limitations to the use of fixed--point approaches to compute $\Rcal_{\bis}^{\ast}$ in the general case.
However, Proposition \ref{prop:bisim} provides a characterization of $\Rcal_{\bis}^\ast$ that can be helpful for its 
computation and for the consequent computation of $\Sigma_{/ \Rcal_{\bis}^\ast}\cong_{\bis} \Sigma_{\bis}$. 
   Indeed, by Theorem \ref{thm:oderedbisim} a systems $\Sigma_{\bis}$ of minimal dimension that is equivalent via stochastic bisimulation to $\Sigma$
can be computed by finding a total relation $\Rcal_{\bis}^\ast$ of maximal dimension, and this can be computed by 
finding an $A$-invariant subspace $\Ucal\subseteq\Real^n$ of maximal dimension
such that 
\[
\Ucal\subseteq\ker\big(\Obs(A,C)\big),\quad \text{and}\quad 
\Ucal\cap\im\big(\Reach(A,G)\big)=\{0\}.
\]
(see Proposition \ref{prop:bisim}).

Under few additional assumptions we can provide an explicit expression for a relation $\Rcal_{\bis}^\ast$ of maximal dimension, and therefore
of a system $\Sigma_{\bis}$ of minimal dimension that is equivalent via stochastic bisimulation to $\Sigma$.
Let 
$\mathcal{G}=\im(\Reach(A,G))$, $\mathcal{Q}=\ker(\Obs(A,C))$.
Let $\lambda_k\in\Compl$, $k\in\{1,\dots,\delta\}$, $\delta\le n$,  
denote the eigenvalues of $A$, and let $\mathcal{S}_k\subseteq\Real^n$ denote the 
generalized real eigenspaces associated to $\lambda_k$
(subspaces $\mathcal{S}_k$ are $A$-invariant).
Eigenspace $\mathcal{S}_k$ is said to be:
\begin{itemize}
\item totally reachable (from the noise), if $\mathcal{S}_k \subseteq \mathcal{G}$;
\item totally unreachable (from the noise), if $\mathcal{S}_k \cap \mathcal{G} = \{0\}$;
\item totally observable, if $\mathcal{S}_k \cap \mathcal{Q} = \{0\}$;
\item totally unobservable, if $\mathcal{S}_k \subseteq \mathcal{Q}$.
\end{itemize}

\begin{theorem}  \label{thmaxrelbis}
Suppose that any generalized eigenspace $\mathcal{S}_k$ of matrix $A$ 
is either totally reachable or totally unreachable,
and it is either totally unobservable or totally observable, i.e.
\begin{equation}   \label{AssumptH}
\left[[\mathcal{S}_k \subseteq \mathcal{G}] \vee [\mathcal{S}_k \cap \mathcal{G} = \{0\}]\right]
\wedge
\left[[\mathcal{S}_k \subseteq \mathcal{Q}] \vee [\mathcal{S}_k \cap \mathcal{Q} = \{0\}]\right] .
\end{equation}
Let $\Ical=\{k_1,\dots,k_{\bar\delta}\}\subset\{1,\dots,\delta\}$ be a subset of indexes such that
the eigenspaces $\mathcal{S}_{k_i}$, $k_i\in\Ical$, are totally unreachable and totally unobservable
(i.e, $S_{k_i}\subseteq \Qcal$ and $S_{k_i}\cap\Gcal=\{0\}$), and
define $\mathcal{S}_{\Sigma}=\mathcal{S}_{k_1} \oplus \mathcal{S}_{k_2} \oplus ... \oplus \mathcal{S}_{k_{\bar\delta}}\subseteq\Real^n$.
   Then, a total relation $\Rcal_{\bis}^{\ast}$ of maximal dimension takes the form \eqref{Rredox} 
where the (non unique) matrix $R_{\bis}^{\ast}$ is chosen such that $\mathcal{S}_{\Sigma}=\ker(R_{\bis}^{\ast})$.
\end{theorem}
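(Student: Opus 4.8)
The plan is to reduce the statement to an extremal problem about subspaces and then solve it using the primary (generalized eigenspace) decomposition of $A$. As noted in the discussion preceding the theorem, a total relation $\Rcal_{\bis}^{\ast}$ of maximal dimension is obtained by choosing $R_{\bis}^{\ast}$ so that $\Ucal:=\ker(R_{\bis}^{\ast})$ is an $A$-invariant subspace of maximal dimension satisfying $\Ucal\subseteq\Qcal$ and $\Ucal\cap\Gcal=\{0\}$; here the inclusion $\Ucal\subseteq\ker(C)$ in \eqref{cond2red} is equivalent to $\Ucal\subseteq\Qcal=\ker(\Obs(A,C))$ once $\Ucal$ is $A$-invariant, since then $CA^t\Ucal=0$ for all $t$. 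It therefore suffices to prove that $\Scal_{\Sigma}$ is a maximal such subspace, after which setting $\ker(R_{\bis}^{\ast})=\Scal_{\Sigma}$ yields a relation of the form \eqref{Rredox} with the asserted property.

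The main tool I would use is the fact that every $A$-invariant subspace $V\subseteq\Real^n$ splits along the primary decomposition $\Real^n=\Scal_1\oplus\cdots\oplus\Scal_{\delta}$, namely $V=\bigoplus_{k}(V\cap\Scal_k)$. This holds because the spectral projections onto the $\Scal_k$ are polynomials in $A$ and hence map $V$ into itself. Since $\Gcal=\im(\Reach(A,G))$ and $\Qcal=\ker(\Obs(A,C))$ are both $A$-invariant, they split in the same way, and under assumption \eqref{AssumptH} each summand is trivial or full: precisely $\Gcal\cap\Scal_k=\Scal_k$ iff $\Scal_k$ is totally reachable and $\Gcal\cap\Scal_k=\{0\}$ iff totally unreachable, and likewise $\Qcal\cap\Scal_k=\Scal_k$ iff totally unobservable and $\Qcal\cap\Scal_k=\{0\}$ iff totally observable.

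With these decompositions I would then argue componentwise. Writing $\Ucal_k=\Ucal\cap\Scal_k$ for an arbitrary admissible $A$-invariant $\Ucal$, the inclusion $\Ucal\subseteq\Qcal$ gives $\Ucal_k\subseteq\Qcal\cap\Scal_k$, so $\Ucal_k=\{0\}$ whenever $\Scal_k$ is totally observable. Similarly $\Ucal\cap\Gcal$ is $A$-invariant and equals $\bigoplus_k\big(\Ucal_k\cap(\Gcal\cap\Scal_k)\big)$, so $\Ucal\cap\Gcal=\{0\}$ forces $\Ucal_k\cap(\Gcal\cap\Scal_k)=\{0\}$ for every $k$, hence $\Ucal_k=\{0\}$ whenever $\Scal_k$ is totally reachable. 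Consequently $\Ucal_k$ can be nonzero only for indices that are simultaneously totally unreachable and totally unobservable, i.e.\ $k\in\Ical$, giving $\dim(\Ucal)=\sum_{k\in\Ical}\dim(\Ucal_k)\le\sum_{k\in\Ical}\dim(\Scal_k)=\dim(\Scal_{\Sigma})$. Finally $\Scal_{\Sigma}=\bigoplus_{k\in\Ical}\Scal_k$ is itself $A$-invariant, lies in $\Qcal$ because each $\Scal_{k_i}$ is totally unobservable, and meets $\Gcal$ only at the origin because each $\Scal_{k_i}$ is totally unreachable; thus it attains the bound and is a maximal admissible subspace.

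The step I expect to demand the most care is the componentwise reduction of the constraint $\Ucal\cap\Gcal=\{0\}$, which rests on the $A$-invariant splitting lemma together with the dichotomy in \eqref{AssumptH}. It is exactly this dichotomy that makes $\Gcal\cap\Scal_k$ and $\Qcal\cap\Scal_k$ trivial or full, forcing a maximal admissible $\Ucal$ to be a direct sum of entire eigenspaces rather than a subspace straddling some $\Scal_k$; without it an admissible subspace could capture only part of an eigenspace and $\Scal_{\Sigma}$ would fail to be maximal. Establishing this maximality is therefore the real content of the argument, whereas the admissibility of $\Scal_{\Sigma}$ itself is a direct verification.
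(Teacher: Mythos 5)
Your proof is correct and follows the same overall strategy as the paper's: verify that $\Scal_{\Sigma}$ is $A$-invariant, contained in $\Qcal$, and meets $\Gcal$ trivially, and then invoke Proposition \ref{prop:bisim} to conclude that a relation $\Rcal_{\bis}^{\ast}$ of maximal dimension is obtained from an admissible subspace of maximal dimension. The one place you go beyond the paper is the maximality step: the paper asserts that $\Scal_{\Sigma}$ is of maximal dimension essentially ``by its definition,'' whereas you actually establish it via the splitting $V=\bigoplus_{k}(V\cap\Scal_k)$ of any $A$-invariant subspace along the primary decomposition (spectral projections being polynomials in $A$) combined with the dichotomy \eqref{AssumptH}, which forces every admissible $\Ucal$ to be supported on the indices in $\Ical$ --- a welcome completion of the argument rather than a different route.
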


\begin{proof}
Subspace $\mathcal{S}_{\Sigma}$, given as the sum of $A$--invariant subspaces is itself $A$--invariant.
  Moreover, $S_{k_i}\subseteq \Qcal$ and $S_{k_i}\cap\Gcal=\{0\}$ $\forall k_i\in\Ical$
implies that $\mathcal{S}_{\Sigma}\subseteq \mathcal{Q}$ and $\mathcal{S}_{\Sigma}\cap \mathcal{G} = \{0\}$. 
  Hence, $\mathcal{S}_{\Sigma}$ satisfies \eqref{cond2red} and \eqref{cond3red}. 
  Moreover, by its definition, $\mathcal{S}_{\Sigma}$ is of maximal dimension. 
A straightforward consequence of Proposition \ref{prop:bisim} is that a total stochastic bisimulation and equivalence relation $\Rcal_{\bis}$ as in (\ref{RbisEquiv}) between $\Sigma$ and itself is of maximal dimension if and only if $\ker(R_{\bis})$ satisfies (\ref{cond2red}) and (\ref{cond3red}) and is of maximal dimension. Hence, the result follows.
\end{proof}

By combining Theorems \ref{thminredbis} and \ref{thmaxrelbis} we get that if eigenspaces $\mathcal{S}_k$ of matrix $A$ in $\Sigma$ satisfy (\ref{AssumptH}) then $\Sigma_{\bis}$ can be chosen as the sub--system of $\Sigma$ with all and only modes of $\Sigma$ 
that are either reachable from the noise or observable. 
  Such sub--system can be easily computed via the Kalman decomposition \cite{Kailath} with the additional requirement of choosing a complementary space to $\mathcal{G} \cap \mathcal{Q}$ in $\mathcal{Q}$ that is $A$--invariant. 
For the computation of $\Sigma_{\bis}$ it is sufficient to choose an invertible matrix 
$\mathbb{T}_{\bis}=[\mathbb{T}_{\bis}^1\ \mathbb{T}_{\bis}^2]\in\Real^{n\times n}$
such that $\im(\mathbb{T}_{\bis}^2)=\mathcal{S}_{\Sigma}$ and 
$\im(\mathbb{T}_{\bis}^1)=\mathcal{S}_{h_1} \oplus \mathcal{S}_{h_2} \oplus ... \oplus \mathcal{S}_{h_{\delta -\bar\delta}}$,
where $\{h_1,\dots,h_{\delta -\bar\delta}\}=\{1,\dots,\delta\}\setminus \Ical$,
and to use matrix $\mathbb{T}_{\bis}$ to change the coordinates to $\Sigma$ as in \eqref{eq:sysMatBis},
and select the subsystem \eqref{SysRedBis}.

Although the assumption of Theorem \ref{thmaxrelbis} is not demanding
(for instance, it is trivially fulfilled for all systems $\Sigma$ such that 
the matrix $A$ has distinct eigenvalues), it can be 
weakened as follows:

\begin{theorem}  \label{thmaxrelbis_simp}
Suppose that any generalized eigenspace $\mathcal{S}_k$ of matrix $A$ can be decomposed as
$\Scal_k=\Scal_k^1\oplus \Scal_k^2$ with both $S_k^1$ and $S_k^2$ $A$-invariant
and $\Scal_k^1$ such that
\begin{equation} \label{eq:propS1k}
\Scal_k^1\subseteq\Qcal,\quad \Scal_k^1\cap\Gcal=\{0\}
\end{equation}
(note that $S_k^1$ or $S_k^2$ can vanish for some $k$).\\
Define 
\begin{equation} 
\Scal_\Sigma^1=\Scal_1^1\oplus\Scal_2^1\oplus\cdots\oplus\Scal_\delta^1.
\end{equation}
Then, a total relation $\Rcal_{\bis}^{\ast}$ of maximal dimension takes the form \eqref{Rredox} 
where the (non unique) matrix $R_{\bis}^\ast$ is chosen such that $\Scal_\Sigma^1=\ker(R_{\bis}^\ast)$.
\end{theorem}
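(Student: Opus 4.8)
The plan is to mirror the proof of Theorem \ref{thmaxrelbis}, replacing the whole--eigenspace dichotomy by the per--eigenspace splitting granted by the hypothesis. Throughout I choose, for each $k$, the subspace $\Scal_k^1$ to be of \emph{maximal} dimension among the $A$--invariant subspaces of $\Scal_k$ satisfying \eqref{eq:propS1k}; the hypothesis guarantees at least one such subspace exists, and a largest one exists by finite dimensionality. I then set $\Scal_\Sigma^1=\Scal_1^1\oplus\cdots\oplus\Scal_\delta^1$ and let $R_{\bis}^\ast$ be any matrix with $\ker(R_{\bis}^\ast)=\Scal_\Sigma^1$, so that $\Rcal_{\bis}^\ast=\ker([R_{\bis}^\ast\ -R_{\bis}^\ast])$ is in the form \eqref{Rredox}; since $R_1=R_2=R_{\bis}^\ast$, by Proposition \ref{propRequiv} it is automatically an equivalence relation. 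It remains to prove that $\Scal_\Sigma^1$ yields a total stochastic bisimulation relation of maximal dimension.

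First I would verify that $\Scal_\Sigma^1$ meets the hypotheses of Proposition \ref{prop:bisim}, i.e.\ conditions \eqref{cond2red} and \eqref{cond3red} with $\ker(R_{\bis})=\Scal_\Sigma^1$. Being a direct sum of $A$--invariant subspaces, $\Scal_\Sigma^1$ is $A$--invariant; and since every summand satisfies $\Scal_k^1\subseteq\Qcal\subseteq\ker(C)$, we get $\Scal_\Sigma^1\subseteq\ker(C)$, which gives \eqref{cond2red}. The substantive point is \eqref{cond3red}, namely $\Scal_\Sigma^1\cap\Gcal=\{0\}$. Here the key observation is that $\Gcal=\im(\Reach(A,G))$ is $A$--invariant, and hence, by the standard fact that an $A$--invariant subspace splits along the generalized eigenspaces, $\Gcal=\bigoplus_k(\Gcal\cap\Scal_k)$. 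Taking $v\in\Scal_\Sigma^1\cap\Gcal$ and writing $v=\sum_k v_k$ with $v_k\in\Scal_k^1\subseteq\Scal_k$ and also $v=\sum_k g_k$ with $g_k\in\Gcal\cap\Scal_k\subseteq\Scal_k$, the directness of $\bigoplus_k\Scal_k=\Real^n$ forces the $\Scal_k$--components to agree, so $v_k=g_k\in\Scal_k^1\cap\Gcal=\{0\}$ for every $k$, whence $v=0$. This establishes \eqref{cond3red}, and Proposition \ref{prop:bisim} then yields that $\Rcal_{\bis}^\ast$ is a total stochastic bisimulation relation between $\Sigma$ and itself.

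For maximality, recall from the discussion preceding Theorem \ref{thmaxrelbis} (together with Proposition \ref{prop:bisim}) that a total bisimulation and equivalence relation of the form \eqref{Rredox} is of maximal dimension precisely when $\ker(R_{\bis})$ is a maximal--dimension $A$--invariant subspace $\Ucal$ with $\Ucal\subseteq\Qcal$ and $\Ucal\cap\Gcal=\{0\}$. Thus I must show $\dim\Ucal\le\dim\Scal_\Sigma^1$ for any such $\Ucal$. Using again the eigenspace splitting of $A$--invariant subspaces, write $\Ucal=\bigoplus_k\Ucal_k$ with $\Ucal_k=\Ucal\cap\Scal_k$. Each $\Ucal_k$ is $A$--invariant, lies in $\Qcal$, and satisfies $\Ucal_k\cap\Gcal\subseteq\Ucal\cap\Gcal=\{0\}$, so $\Ucal_k$ is a competitor for $\Scal_k^1$ inside $\Scal_k$; by the maximal choice of $\Scal_k^1$ we get $\dim\Ucal_k\le\dim\Scal_k^1$. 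Summing over $k$ gives $\dim\Ucal\le\dim\Scal_\Sigma^1$, which proves maximality and completes the argument.

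The main obstacle is the passage from the purely local conditions \eqref{eq:propS1k} to the two global statements $\Scal_\Sigma^1\cap\Gcal=\{0\}$ and maximality; both rest on the single structural fact that the $A$--invariant subspaces $\Gcal$ and any competitor $\Ucal$ decompose as direct sums of their intersections with the generalized eigenspaces $\Scal_k$, after which the directness of $\bigoplus_k\Scal_k$ lets one argue componentwise. A secondary point demanding care is the reading of the hypothesis: the conclusion holds only when each $\Scal_k^1$ is taken of maximal dimension among the $A$--invariant subspaces of $\Scal_k$ obeying \eqref{eq:propS1k}, since the bare existence of a splitting $\Scal_k=\Scal_k^1\oplus\Scal_k^2$ does not by itself force $\Scal_\Sigma^1$ to be maximal.
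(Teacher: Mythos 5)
Your proof is correct and follows the same route as the paper's (verify the conditions of Proposition \ref{prop:bisim} for $\Scal_\Sigma^1$, then argue maximality), but it is considerably more complete: where the paper merely asserts that $\Scal_\Sigma^1$ inherits \eqref{eq:propS1k} and ``is of maximal dimension by its definition,'' you supply the missing ingredient for both claims, namely that the $A$--invariant subspaces $\Gcal$ and any competitor $\Ucal$ split as direct sums of their intersections with the generalized eigenspaces, after which one argues componentwise. You are also right to flag that the conclusion only holds under the reading that each $\Scal_k^1$ is chosen of maximal dimension among the $A$--invariant subspaces of $\Scal_k$ satisfying \eqref{eq:propS1k} (otherwise $\Scal_k^1=\{0\}$ satisfies the hypothesis verbatim and the conclusion fails); the paper leaves this implicit.
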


\begin{proof}
As in the proof of Theorem \ref{thmaxrelbis}, $\Scal_\Sigma^1$, is a sum of $A$-invariant subspaces
satisfying \eqref{eq:propS1k},
and therefore is itself $A$--invariant, and satisfies \eqref{eq:propS1k},
i.e.\ $\Scal_\Sigma\subseteq \Qcal$ and $\Scal_\Sigma^1\cap \Gcal = \{0\}$,
and hence satisfies \eqref{cond2red} and \eqref{cond3red} and is of maximal dimension. 
The same discussion made in the proof of Theorem \ref{thmaxrelbis} leads to the thesis.
\end{proof}

Note that the assumption of Theorem \ref{thmaxrelbis_simp} is weaker than that of Theorem \ref{thmaxrelbis}, 
in that the assumption \eqref{AssumptH} coincides with the assumption made in Theorem \ref{thmaxrelbis_simp}
with the additional condition that for each $k\in\Ical$ either $S_k^1=\{0\}$ or $S_k^2=\{0\}$
(for $k_i\in\Ical$ we have $\Scal_{k_i}^1=\Scal_{k_i}$ and $\Scal_{k_i}^2=\{0\}$).

\section{Connection with Related Literature} \label{sec7}
In this section we establish connections with the notions of bisimulation equivalence given for probabilistic chains and Markov processes 
and with stochastic linear realization theory. 

\medskip

\textit{Bisimulation equivalence for probabilistic chains and Markov processes. } Definition \ref{defstochbisimnondeg} has been inspired by the notion of probabilistic bisimulation given for probabilistic chains in \cite{Larsen:91}. The notion of \cite{Larsen:91} (corresponding to Definition 3.5.3 of \cite{HermannsBook}) has been extended in 
Definition 2.5 of \cite{Desharnais2004} (see also Definition 2.6 of \cite{Desharnais2000}) to labelled Markov processes featuring continuous state space. Definition 2.5 of \cite{Desharnais2004} coincides with Definition 3.5.3 of \cite{HermannsBook} except for the fact that it applies not to equivalence classes but to measurable $\mathcal{R}$--closed sets; we recall that a set $X$ is $\mathcal{R}$--closed if $\mathcal{R}(X)\subseteq X$; if $\mathcal{R}$ is reflexive, this becomes $\mathcal{R}(X) = X$. As also pointed out in \cite{Desharnais2000}, if $\mathcal{R}$ is an equivalence relation, a set is $\mathcal{R}$--closed if only if it is a union of equivalence classes. 
The key difference between Definition 2.5 of \cite{Desharnais2004} and Definition \ref{defstochbisimnondeg} is that while the former considers only $\mathcal{R}$--closed sets measurable sets, the latter considers any measurable set. By Corollary \ref{cor:nondeg}, linear systems with non--degenerate disturbances cannot be reduced no smaller ones while preserving equivalence via stochastic bisimulation. Reduction is possible in the case of linear systems with degenerate disturbances, as shown in Theorem \ref{thminredbis}. 
Definition 2.5 in \cite{Desharnais2004} of bisimulation for labelled Markov processes, instead, allows finding equivalent states even in the case of non--degenerate disturbances. This is a consequence of the fact that conditions in Definition \ref{defstochbisimnondeg} are required to hold for all measurable sets and not only for $\mathcal{R}$--closed sets as in \cite{Desharnais2004}. On the other hand, when Definition 2.5 in \cite{Desharnais2004} is used for reduction purposes, one gets that the original labelled Markov process and the labelled Markov process obtained by aggregating equivalent states are not bisimilar (in the sense of Definition 2.5 in \cite{Desharnais2004}), whereas in our framework linear $\Sigma$ and its reduced one $\Sigma_{/\mathcal{R}}$ are equivalent via stochastic bisimulation, as formally shown in Theorem \ref{thm:oderedbisim}. When relaxing conditions in Definitions \ref{defstochbisimnondeg} and \ref{def:stochbisim} to hold not for all measurable sets but only for $\mathcal{R}$--closed sets (as in Definition 2.5 of \cite{Desharnais2004}), meaning in our framework that conditions in (i) and (ii) are requested to hold only for measurable sets $X_1$ and $X_2$ satisfying
\[
\mathcal{R}^{-1}(\mathcal{R}(X_1))=X_1, \quad \mathcal{R}(\mathcal{R}^{-1}(X_2))=X_2,
\]
Definitions \ref{defstochbisimnondeg} and \ref{def:stochbisim} coincide. 
Consequently, geometric conditions in Theorem \ref{thm:condRbisimil} change. More specifically, conditions ($h_0$)--($h_4$) are still needed while condition $(h_5)$ is not. In fact, if $X_1$ is $\mathcal{R}$--closed,  equality in (\ref{cond511}) would be true independently from condition ($h_5$) (and Lemma \ref{prop:stochbisim}). As a consequence, conditions in Theorems \ref{thm:condRequiv} and \ref{thm:condRbisimil}   would coincide, meaning that Definitions \ref{def:ExtEquiv}, \ref{defstochbisimnondeg} and \ref{def:stochbisim} would coincide, as well. 

\medskip

\textit{Stochastic linear realization theory. } 
	Stochastic linear realization problems deal with realizing a stationary zero mean Gaussian stochastic process through a stochastic linear system $\Sigma$ in the form of \eqref{eq:systemSi} with no control inputs, see e.g. \cite{Kalman1,Faurre} and also \cite{shaftrealization}. 
    For the output process 
$\yb(t,x^0,\ub,\wb,\nub)$ generated by $\Sigma$ with $\ub= \mathbf{0}$ to be stochastically equivalent to the given zero-mean process, the disturbances $\wb$ and $\nub$ must have zero mean.
   Moreover, stationarity of the process implies that it can be realized with an
asymptotically stable system, where the initial condition $x^0$ is a zero mean random vector with covariance $\Psi_x$ satisfying the steady state condition
\begin{equation} \label{eq:statpsix}
\Psi_x=A \Psi_x A^T + GG^T.
\end{equation}
A matrix $\Psi_x$ satisfying \eqref{eq:statpsix} can be computed as
\begin{equation} 
\Psi_x = \sum_{k=0}^\infty A^{k}G G^T (A^T)^{k},
\end{equation}
and is the unique solution if the pair $(A,G)$ is reachable. 
For the sake of generality, we point out that an unstable system can realize 
a stationary stochastic process if the observable subsystem is stable
or if the unstable observable subsystem is not reached by the noise and not excited
by the initial condition. By restricting our attention to stable systems we can state the following:

\begin{proposition}
Two stable linear systems $\Sigma_1$ and $\Sigma_2$ as in \eqref{eq:systemSi} with $\ub_i= \zerob$
and $\mub_i=\zerob$, $i=1,2$, realize the same zero mean Gaussian stochastic process
if and only if they satisfy conditions $(p_0)$ and $(p_3)$ of Proposition \ref{prop:necessequiv1}.
\end{proposition}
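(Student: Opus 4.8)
The plan is to reduce stochastic equivalence of the two output processes to equality of their first two moments, since both processes are Gaussian. With $\ub_i=\zerob$, $\mub_i=\zerob$, and a zero mean stationary initial condition $x_i^0$, the mean of each output process vanishes identically, so by the definition of stochastic equivalence the two systems realize the same zero mean Gaussian process if and only if their autocovariance functions coincide. Specializing \eqref{eq:covyprocess} to the stationary regime, where $x_i^0$ has covariance $\Psi_{x,i}$ solving \eqref{eq:statpsix}, the lag-$k$ autocovariance of $\yb_i$ is
\[
\Lambda_i(k)=C_iA_i^{k}\Psi_{x,i}C_i^{T}+\Psi_i\,\delta_{k,0},\qquad
\Psi_{x,i}=\sum_{j\ge 0}A_i^{j}G_iG_i^{T}(A_i^{j})^{T},
\]
the series converging by asymptotic stability. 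Thus the whole statement reduces to the equivalence of $\Lambda_1=\Lambda_2$ with conditions $(p_0)$ and $(p_3)$ of Proposition \ref{prop:necessequiv1}.

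For sufficiency I would compute directly. Substituting the expression for $\Psi_{x,i}$ gives, for every $k\ge 0$,
\[
C_iA_i^{k}\Psi_{x,i}C_i^{T}=\sum_{j\ge 0}C_iA_i^{k+j}G_iG_i^{T}(A_i^{j})^{T}C_i^{T},
\]
so that applying $(p_3)$ termwise with $(a,b)=(k+j,j)$ forces $C_1A_1^{k}\Psi_{x,1}C_1^{T}=C_2A_2^{k}\Psi_{x,2}C_2^{T}$ for all $k\ge 0$; together with $(p_0)$, which matches the diagonal term $\Psi_i\,\delta_{k,0}$, this yields $\Lambda_1=\Lambda_2$ and hence the two processes coincide.

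For necessity the idea is to read the two conditions off the covariance structure. Rather than the lag-only stationary covariance, I would retain the full dependence on both time indices given by \eqref{eq:covyprocess},
\[
\cov\big(\yb_i(t),\yb_i(\tau)\big)=\Psi_i\,\delta_{t,\tau}+\sum_{h=0}^{\tau-1}C_iA_i^{t-\tau+h}G_iG_i^{T}(A_i^{h})^{T}C_i^{T},\qquad t\ge\tau,
\]
in which the white measurement noise contributes $\Psi_i$ only on the diagonal $t=\tau$. Evaluating at $t=\tau=0$ gives $\Psi_1=\Psi_2$, i.e.\ $(p_0)$; after subtracting this contribution, inspecting the lowest-order terms as $t$ and $\tau$ increase recovers $C_1G_1G_1^{T}C_1^{T}=C_2G_2G_2^{T}C_2^{T}$, and an induction that peels off one summand at a time, using the transpose symmetry $C_iA_i^{a}G_iG_i^{T}(A_i^{b})^{T}C_i^{T}=\big(C_iA_i^{b}G_iG_i^{T}(A_i^{a})^{T}C_i^{T}\big)^{T}$, then yields every $(p_3)$ term.

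The step I expect to be the main obstacle is exactly this identification in the necessity direction. The steady-state autocovariance only exposes the $A$-weighted sums $C_iA_i^{k}\Psi_{x,i}C_i^{T}$, and from these alone neither the separation of the measurement-noise covariance $\Psi_i$ from the process-noise contribution nor the individual moment matrices $C_iA_i^{a}G_iG_i^{T}(A_i^{b})^{T}C_i^{T}$ of $(p_3)$ are determined. What restores identifiability is keeping $\cov(\yb_i(t),\yb_i(\tau))$ as a genuine function of the two indices (equivalently, tracking the transient build-up and not only the steady state), and the delicate points to justify carefully are that this finite-time information is indeed available from the realized process and that the inductive peeling correctly isolates the diagonal noise term before the process-noise moments are extracted.
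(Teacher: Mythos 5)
The paper states this proposition without any proof, so there is nothing to compare your argument against; I can only assess it on its own terms. Your reduction to second moments is the right move (both output processes are Gaussian and zero mean), and your sufficiency direction is correct and complete: expanding $C_iA_i^k\Psi_{x,i}C_i^T=\sum_{j\ge 0}C_iA_i^{k+j}G_iG_i^T(A_i^j)^TC_i^T$ (convergent by stability) and applying $(p_3)$ termwise with exponents $(k+j,j)$, plus $(p_0)$ for the lag-zero term, gives $\Lambda_1=\Lambda_2$.

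The obstacle you flag in the necessity direction is, however, not merely delicate --- in the stationary setting that the surrounding text sets up (random initial state with covariance $\Psi_x$ solving \eqref{eq:statpsix}) it is fatal. There the process is stationary, so its finite-dimensional distributions determine only the lag-dependent autocovariance $\Lambda(k)=CA^k\Psi_xC^T+\Psi\delta_{k,0}$; the two-index ``transient build-up'' you propose to exploit is not a functional of the realized process and cannot be recovered from it. And the lag data genuinely underdetermine $(p_0)$ and $(p_3)$: take $A=\left[\begin{smallmatrix}0&1\\0&0\end{smallmatrix}\right]$, $C=[\,1\ \ 0\,]$, $\Psi_1=\Psi_2=0$, $G_1=\left[\begin{smallmatrix}1\\0\end{smallmatrix}\right]$, $G_2=\left[\begin{smallmatrix}0\\1\end{smallmatrix}\right]$; both stable systems output a delayed copy of their driving noise, hence realize the same standard white-noise process, yet $C G_1G_1^TC^T=1\neq 0=CG_2G_2^TC^T$, violating $(p_3)$ at $h=k=0$. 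So necessity fails for the stationary realization, and no refinement of your induction can save it. By contrast, if ``realize the same process'' is read as stochastic equivalence of the nonstationary outputs from the deterministic initial condition $x_i^0=0$, then the two-index covariance \eqref{eq:covyprocess} is exactly the available data and your peeling induction does close: $t=\tau=0$ gives $(p_0)$; $\tau=1$ gives the terms $(k,0)$ of $(p_3)$; at stage $\tau=m$ the first $m-1$ summands are known equal from earlier stages (and transposition), isolating the terms $(k,m-1)$ for $k\ge m-1$, with the remaining ones obtained by transposing earlier identities. You should therefore either adopt that reading explicitly, or add a hypothesis under which the stationary autocovariance determines the individual matrices $C A^aGG^T(A^b)^TC^T$; as written, your proof of necessity does not go through in the setting the proposition appears to intend.
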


As a consequence, if $\Sigma_1$ and $\Sigma_2$ with $\mu_i=\zerob$, $i=1,2$, 
and $\Psi_1=\Psi_2$, are such that $\Sigma_1 \cong_{\ext} \Sigma_2$, 
then they realize the same zero mean Gaussian stochastic process, 
while it is readily seen that the converse implication is not true. 

\section{Conclusions and outlook} \label{sec8}

In this paper we proposed novel definitions of equivalence via stochastic bisimulation and of equivalence of stochastic external behavior for the class of discrete--time stochastic linear control systems with possibly degenerate disturbance distributions.
Necessary and sufficient conditions based on geometric control theory to check these notions were derived and model reduction addressed.  Connections with stochastic reachability and stochastic linear realization theory were also discussed.  \\
In many real world applications, complex systems are given as the composition of several sub--systems. 
In our future work we plant to extend the results presented in this paper to compositional stochastic systems. 
Useful insights in this regard are reported in the last paper by J.C. Willems \cite{Willems13}. 

\bibliographystyle{plain}
\bibliography{biblio1}

\begin{thebibliography}{10}

\bibitem{AbateTAC2011}
A.~Abate, A.~{D'Innocenzo}, and {M.D.} {Di Benedetto}.
\newblock Approximate abstractions of stochastic hybrid systems.
\newblock {\em IEEE Transactions of Automatic Control}, 56(11):2688 -- 2694,
  2011.

\bibitem{AbateAut}
A.~Abate, M.~Prandini, J.~Lygeros, and S.~Sastry.
\newblock Probabilistic reachability and safety for controlled discrete time
  stochastic hybrid systems.
\newblock {\em Automatica}, 44(11):2724 -- 2734, 2008.

\bibitem{DiscAbs}
R.~Alur, {T.A.} Henzinger, G.~Lafferriere, and {G.J.} Pappas.
\newblock Discrete abstractions of hybrid systems.
\newblock {\em Proceedings of the IEEE}, 88:971--984, 2000.

\bibitem{Blute:97}
R.~Blute, J.~Desharnais, A.~Edalat, and R.~Panangaden.
\newblock Bisimulation for labelled markov processes.
\newblock In {\em 12th Annual IEEE Symposium on Logic in Computer Science,
  1997, (LICS '97)}, pages 149--158, 1997.

\bibitem{Breugel03anintrinsic}
F.~Van Breugel, M.~Mislove, J.~Ouaknine, and J.~Worrell.
\newblock An intrinsic characterization of approximate probabilistic
  bisimilarity.
\newblock In {\em In Proceedings of FOSSACS 03, LNCS}, volume 2620, pages
  200--215. Springer, 2003.

\bibitem{BujorianuHSCC2003}
{M.L.} Bujorianu and J.~Lygeros.
\newblock Reachability questions in piecewise deterministic markov processes.
\newblock In O.~Maler and A.~Pnueli, editors, {\em Hybrid Systems: Computation
  and Control}, volume 2623 of {\em Lecture Notes on Control and Information
  Sciences}, pages 126--140. Springer Verlag, Berlin, 2003.

\bibitem{BujorianuHSCC05}
{M.L.} Bujorianu, J.~Lygeros, and {M.C.} Bujorianu.
\newblock Bisimulation for general stochastic hybrid systems.
\newblock In M.~Morari and L.~Thiele, editors, {\em Hybrid Systems: Computation
  and Control}, volume 3414 of {\em Lecture Notes in Computer Science}, pages
  198--214. Springer Verlag, Berlin, 2005.

\bibitem{Caspi2002}
P.~Caspi and A.~Benveniste.
\newblock Toward an approximation theory for computerized control.
\newblock In {\em Embedded Software, LNCS}, volume 2491, pages 294--304, New
  York, 2002. Springer.

\bibitem{ModelChecking}
E.M. Clarke, O.~Grumberg, and D.~Peled.
\newblock {\em Model Checking}.
\newblock MIT Press, 1999.

\bibitem{DeAlvaro2004}
L.~de~Alfaro, M.~Faella, and M.~Stoelinga.
\newblock Linear and branching metrics for quantitative transition systems.
\newblock In {\em International colloquium on automata languages and
  programming, LNCS}, volume 3142, pages 97--109. Springer, 2004.

\bibitem{Desharnais2000}
J.~Desharnais, V.~Gupta, R.~Jagadeesan, and P.~Panangaden.
\newblock Approximating labeled markov proc.
\newblock In {\em 15th Annual IEEE Symposium on Logic in Computer Science},
  pages 95–--106, Osaka, Japan, June 2000.

\bibitem{Desharnais2004}
J.~Desharnais, V.~Gupta, R.~Jagadeesan, and P.~Panangaden.
\newblock Metrics for labelled markov processes.
\newblock {\em Theoretical Computer Science}, 318(3):323--354, June 2004.

\bibitem{DiBenede:HAS2013}
{M.D.} {Di Benedetto} and G.~Pola.
\newblock Networked embedded control systems: from modelling to implementation.
\newblock In G.~Pola L.~Bortolussi, M.L.~Bujorianu, editor, {\em Electronic
  Proceedings in Theoretical Computer Science (EPTCS), Proceedings of the Third
  Workshop on Hybrid Autonomous Systems}, volume 124, pages 9--13, 2014.

\bibitem{Faurre}
P.~Faurre.
\newblock Realisations markoviennes de processus stationaires.
\newblock {\em Report IRIA}, (13), 1973.

\bibitem{AB-TAC07}
A.~Girard and G.J. Pappas.
\newblock Approximation metrics for discrete and continuous systems.
\newblock {\em IEEE Transactions on Automatic Control}, 52(5):782--798, 2007.

\bibitem{GirardEJC11}
A.~Girard and {G.J.} Pappas.
\newblock Approximate bisimulation: a bridge between computer science and
  control theory.
\newblock {\em European Journal of Control}, 17(5--6):568--578, 2011.

\bibitem{HTP02}
E.~Haghverdi, P.~Tabuada, and G.~Pappas.
\newblock Bisimulation relations for dynamical and control systems.
\newblock In Rick Blute and Peter Selinger, editors, {\em Electronic Notes in
  Theoretical Computer Science}, volume~69. Elsevier, 2003.

\bibitem{HermannsBook}
H.~Hermanns.
\newblock {\em Interactive Markov chains: and the quest for quantified
  quality}.
\newblock Springer-Verlag, Berlin, 2002.

\bibitem{JuliusTAC2009}
{A.A.} Julius and {G.J.} Pappas.
\newblock Approximations of stochastic hybrid systems.
\newblock {\em IEEE Transactions of Automatic Control}, 54(6):1193--1203, 2009.

\bibitem{junge1}
O.~Junge.
\newblock A set oriented approach to global optimal control.
\newblock {\em ESAIM: Control, optimisation and calculus of variations},
  10(2):259--270, 2004.

\bibitem{Kailath}
T.~Kailath.
\newblock {\em Linear systems}.
\newblock Prentice-Hall, Englewood Cliffs, N.J, 1980.

\bibitem{Kalman1}
R.E. Kalman.
\newblock Linear stochastic filtering theory - reappraisal and outlook.
\newblock In {\em Proc. Brooklyn Polytechnic Symposium on System Theory}, pages
  197--205, 1965.

\bibitem{Larsen:91}
K.~G. Larsen and A.~Skou.
\newblock Bisimulation through probabilistic testing.
\newblock {\em Information and Computation}, 94(1):1--28, September 1991.

\bibitem{Lunze2001}
J.~Lunze and B.~Nixdorf.
\newblock Representation of hybrid systems by means of stochastic automata.
\newblock {\em Mathematical and Computer Modelling of Dynamical Systems},
  7(4):383--422, 2001.

\bibitem{Milner}
R.~Milner.
\newblock {\em Communication and Concurrency}.
\newblock Prentice Hall, 1989.

\bibitem{Lee2012}
S.~{Mirzazad-Barijough} and {J.-W.} Lee.
\newblock Stability and transient performance of discrete--time piecewise
  affine systems.
\newblock {\em IEEE Transactions of Automatic Control}, 57(4):936--949, 2012.

\bibitem{Park}
D.M.R. Park.
\newblock Concurrency and automata on infinite sequences.
\newblock volume 104 of {\em Lecture Notes in Computer Science}, pages
  167--183, 1981.

\bibitem{DiPierro2003}
A.~Di Pierro, C.~Hankin, and H.~Wiklicky.
\newblock Quantitative relations and approximate process equivalences.
\newblock In {\em Conference on Concurrency Theory, LNCS}, volume 2761, pages
  508--522. Springer, 2003.

\bibitem{PolaPWA}
G.~Pola and {M.D.} {Di Benedetto}.
\newblock Symbolic models and control of discrete-time piecewise affine
  systems: An approximate simulation approach.
\newblock {\em IEEE Transactions of Automatic Control}, 59(1):175--180, January
  2014.

\bibitem{PolaMTNS2006}
G.~Pola, J.~Lygeros, and {M.D.} {Di Benedetto}.
\newblock Symbolic model invariance in stochastic dynamical systems.
\newblock In {\em 17th International symposium on Mathematical Theory of
  Network and Systems}, Kyoto, Japan, July 2006.

\bibitem{PolaCDC16b}
G.~Pola, C.~Manes, and {M.D.}~Di Benedetto.
\newblock On external behavior equivalence of continuous--time stochastic
  linear control systems.
\newblock In {\em 55th IEEE Conference on Decision and Control}, Las Vegas,
  USA, December 2016.
\newblock Submitted.

\bibitem{PolaCDC16a}
G.~Pola, C.~Manes, {A.J.} van~der Schaft, and {M.D.}~Di Benedetto.
\newblock Model reduction of continuous--time stochastic linear control systems
  via bisimulation equivalence.
\newblock In {\em 55th IEEE Conference on Decision and Control}, Las Vegas,
  USA, December 2016.
\newblock Submitted.

\bibitem{PolaCDC2015}
G.~Pola, C.~Manes, {A.J.} van~der Schaft, and {M.D.} {Di Benedetto}.
\newblock On equivalence notions for discrete--time stochastic control systems.
\newblock In {\em 54th IEEE Conference on Decision and Control}, pages
  1180--1185, Osaka, Japan, December 2015.

\bibitem{PolaIJC06}
G.~Pola, {A.J.} van~der Schaft, and {M.D.} {Di Benedetto}.
\newblock Equivalence of switching linear systems by bisimulation.
\newblock {\em International Journal of Control}, 79:74--92, January 2006.

\bibitem{Willems}
J.~W. Polderman and J.C. Willems.
\newblock {\em Introduction to Mathematical Systems Theory: A Behavioral
  Approach}.
\newblock Springer-Verlag, New York, 1997.

\bibitem{gunther}
G.~Rei{\ss}ig.
\newblock Computation of discrete abstractions of arbitrary memory span for
  nonlinear sampled systems.
\newblock {\em in Proc. of 12th Int. Conf. Hybrid Systems: Computation and
  Control (HSCC)}, 5469:306--320, April 2009.

\bibitem{LunzeBook}
J.~Schroder.
\newblock {\em Modelling, State Observation and Diagnosis of Quantised
  Systems}.
\newblock Number 282 in Lecture Notes in Control and Information Sciences.
  Springer, 2003.

\bibitem{StrubbeHSCC05}
S.~Strubbe and {A.J.} van~der Schaft.
\newblock Bisimulation for communicating piecewise deterministic markov
  processes {(CPDPs}).
\newblock In M.~Morari and L.~Thiele, editors, {\em Hybrid Systems: Computation
  and Control}, volume 3414 of {\em Lecture Notes in Computer Science}, pages
  623--639. Springer Verlag, Berlin, 2005.

\bibitem{StrubbeHybridge}
S.~Strubbe and {A.J.} van~der Schaft.
\newblock Communicating piecewise deterministic markov processes.
\newblock In {H.A.P.} Blom and J.~Lygeros, editors, {\em Stochastic Hybrid
  Systems: Theory and Safety Critical Applications}, volume 337 of {\em Lecture
  Notes on Control and Information Sciences}, pages 65--104. Springer Verlag,
  Berlin, 2006.

\bibitem{paulo}
P.~Tabuada.
\newblock {\em Verification and Control of Hybrid Systems: A Symbolic
  Approach}.
\newblock Springer, 2009.

\bibitem{BisimSchaft}
{A.J.} van~der Schaft.
\newblock Equivalence of dynamical systems by bisimulation.
\newblock {\em IEEE Transactions on Automatic Control}, 49(12):2160--2172,
  2004.

\bibitem{shaftrealization}
{A.J.} van~der Schaft and {J.C.} Willems.
\newblock A new procedure for stochastic realization of spectral density
  matrices.
\newblock {\em SIAM Journal on Control and Optimization}, 22(6):845--855,
  November 1984.

\bibitem{Spectrum}
{R.J.} van Glabbeek.
\newblock The linear time--branching time spectrum.
\newblock In {\em CONCUR '90 Theories of Concurrency: Unification and
  Extension}, volume 458 of {\em Lecture Notes in Computer Science}, pages
  278--297. Springer Verlag, 1990.

\bibitem{Willems13}
{J.C.} Willems.
\newblock Open stochastic systems.
\newblock {\em IEEE Transactions of Automatic Control}, 58(2):406--421,
  February 2013.

\bibitem{BeltaTACTN2010}
B.~Yordanov and C.~Belta.
\newblock Formal analysis of discrete-time piecewise affine systems.
\newblock {\em IEEE Transactions of Automatic Control}, 55(12):2834--2840,
  2010.

\bibitem{Belta2012}
B.~Yordanov, J.~Tumova, I.~Cerna, J.~Barnat, and C.~Belta.
\newblock Temporal logic control of discrete-time piecewise affine systems.
\newblock {\em IEEE Transactions of Automatic Control}, 57(6):1491--1504, 2012.

\bibitem{ZamaniSCL2014}
M.~Zamani and A.~Abate.
\newblock Approximately bisimilar symbolic models for randomly switched
  stochastic systems.
\newblock {\em Systems \& Control Letters}, 59(12):3135--3150, 2014.

\bibitem{ZamaniTAC2014}
M.~Zamani, {P.M.} Esfahani, R.~Majumdar, A.~Abate, and J.~Lygeros.
\newblock Symbolic control of stochastic systems via approximately bisimilar
  finite abstractions.
\newblock {\em IEEE Transactions of Automatic Control}, 59(12):3135--3150,
  November 2014.
\newblock Special Issue on Control of Cyber-Physical Systems.

\end{thebibliography}

\end{document}